\newcounter{ctr}
\newcounter{ctr1}
\newcounter{ctr2}
\newcounter{ctr3}
\newtheorem{definition}{Definition}[section] 
\newtheorem{theorem}[definition]{Theorem}
\newenvironment{theorem*}[1]{{\bf Theorem #1} \begin{itshape}}{\end{itshape}}
\newtheorem{lemma}[definition]{Lemma}
\newtheorem{corollary}[definition]{Corollary}
\newenvironment{corollary*}[1]{{\bf Corollary #1} \begin{itshape}}{\end{itshape}}
\newtheorem{proposition}[definition]{Proposition}
\newenvironment{proposition*}[1]{{\bf Proposition #1} \begin{itshape}}{\end{itshape}}
\newtheorem{remark}[definition]{Remark}
\newcommand{\al}{\alpha}
\newcommand{\ud}{\, {\rm d} \kern-.015em }
\newcommand{\modulus}[1]{\left| \kern.05em #1 \kern.05em \right|}
\newcommand{\norm}[1]{\left\| \kern.05em #1 \kern.05em \right\|}
\newcommand{\inner}[1]{\left\langle \kern.05em #1 \kern.05em \right\rangle }
\newcommand{\bm}[1]{\mbox{\protect\boldmath $ #1 $}}
\newcommand{\pick}[2]{\renewcommand{\arraystretch}{0.6}
\left( \kern-.4em \begin{array}{c} #1 \\ #2 \end{array} \kern-.4em \right) }
\newcommand{\DQE}{\text{DQE}}
\newsavebox{\FVerbBox}
\newcommand{\ha}{\mathcal{H}}
\newcommand{\Q}{\mathbb{Q}}
\newcommand{\U}{\mathcal{U}}
\newcommand{\R}{\mathbb{R}}
\newcommand{\N}{\mathbb{N}}
\newcommand{\Z}{\mathbb{Z}}
\newcommand{\bx}{\boldsymbol{x}}
\newcommand{\btau}{\boldsymbol{\tau}}
\newcommand{\bsigma}{\boldsymbol{\sigma}}
\newcommand{\bt}{\boldsymbol{t}}
\newcommand{\m}{\mathcal{M}}
\newcommand{\bp}{\boldsymbol{p}}
\newcommand{\ba}{\boldsymbol{a}}
\newcommand{\Cf}{\mathcal{C}_{\mathbf{f}}}
\newcommand{\Zp}{\mathbb{Z}_{p}}
\newcommand{\Qp}{\mathbb{Q}_{p}}
\newcommand{\W}{\mathcal{W}}
\newcommand{\A}{\mathcal{A}}
\newcommand{\Ap}{\mathfrak{A}}
\newcommand{\Wp}{\mathfrak{W}}
\newcommand{\Sp}{\Wp}
\newcommand{\h}{\mathcal{H}}
\newcommand{\by}{\boldsymbol{y}}
\newcommand{\bal}{\boldsymbol{\alpha}}
\newcommand{\bv}{\mathbf{v}}
\newcommand{\bff}{\mathbf{f}}
\newcommand{\ff}{\mathbf{F}}
\begin{document}

\title{Simultaneous $p$-adic Diophantine approximation}

\author{V. Beresnevich}
\address[V. Beresnevich]{Department of Mathematics, University of York, Heslington, York, YO10
5DD, United Kingdom}
\email{victor.beresnevich@york.ac.uk}

\author{J. Levesley}
\address[J. Levesley]{Department of Mathematics, University of York, Heslington, York, YO10
5DD, United Kingdom}
\email{jason.levesley@york.ac.uk}

\author{B. Ward}
\address[B. Ward]{Department of Mathematics, University of York, Heslington, York, YO10
5DD, United Kingdom}
\email{bw744@york.ac.uk}
\date{\today}


\begin{abstract}
The goal of this paper is to develop the theory of weighted Diophantine approximation of rational numbers to $p$-adic numbers. Firstly, we establish complete analogues of Khintchine's theorem, the Duffin-Schaeffer theorem and the Jarn\'ik-Besicovitch theorem for `weighted' simultaneous Diophantine approximation in the $p$-adic case. Secondly, we obtain a lower bound for the Hausdorff dimension of weighted simultaneously approximable points lying on $p$-adic manifolds. This is valid for very general classes of curves and manifolds and have natural constraints on the exponents of approximation. The key tools we use in our proofs are the Mass Transference Principle, including its recent extension due to Wang and Wu \cite{WW19}, and a Zero-One law for weighted $p$-adic approximations established in this paper.
\end{abstract}

\maketitle

\section{Introduction}
\label{Intro}

One of the central themes in the theory of Diophantine approximation is to understand how rational points simultaneously approximate several given numbers. In this paper we will investigate simultaneous rational approximations to $p$-adic numbers. To begin with, we give a brief overview of relevant results in the real case, which is far better understood. Throughout  $\Psi=(\psi_{1}, \dots , \psi_{n})$ will be an $n$-tuple of approximation functions $\psi_{i}: \R_+ \to \R_{+}$ $(1 \leq i \leq n)$ such that $\psi_i(q) \to 0$ as $ q\to\infty$. Here and elsewhere $\R_+$ is the set of positive real numbers. Given any $\Psi$ as above and $q \in \N$, let
\begin{equation*}
\A_{q}(\Psi)= \bigcup_{\bp=(p_1,\dots,p_n) \in\Z^n} \left\{ \bx=(x_1,\dots,x_n) \in \R^{n} : \left| x_{i}- \frac{p_{i}}{q} \right| < \frac{\psi_{i}(q)}{q}\quad \text{for all } 1 \leq i \leq n \right\}
\end{equation*}
and
\begin{equation*}
\W_{n}(\Psi)= \limsup_{q \to \infty} \A_{q}(\Psi).
\end{equation*}
Thus $\W_{n}(\Psi)$ is the set of points $\bx$ such that there are infinitely many rational points $\bp/q$ that approximate $\bx$ with the error $\psi_i(q)/q$ in the $i$th coordinate.
In the case all approximation functions in $\Psi$ are the same, that is $\psi_{1}= \dots= \psi_{n}$, $\W_{n}(\Psi)$ is the standard set of simultaneously $\psi$-approximable points in $\R^n$, in which case we will write $\W_{n}(\psi)$ for $\W_n(\Psi)$. If the approximation functions have the form $\psi_{i}(q)=q^{-\tau_{i}}$ for some {\em exponents of approximation} $\btau=(\tau_{1}, \dots \tau_{n}) \in \R^{n}_{+}$ we will use the notation $\W_{n}(\btau)$ for $\W_{n}(\Psi)$. Furthermore, if $\btau=(\tau,\dots,\tau)$ for some $\tau>0$ we will write $\W_{n}(\tau)$ for $\W_{n}(\btau)$. Note that $\W_{n}(\Psi)+\ba=\W_{n}(\Psi)$ for any $\ba\in\Z^n$ and therefore $\W_{n}(\Psi)$ is often restricted to $[0,1]^n$ for convenience.

The following is a well know result that was originally proved by Khintchine \cite{K26} in 1926 when $\psi_{1}= \dots= \psi_{n}$ with a slightly more restrictive condition on the approximation function and which can be found in \cite{Harman} in full generality.

\begin{theorem} \label{khintch}
Let $\psi_{i}:\N \to \R_{+}$ be monotonically decreasing functions for each $1 \leq i \leq n$. Then
\begin{equation*}
\lambda_{n}\left(\W_{n}(\Psi)\cap[0,1]^n\right)= \begin{cases}
0  &\text{ if } \, \, \sum_{q=1}^{\infty} \prod_{i=1}^{n}\psi_{i}(q) < \infty, \\
1 & \text{ if } \, \, \sum_{q=1}^{\infty} \prod_{i=1}^{n}\psi_{i}(q) = \infty,
\end{cases}
\end{equation*}
where  $\lambda_{n}$ is $n$-dimensional Lebesgue measure.
\end{theorem}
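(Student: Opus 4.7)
The statement has the familiar Khintchine dichotomy shape, and the strategy is the classical one: Borel--Cantelli for convergence, a quasi-independence estimate plus a zero-one law for divergence. Write $V(q) := \prod_{i=1}^n \psi_i(q)$. Since $\W_n(\Psi) = \bigcap_Q \bigcup_{q \geq Q} \A_q(\Psi)$, the set is periodic modulo $\Z^n$, so it suffices to work inside $[0,1]^n$.

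\emph{Convergence case.} A single denominator $q$ contributes at most $(q+1)^n$ rational points $\bp/q$ in $[0,1]^n$, each giving a box of volume $2^n V(q)/q^n$. Hence $\lambda_n(\A_q(\Psi) \cap [0,1]^n) \leq 4^n V(q)$ (say, for $q \geq 1$). Assuming $\sum_q V(q) < \infty$, the first Borel--Cantelli lemma applied to the sequence of sets $\A_q(\Psi) \cap [0,1]^n$ gives $\lambda_n(\W_n(\Psi) \cap [0,1]^n) = 0$.

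\emph{Divergence case.} Here the key work lies in producing a lower bound for the measure of $\limsup \A_q(\Psi) \cap [0,1]^n$. Since $\psi_i(q) \to 0$, we may assume $\psi_i(q) < 1/2$ for all $q$ and $i$, which yields the matching lower bound $\lambda_n(\A_q(\Psi) \cap [0,1]^n) \geq c_n V(q)$. To apply a divergence Borel--Cantelli (Erd\H{o}s--R\'enyi / Chung--Erd\H{o}s) lemma, one needs the quasi-independence estimate
\begin{equation*}
\sum_{q, q' \leq Q} \lambda_n\!\left(\A_q(\Psi) \cap \A_{q'}(\Psi) \cap [0,1]^n\right) \;\ll\; \Bigl(\sum_{q \leq Q} V(q)\Bigr)^{2} + \sum_{q \leq Q} V(q).
\end{equation*}
The \textbf{main obstacle} is exactly this overlap bound. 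For $q = q'$ the contribution is $\sum V(q)$, which is harmless. For $q \neq q'$, one separates pairs $\bp/q, \bp'/q'$ by their distance: $|p_i/q - p'_i/q'| \geq 1/(qq')$ unless the fractions coincide in the $i$th coordinate. A coordinate-wise case analysis, combined with monotonicity of the $\psi_i$ (which yields $\psi_i(q') \leq \psi_i(q)$ when $q \leq q'$ and hence controls $V(q') \leq V(q)\prod_i (\psi_i(q')/\psi_i(q))^{0}$ in the needed regime), shows that the contribution of coincident-coordinate pairs is absorbed into $\sum V(q)$, while the contribution of fully separated pairs is bounded by a product of one-dimensional sums and is therefore $\ll (\sum_{q \leq Q} V(q))^2$. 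This is the weighted analogue of the Cassels--LeVeque overlap estimate and is the technical heart of the argument.

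\emph{From positive measure to full measure.} Once divergence Borel--Cantelli delivers $\lambda_n(\W_n(\Psi) \cap [0,1]^n) > 0$, I would conclude by invoking Gallagher's zero-one law, which asserts that $\lambda_n(\W_n(\Psi) \cap [0,1]^n) \in \{0,1\}$; the hypotheses of Gallagher's result are satisfied because $\W_n(\Psi)$ is invariant under integer translations and is a $\limsup$ of sets defined by rational fractions. This forces the measure to be $1$ and completes the divergence half.
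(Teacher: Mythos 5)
The paper does not prove Theorem~\ref{khintch}: it is quoted as classical background, with Khintchine \cite{K26} credited for the equal-weight case and Harman \cite{Harman} cited for the weighted generality, so there is no in-paper argument to compare against. What you \emph{can} compare against is the paper's proof of the $p$-adic analogue, Theorem~\ref{wkpadic}, in Section~5: that proof follows exactly the scheme you propose — Borel--Cantelli for convergence, an overlap estimate fed into the Kochen--Stone divergence lemma (Lemma~\ref{borel_cantelli_divergence}), and a zero-one law (Lemma~\ref{haynes_0-1}) to promote positive measure to full measure. Your convergence half is complete and correct, and invoking Gallagher's zero-one law in place of the Haynes-type argument is the right real-variable substitute.

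The genuine gap is the one you yourself flag and then do not close: the quasi-independence estimate. As written it is also slightly miscast. The phrase ``bounded by a product of one-dimensional sums'' is not quite right, since $\sum_{q,q'}\prod_i(\cdot)$ does not commute with $\prod_i\sum_{q,q'}(\cdot)$; what is true is that for each \emph{fixed} pair $(q,q')$ the set $\A_q(\Psi)\cap[0,1]^n$ is a product over coordinates, so $\lambda_n(\A_q\cap\A_{q'}\cap[0,1]^n)$ factorises as $\prod_i \lambda_1\bigl(\A_{q,i}\cap\A_{q',i}\bigr)$, and one must then estimate each factor. The crux is that the one-dimensional overlap is \emph{not} simply $\ll \psi_i(q)\psi_i(q')$. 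For $q<q'$ there is an extra contribution from pairs $p_i/q = p_i'/q'$: there are about $\gcd(q,q')$ of these, each contributing an overlap of length $\asymp \psi_i(q')/q'$, and the resulting term $\gcd(q,q')\psi_i(q')/q'$ can dominate $\psi_i(q)\psi_i(q')$ for a dense set of pairs $(q,q')$. It is instructive that the paper's own claim~(c) in Section~5 sidesteps precisely this issue: there the target is $\Wp_n'(\Psi)$, defined with reduced fractions, and the argument shows the coincidence case $k=0$ is impossible, so no such term arises. In the unreduced formulation of Theorem~\ref{khintch} the coincidences are genuinely present, and bounding their total contribution to $\sum_{q,q'\le Q}\lambda_n(\A_q\cap\A_{q'})$ requires a real argument combining monotonicity of the $\psi_i$ with divisor-sum estimates. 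Your proposal asserts this is ``absorbed into $\sum V(q)$'' without showing why, and also contains a vacuous placeholder ($\prod_i(\psi_i(q')/\psi_i(q))^0$) where the monotonicity should actually be used; until this is carried out, the divergence half is an outline rather than a proof.
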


To gather more precise information about the sets of measure zero arising from Theorem~\ref{khintch} one often appeals to Hausdorff measures and Hausdorff dimension. We now briefly recall these notions.
Let  $(X,d)$ be a separable metric space and suppose that $U \subseteq X$. For any $\rho>0$, a $\rho$-cover of $U$ is a countable collection of balls $\{B_{i}\}$  of radii $ r_i > 0$ such that $U \subset \cup_{i}B_{i}$ and $r_i \leq \rho$ for all $i$. A dimension function $f : \R_{+} \to \R_{+}$ is an increasing continuous function for which $f(r) \to 0$ as $r \to 0$. Define
\begin{equation*}
\ha^{f}_{\rho}(U)= \inf \left\{ \sum_{i}f(r_i): \{ B_{i} \} \, \, \text{ is a $\rho$-cover of } U \right\},
\end{equation*}
where the infimum is taken over all $\rho$-covers of $U$.  The Hausdorff $f$-measure, $\ha^{f}(U)$, of $U$ is defined as
\begin{equation*}
\ha^{f}(U)=\lim_{\rho \to 0^{+}} \ha^{f}_{\rho}(U).
\end{equation*}
When the dimension function $f(x)=x^{s}$ we will write $\ha^{s}$ for $\ha^{f}$. The Hausdorff dimension of $U$ is defined as
\begin{equation*}
\dim U = \inf \{ s\geq 0: \ha^{s}(U)= 0 \}.
\end{equation*}
Regarding the set $\W_{n}(\btau)$ Rynne proved the following general statement \cite{R98}.

\begin{theorem} \label{rynne}
Let $\btau=(\tau_{1}, \dots, \tau_{n}) \in \R^{n}_{+}$ be such that $\tau_1\ge \dots \ge \tau_n$ and $\sum_{i=1}^{n} \tau_{i} \geq 1$. Then
\begin{equation*}
\dim \W_{n}(\btau)= \min_{1 \leq k \leq n} \left\{ \frac{n+1+\sum_{i=k}^{n}(\tau_{k}-\tau_{i})}{\tau_{k}+1} \right\}.
\end{equation*}
\end{theorem}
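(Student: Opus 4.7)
The plan is to prove $\dim\W_n(\btau)=s^{*}$, where
$s_k:=\dfrac{n+1+\sum_{i=k}^{n}(\tau_k-\tau_i)}{\tau_k+1}$ and $s^{*}:=\min_{1\leq k\leq n}s_k$. The hypothesis $\sum_i\tau_i\geq 1$ is exactly what forces $s^{*}\leq n$, making the formula consistent with the trivial bound $\dim\W_n(\btau)\leq n$.

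\textbf{Upper bound.} Fix $k\in\{1,\dots,n\}$. For each $q\geq 1$, $\A_q(\btau)\cap[0,1]^n$ is a union of at most $(q+1)^n$ axis-parallel rectangles of half-sides $q^{-\tau_i-1}$. Since $\tau_1\geq\cdots\geq\tau_n$, each such rectangle can be covered by $O\!\bigl(q^{\sum_{i>k}(\tau_k-\tau_i)}\bigr)$ Euclidean balls of radius $q^{-\tau_k-1}$: one ball suffices in each ``thin'' direction $i\leq k$, while $\lceil q^{\tau_k-\tau_i}\rceil$ balls are needed in each ``thick'' direction $i>k$. Summing the $s$-cost gives
\[
\sum_{q=1}^{\infty}(q+1)^n\,q^{\sum_{i=k}^{n}(\tau_k-\tau_i)}\,q^{-s(\tau_k+1)}<\infty\qquad\text{whenever } s>s_k,
\]
so the Hausdorff--Cantelli lemma applied to $\W_n(\btau)=\limsup_{q\to\infty}\A_q(\btau)$ yields $\ha^{s}(\W_n(\btau)\cap[0,1]^n)=0$. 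Hence $\dim\W_n(\btau)\leq s_k$, and minimising over $k$ gives $\dim\W_n(\btau)\leq s^{*}$.

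\textbf{Lower bound.} For the matching lower bound I would invoke the rectangle-to-rectangle Mass Transference Principle of Wang and Wu~\cite{WW19}. First, pick an auxiliary weight $\bt^{*}=(t_1^{*},\dots,t_n^{*})$ with $0\leq t_i^{*}\leq\tau_i$ and $\sum_i t_i^{*}=1$, which is possible precisely because $\sum_i\tau_i\geq 1$; by Theorem~\ref{khintch}, $\W_n(\bt^{*})\cap[0,1]^n$ has full Lebesgue measure. Next, observe that $\A_q(\btau)$ is obtained from $\A_q(\bt^{*})$ by contracting the $i$-th side of each defining rectangle by the factor $q^{-(\tau_i-t_i^{*})}$. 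The Wang--Wu MTP converts this full-measure input into a lower bound for $\dim\W_n(\btau)$ expressed as a minimum over $k\in\{1,\dots,n\}$ of ratios built from the logarithms of the contraction factors. Finally, optimising over admissible $\bt^{*}$ matches this bound with $s^{*}$, with the minimising index $k$ picking out the critical coordinate direction at the optimal scale.

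\textbf{Main obstacle.} The upper bound is a routine cover-and-count exercise; the real work is in the lower bound. Two delicate points arise there: (i) verifying the technical hypotheses of the Wang--Wu MTP for the particular family of rectangles, namely a weighted ubiquity/uniform-distribution property of rationals at the base scale, and (ii) carrying out the combinatorial optimisation that reconciles the MTP dimension formula with the minimum in the theorem statement. A classical alternative, used in Rynne's original paper, replaces (ii) with an explicit Cantor-type construction on $[0,1]^n$ realising each candidate exponent $s_k$ via a regular-system argument; this avoids the MTP but demands careful book-keeping of the spacing of the rationals.
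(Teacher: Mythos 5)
The paper does not give a proof of this theorem; it is stated as background and cited directly from Rynne \cite{R98}. So there is no in-paper proof to compare against, but the paper \emph{does} prove the $p$-adic analogue (Theorem~\ref{weighted_jb}) by essentially the route you sketch, so a meaningful comparison is possible.

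Your upper bound is correct and complete: covering each box of $\A_q(\btau)$ by $O\bigl(q^{\sum_{i>k}(\tau_k-\tau_i)}\bigr)$ balls of radius $q^{-\tau_k-1}$ and summing gives $\ha^{s}(\W_n(\btau)\cap[0,1]^n)=0$ for every $s>s_k$, hence $\dim\le s^*$. This is the same cover-and-count argument the paper uses for the upper bound of Theorem~\ref{weighted_jb}.

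Your lower-bound strategy is also the right one, and it matches the paper's approach for the $p$-adic version: produce a full-measure weighted Dirichlet/Khintchine statement at the critical exponent, feed it into the Wang--Wu rectangles-to-rectangles MTP (Theorems~\ref{MTPRR} or \ref{MTPRR_full_measure}), then optimise over the auxiliary exponent vector. However, what you leave as a one-line claim (``optimising over admissible $\bt^{*}$ matches this bound with $s^{*}$'') is exactly where the real work lies. The paper does not treat this as a routine optimisation: in the $p$-adic proof it constructs the auxiliary vector $\bal$ by an explicit recursion
\[
\al_i=\min\left\{\tau_i,\ \frac{n+1-\sum_{j=n-i+1}^n\al_j}{n-i}\right\},
\]
identifies the pivot index $k$ at which the recursion saturates, and then runs a three-case analysis over the possible values $A_i\in\{\al_i,\al_i+t_i\}$ to show the Wang--Wu minimum collapses to $\min_i\frac{n+1+\sum_{\tau_j<\tau_i}(\tau_i-\tau_j)}{\tau_i}$. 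For the real statement you would need the exactly analogous recursion for $a_i=t_i^*+1$ with $\sum_i t_i^*=1$, together with the same case-split, to recover $\min_k\frac{n+1+\sum_{i\ge k}(\tau_k-\tau_i)}{\tau_k+1}$. Until that computation is written down, the lower bound is an outline rather than a proof. You also correctly note the two technical prerequisites (Ahlfors regularity and the $k$-scaling property, or alternatively the ubiquity hypothesis), which in this setting are straightforward but still need to be stated; the paper's $p$-adic Proposition~\ref{ubiquity_rectangle} is the model to follow if you want the stronger Hausdorff-measure conclusion rather than just the dimension.

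Finally, a small but worth-noting point: the consistency check $s^{*}\le n$ under $\sum_i\tau_i\ge 1$ comes from $s_1\le n$, not from every $s_k$; phrasing it as you did (``forces $s^{*}\le n$'') is fine, but you should be aware it is $s_1$ that carries the constraint.
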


Note that the condition $\sum_{i=1}^{n} \tau_{i} \geq 1$ on the exponents is optimal since, by Dirichlet's theorem, $\W_{n}(\btau)=\R^n$ whenever $\sum_{i=1}^{n} \tau_{i} \leq 1$.
The version of Theorem~\ref{rynne} with $ \tau_1 = \dots = \tau_n$ is a classical result independently established by Jarn\'ik \cite{J29} and Besicovitch \cite{B34}. Furthermore, Jarn\'ik proved a stronger Hausdorff measure result \cite{J29}.

When the coordinates of $\bx$ are confined by some functional relations, that is they are dependent, we fall into the area of Diophantine approximation on manifolds, see \cite{BDV07, BRV16, BD99, KM98, S80} for a general introduction. In this context, generalisations of Khintchine's theorem and the Jarn\'ik-Besicovitch theorem to submanifolds of $\R^n$ have been studied in great depth. Indeed, the theory for non-degenerate planar curves was essentially completed in \cite{BDV07,MR2285737,VV06}, also see \cite{MR2347267,BZ10,MR2777039,MR3318157,MR4092233} for subsequent developments. Note that if a planar curve is non-degenerate everywhere it means that the curvature of the curve is defined and non-zero everywhere except possibly at a countable number of points. In relation to manifolds in higher dimensions non-degeneracy implies that the manifolds are sufficiently curved so as to deviate from any hyperplane, see \cite{KM98} for a formal definition. The divergence case of Khintchine's theorem was obtained for arbitrary analytic non-degenerate manifolds in \cite{B12} and for arbitrary (not necessarily analytic) non-degenerate curves \cite{BVVZ}. The convergence case was obtained for various classes of manifolds, see \cite{BVVZ17, HL18, H20, R15, S18}
and references within, before being fully proven in \cite{BY21}. Note that the fact that the points $\bx$ lie on a submanifold of $\R^n$ implies that approximating rational points have to lie close to the manifold. Note that in \cite{BY21} a count on the number of rational points lying close to non-degenerate manifolds was established with the error term estimating the measure of $\psi$-approximable points as opposed to counting rational points that contribute to the error term.


In relation to Hausdorff dimension, the Jarn\'ik-Besicovitch theorem with weights was obtained in \cite{MR2285737} for non-degenerate planar curves. This reads as follows.

\begin{theorem} \label{euclid:curves}
Let $f \in C^{(3)}(I_{0})$, where $I_{0} \in \R$ is an interval, and $\Cf:=\{ (x, f(x)): x \in I_{0} \}$. Let $\btau=(\tau_{1},\tau_{2})$ be an exponent vector with
\begin{equation*}
0 < \min \{ \tau_{1},\tau_{2}\}<1 \, \text{ and } \, \tau_{1}+\tau_{2} \geq 1,
\end{equation*}
and assume that
\begin{equation} \label{curve_condition}
\dim \left\{ x \in I_{0}: f''(x)=0 \right\} \leq \frac{2- \min \{ \tau_{1}, \tau_{2} \}}{1+\max \{ \tau_{1},\tau_{2} \}}.
\end{equation}
Then
\begin{equation*}
\dim \W_{2}(\btau) \cap \Cf = \frac{2- \min \{ \tau_{1}, \tau_{2} \}}{1+\max \{ \tau_{1},\tau_{2} \}}.
\end{equation*}
\end{theorem}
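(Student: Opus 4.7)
The plan is to prove the upper and lower bounds separately. Set $\tau := \min\{\tau_1,\tau_2\}$, $T := \max\{\tau_1,\tau_2\}$ and $s_0 := (2-\tau)/(1+T)$.

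\textbf{Upper bound.} Decompose $I_0 = I_0^\flat \cup I_0^\sharp$, where $I_0^\flat := \overline{\{x\in I_0:f''(x)=0\}}$. Since $f\in C^{(3)}$ and $I_0$ is bounded, the map $x\mapsto(x,f(x))$ is bi-Lipschitz, so by hypothesis~\eqref{curve_condition} the part of $\W_{2}(\btau)\cap\Cf$ lying over $I_0^\flat$ has dimension at most $s_0$. On any compact $I\subset I_0^\sharp$, $|f''|$ is bounded below, so a Huxley--Vaughan style counting bound for rationals near non-degenerate curves (cf.\ \cite{VV06,MR2285737}) yields
\begin{equation*}
\#\bigl\{(p_1,p_2)\in\Z^2 : p_1/q\in I,\ \|qf(p_1/q)\|<cq^{-\tau}\bigr\} \ll q^{1-\tau}.
\end{equation*}
Any $(x,f(x))\in\A_{q}(\Psi)\cap\Cf$ with $x\in I$ forces $\bp/q$ to satisfy this inequality (for a uniform $c$), and combining the two coordinate conditions with the boundedness of $|f'|$ confines $x$ to an interval of length $\asymp q^{-1-T}$ about $p_1/q$. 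The Hausdorff $s$-sum
\begin{equation*}
\sum_{q=1}^\infty q^{1-\tau}\cdot q^{-s(1+T)} < \infty \quad\iff\quad s > s_0,
\end{equation*}
gives $\dim\W_{2}(\btau)\cap\Cf\le s_0$.

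\textbf{Lower bound.} Restrict to $I_0^\sharp$ and invoke the divergence case of the Khintchine theorem for non-degenerate planar curves (e.g.\ \cite{BDV07}): choosing a monotonic $\psi$ with $\sum\psi(q)^2=\infty$, for Lebesgue-almost every $x\in I_0^\sharp$ there exist infinitely many $\bp/q$ with $|x-p_1/q|<\psi(q)/q$ and $|f(x)-p_2/q|<\psi(q)/q$. Projecting onto the parameter $x$ produces a full-measure $\limsup$ set of balls around rationals $p_1/q$ whose companion $p_2/q$ is within $\psi(q)/q$ of the curve. Now apply the Mass Transference Principle for rectangles of Wang--Wu \cite{WW19}, replacing each approximation ball by the weighted rectangle $\{\bp/q\}+(-q^{-1-\tau_1},q^{-1-\tau_1})\times(-q^{-1-\tau_2},q^{-1-\tau_2})$ associated with $\btau$. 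Matching exponents (so that the sides $q^{-1-\tau_1},q^{-1-\tau_2}$ reproduce the scaling driving the Hausdorff content) yields $\ha^{s_0}(\W_{2}(\btau)\cap\Cf)=\infty$, hence $\dim\W_{2}(\btau)\cap\Cf\ge s_0$.

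\textbf{Main obstacle.} The delicate step is the lower bound: one must verify that the Khintchine system on the curve satisfies the regularity (doubling, local ubiquity, correct nesting of rectangles into balls) required by the Wang--Wu MTP, and that the rectangular aspect ratio $(1+\tau_1):(1+\tau_2)$ matches the dimension $s_0$. The curvature condition~\eqref{curve_condition} is precisely what caps the contribution from the inflection set, where both the counting bound and the planar-curve Khintchine input degenerate, and the hypothesis $\tau_1+\tau_2\ge 1$ ensures $s_0<1$ so that MTP gives non-trivial information (we are outside the Dirichlet regime in which $\W_{2}(\btau)\cap\Cf$ would simply be the whole curve).
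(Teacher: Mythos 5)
This is a background theorem that the paper does not itself prove: it is quoted verbatim from Beresnevich--Velani \cite{MR2285737}, and the paper only remarks on the strategies used there and in later work. So the relevant comparison is between your proposal and the sketch the paper gives of the known proofs.

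Your upper bound matches the paper's description exactly: the original proof in \cite{MR2285737} rests on Huxley's count \eqref{hux} (later sharpened by Vaughan--Velani \cite{VV06}), together with splitting the interval along the inflection set, which is where hypothesis \eqref{curve_condition} enters. Nothing to flag there beyond your usual obligation to pin down the dependence of the implicit constant on the compact subinterval $I\subset I_0^\sharp$.

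For the lower bound, however, you are reaching for the wrong Mass Transference tool. The original proof of \cite{MR2285737} (and \cite{BDV07,BZ10}, which it relies on) uses the \emph{ubiquity} framework directly. The MTP route was pointed out later in \cite{BLVV17}, but note that even for unequal weights the MTP is applied in the \emph{parameter interval} $I_0\subset\R$, not in $\R^2$. After projecting the constraints $|x-p_1/q|<q^{-1-\tau_1}$ and $|f(x)-p_2/q|<q^{-1-\tau_2}$ to $I_0$ (using the mean value theorem and the boundedness of $f'$ on $I$), a rectangle of aspect ratio $q^{-1-\tau_1}\times q^{-1-\tau_2}$ cuts the graph in an arc whose projection is an interval of length $\asymp q^{-1-\max\{\tau_1,\tau_2\}}$. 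In one dimension there are no rectangles to speak of, so the correct tool is the classical ball MTP of \cite{BV06} (Theorem~\ref{general_MTP}), not the Wang--Wu rectangle version. Your phrasing --- ``replacing each approximation ball by the weighted rectangle $\dots$ in $\R^2$'' --- would, if carried out literally, give a Hausdorff content statement for a limsup set in $\R^2$, which does not translate into a statement about the one-dimensional set $\W_2(\btau)\cap\Cf$. The Wang--Wu rectangle MTP genuinely becomes necessary only when the manifold has parameter dimension $d\ge2$ and the exponents over the independent variables differ (this is exactly the role it plays in the paper's Theorem~\ref{lowerbnd_manifold}); for planar curves, $d=1$ and the shrunken sets are intervals. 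Finally, the input to the MTP should be chosen as a Dirichlet-type full-measure statement rather than the full Khintchine divergence theorem on curves: as \cite{BLVV17} observes, this weaker input already suffices and is far easier to verify for general $C^2$ curves, which is precisely why the MTP proof generalises beyond the original ubiquity argument.
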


In establishing the upper bound of Theorem~\ref{euclid:curves}, an estimate due to Huxley \cite{H96} is used as the key ingredient of the proof. To be more specific, Huxley's estimate proves that for a twice continuously differentiable function $f:\R \to \R$ defined on some interval $I \subset \R$ where $f''$ is bounded away from zero and any $\epsilon>0$ for all sufficiently large $Q \in \N$ one has that
\begin{equation} \label{hux}
\#\left\{\left( \frac{p_{1}}{q},\frac{p_{2}}{q} \right) \in \Q^{2}: \begin{array}{c}
q \leq Q, \, \, \frac{p_{1}}{q} \in I, \\
\left| f \left( \frac{p_{1}}{q} \right)- \frac{p_{2}}{q} \right| < \psi(Q)/Q
\end{array} \right\} \leq \psi(Q)Q^{2+ \epsilon},
\end{equation}
provided $q \psi(q) \to \infty$ as $q \to \infty$. Later Vaughan and Velani \cite{VV06} replaced the $Q^\epsilon$ term from Huxley's estimate by a constant, thus obtaining the best possible (up to that constant) estimate. Also note that a matching lower bound was obtained in \cite{BDV07} for $C^3$ functions $f$ and this was later extended to a class of $C^1$ functions \cite{BZ10}. Furthermore, \cite{BDV07} and \cite{BZ10} establish the ubiquity property of rational points near the planar curves in question, which lies at the heart of the proof of the lower bound in Theorem~\ref{euclid:curves}.

However, as was later discovered in \cite{BLVV17} in the case of equal weights, the lower bound in Theorem~\ref{euclid:curves} and indeed for $C^2$ manifolds in arbitrary dimensions can be proven using the Mass Transference Principle (MTP) of \cite{BV06} that has become a standard part of the toolkit when attacking such problems. In fact, in this paper we will too utilise the MTP, or rather its more recent versions. Regarding approximations with weights, the following theorem was established in \cite{BLW20}.

\begin{theorem} [See \cite{BLW20}]\label{BLW}
Let $\m := \left\{ (\bx, f(\bx)) : \bx \in \U \subset \R^{d} \right\}$ where $f: \U \rightarrow \R^{m}$ with $ f \in C^{(2)}$. Let $\btau = (\tau_{1}, \dots, \tau_{n}) \in \R^{n}_{+}$ with
\begin{equation*}
\tau_{1} \geq \dots \geq \tau_{d} \geq \max_{d+1 \leq i \leq n} \left\{ \tau_{d+1}, \frac{1-\sum_{i=1}^{m}\tau_{d+i}}{d} \right\} \quad \text{and} \quad \sum_{i=1}^{m}\tau_{d+i}<1.
\end{equation*}
Then
\begin{equation*}
\dim \left( \W_{n}(\btau) \cap \m \right) \geq \underset{1 \leq j \leq d}{\min} \left\{ \frac{n+1 + \sum_{i=j}^{n}(\tau_{j}-\tau_{i})}{\tau_{j}+1} - m \right\}.
\end{equation*}
\end{theorem}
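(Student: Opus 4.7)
The plan is to apply the rectangles-to-rectangles Mass Transference Principle of Wang--Wu \cite{WW19} to a suitable auxiliary limsup set of full measure on $\m$, after projecting the problem onto the base $\U\subset\R^d$ via the $C^{(2)}$ parametrisation $\bx\mapsto(\bx,f(\bx))$. The first step is to choose a critical auxiliary weight vector. Set
\[
\alpha_i \;=\; \frac{1-\sum_{j=1}^m\tau_{d+j}}{d} \quad(1\le i\le d), \qquad \alpha_{d+j}\;=\;\tau_{d+j}\quad (1\le j\le m),
\]
so that $\sum_{i=1}^n\alpha_i=1$. The standing hypothesis $\sum_{j=1}^m\tau_{d+j}<1$ gives $\alpha_i>0$, and the assumption $\tau_d\ge\max\{\tau_{d+1},(1-\sum\tau_{d+j})/d\}$ together with $\tau_1\ge\dots\ge\tau_d$ yields $\alpha_i\le\tau_i$ for every $i$. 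Minkowski's theorem on linear forms (weighted Dirichlet) then ensures every $(\bx,f(\bx))\in\m$ is $\bal$-approximable, so $\W_n(\bal)\cap\m=\m$ provides the base full-measure set on $\m$.

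The second step is the descent to $\U$. The projection $\pi\colon\m\to\U$, $(\bx,f(\bx))\mapsto\bx$, is bi-Lipschitz on compact subsets and hence preserves Hausdorff dimension. For any $\bp/q\in\Q^n$ whose last $m$ coordinates satisfy $|p_{d+j}/q-f_j(\bp_d/q)|=O(q^{-1-\tau_{d+j}})$, where $\bp_d=(p_1,\dots,p_d)$, Taylor expansion of $f$ about $\bp_d/q$ (using $f\in C^{(2)}$) linearises the normal approximation conditions, so that the set of admissible $\bx\in\U$ is, up to bounded constants, a $d$-dimensional axis-parallel rectangle of sides $\asymp q^{-1-\tau_i}$ ($1\le i\le d$). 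Running the same construction with $\bal$ in place of $\btau$ extracts from the first step a full-measure limsup in $\U$ of $d$-dimensional rectangles of sides $\asymp q^{-1-\alpha_i}$.

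The third step is the MTP itself. Since $\alpha_i\le\tau_i$ for $i\le d$, the Wang--Wu MTP in $\R^d$ validly contracts the $\bal$-rectangles to the $\btau$-rectangles, and after routine algebraic simplification---using $n=d+m$, $\sum\alpha_i=1$, and the monotonicity $\tau_j\ge\tau_{d+j'}$ guaranteed by the hypothesis---its output reduces to the claimed lower bound
\[
\min_{1\le j\le d}\,\frac{n+1+\sum_{i=j}^n(\tau_j-\tau_i)}{\tau_j+1} \;-\; m.
\]

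The main obstacle is the second step: the slicing of the manifold into rectangles must be uniform in $q$, and the approximating centres must be distributed densely enough in $\U$ to produce a genuine full-measure limsup rather than merely an overlapping cover by thin tubes. The precise weight inequalities in the hypothesis are exactly tuned to make this work: they place $\bal$ componentwise below $\btau$ while forcing $\sum\alpha_i=1$, the first property enabling the contractive application of the Wang--Wu MTP and the second supplying the base full-measure set via weighted Dirichlet.
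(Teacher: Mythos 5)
Your high-level plan --- pick an auxiliary weight vector $\bal$ summing to the Dirichlet critical value, establish a full-measure base limsup, then contract with the Wang--Wu rectangles-to-rectangles Mass Transference Principle --- is indeed the route taken both in \cite{BLW20} and in the present paper's $p$-adic analogue (Theorem~\ref{lowerbnd_manifold}). Your step three, evaluating the Wang--Wu exponent with the uniform choice $\alpha_i=(1-\sum_{j}\tau_{d+j})/d$ for $i\le d$, does in fact simplify to the claimed bound (this is more than routine algebra, but it works). The gap is in your step two.

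Plain weighted Dirichlet applied to the forms $qx_i-p_i$, $1\le i\le n$, gives $\W_n(\bal)=\R^n$ and hence $\W_n(\bal)\cap\m=\m$. But what the MTP actually needs is a full-measure limsup in $\U\subset\R^d$ of rectangles whose centres $\bp_d/q$ are already \emph{near the manifold}, i.e.\ satisfy $\left|f_j(\bp_d/q)-p_{d+j}/q\right|\ll q^{-1-\tau_{d+j}}$. You propose to deduce this from the Dirichlet approximations by Taylor expanding $f_j$, but this fails: the linear Taylor term is of size $O\!\left(q^{-1-\min_{i\le d}\alpha_i}\right)$, and with your choice of $\bal$ the exponent $\min_{i\le d}\alpha_i$ can be strictly smaller than $\max_j\tau_{d+j}$. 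For example $d=2$, $m=1$, $\tau_1=\tau_2=1$, $\tau_3=\tfrac12$ satisfies the hypotheses, yet $\alpha_1=\alpha_2=\tfrac14<\tfrac12=\tau_3$, so the linear error $q^{-5/4}$ swamps the required $q^{-3/2}$. No redistribution of the $\alpha_i$ over $i\le d$ can repair this, since their average is forced to be $(1-\sum_j\tau_{d+j})/d$.

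The remedy is to bake the derivative into the Minkowski application, as the paper does in the $p$-adic Theorem~\ref{diri}: apply Minkowski to the forms $qx_i-p_i$ ($i\le d$) together with the derivative-corrected forms
\[
q\,f_j(\bx)\;-\;\sum_{i=1}^{d}\frac{\partial f_j}{\partial x_i}(\bx)\,(qx_i-p_i)\;-\;p_{d+j}\qquad(1\le j\le m).
\]
With this choice the linear Taylor term cancels exactly, and the remaining quadratic error $O\!\left(q^{-2(1+\min_i\alpha_i)}\right)$ is dominated by $q^{-1-\tau_{d+j}}$ under the stated hypotheses (here $\tau_{d+j}<1$ makes the inequality automatic). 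This derivative-corrected Dirichlet theorem on the manifold, together with a zero--one type argument to upgrade ``every point has infinitely many approximations'' to ``the limsup of scaled-down rectangles still has full measure'' (the role of Lemma~\ref{measure_unchanged_rectangles} / Corollary~\ref{coldiri+} in the $p$-adic case), is exactly the content you flag as ``the main obstacle'' but do not supply. As written, the argument from $\W_n(\bal)\cap\m=\m$ to the full-measure rectangle limsup does not go through.
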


In dimensions $n>2$ the complementary upper bound is known only in the case of equal exponents $\tau_1=\dots=\tau_n=\tau$. Furthermore, there are various constrains on the manifolds and the exponent $\tau$, see \cite{BVVZ17}, \cite{HL18}, \cite{H20}, \cite{S18} and \cite{BY21}. The difficulty in obtaining the upper bounds is primarily associated with the difficulty to count rational points lying close to a manifold.

Note that unlike Theorems~\ref{euclid:curves} and \ref{BLW} and other results for manifolds, Rynne's theorem does not have any constrains on $\btau$ except the absolutely necessary requirement that $\sum_{i=1}^{n} \tau_{i} \geq 1$. In particular, the exponents of approximation in Theorems~\ref{euclid:curves} and \ref{BLW} have to obey certain upper and lower bounds. The case of larger exponents has also been investigated, albeit it relies on studying rational points on manifolds. For example, \cite{MR2604984} computes the Hausdorff dimension of $\W_n(\tau)\cap C$ for polynomial curves $C$ defined over $\Q$ for $\tau\ge \max(\deg(C)-1,1)$.
Further generalisations of \cite{MR2604984} can be found in \cite{MR4089038,MR3450571,MR3708521} where the condition on $\tau$ was relaxed.

The main goal of this paper is to kick-start a similar theory in the case of Diophantine approximations to $p$-adic variables. Specifically, we establish $p$-adic analogues of Theorems~\ref{khintch}, ~\ref{rynne}, and ~\ref{BLW}.

\section{$p$-adic approximations: overview and new results} \label{p-adic}

As mentioned above, this paper is primarily concerned with establishing the $p$-adic versions of the results mentioned in \S\ref{Intro}. For a general introduction  to the theory of $p$-adic numbers and their functions we refer the reader to \cite{G20, M73, S06}. Throughout the rest of this paper $ p \in \N$ will be a fixed prime number and
$\Q_p$ will stand for the completion of $\Q$ with respect to the $p$-adic absolute value
$| \cdot |_p : \Q \to [0, \infty)$, where
\[
	| x |_p = \begin{cases}
				p^{- \nu_p (x)} & \text{ if } x \not= 0, \\
				0 & \text{ if } x = 0,
		        \end{cases}
\]
and
$ \nu_p( x)$ is the unique integer $\ell$ such that $ x = p^{\ell}(a/b) $, $ a, b \in \Z$ and $ \gcd(a, p) = \gcd(b,p) = 1$. Let $\Z_p:=\{x\in\Q_p:|x|_p\le1\}$ be the set of $p$-adic integers and let $\mu_p$ denote the (uniquely defined translation invariant) Haar measure on $\Q_p$ normalised so that $\mu_p( \Zp) = 1$. When considering the space $\Q_p^n$, we will denote the corresponding product measure by $\mu_{p, n}$. Thus, $\mu_{p,n}(\Zp^n)=1$.

The real setting described in \S\ref{Intro} moves readily enough into the $p$-adic setting.
Let $\Psi=(\psi_1,\dots,\psi_n)$ be as in \S\ref{Intro} and for any $a_{0} \in \N$ let
\begin{equation}\label{Ap}
\Ap_{a_{0}}(\Psi)= \bigcup_{\substack{(a_1,\dots,a_n) \in\Z^n\\ |a_{i}| \leq a_{0}\; (1\le i\le n)}} \left\{ \bx=(x_1,\dots,x_n) \in \Zp^{n}: \left| x_{i} - \frac{a_{i}}{a_{0}} \right|_{p} < \psi_{i}(a_{0})\;\;\text{for all } 1 \leq i \leq n \right\}\,.
\end{equation}
Define the set of $p$-adic simultaneously $\Psi$-approximable points in $\Z_p$ as
\begin{equation*}
\Wp_{n}(\Psi)= \limsup_{a_{0} \to \infty} \Ap_{a_{0}}(\Psi).
\end{equation*}
Similarly to the real case we adopt the following simplified notation for $\Wp_{n}(\Psi)$ for $\Psi$ of a special form: $\Wp_{n}(\psi)$ if $\psi_{1}= \dots= \psi_{n}=\psi$; $\Wp_{n}(\btau)$ if $\psi_{i}(q)=q^{-\tau_{i}}$ for some $\btau=(\tau_{1}, \dots \tau_{n}) \in \R^{n}_{+}$; and
$\Wp_{n}(\tau)$  if furthermore $\btau=(\tau,\dots,\tau)$ for some $\tau>0$.

It will be convenient to consider the slightly smaller subset of $\Wp_{n}(\Psi)$ defined by requiring that the rational approximations in each coordinate are reduced rational fractions. Indeed, this is the setting that was considered by Haynes in \cite{H10}, where he showed that, in the case $\psi$ is not monotonic, establishing a zero-one law for $\Sp_{n}(\psi)$ requires this additional condition. For each $a_0\in\N$, let
\begin{equation}\label{Ap'}
\Ap_{a_{0}}'(\Psi)= \bigcup_{\substack{(a_1,\dots,a_n) \in\Z^n\\ |a_{i}| \leq a_{0}\;\&\;(a_{i},a_0)=1 \; (1\le i\le n)}} \left\{ \bx \in \Zp^{n}: \left| x_{i} - \frac{a_{i}}{a_{0}} \right|_{p} < \psi_{i}(a_{0}) \;\;\text{for all } 1 \leq i \leq n \right\}
\end{equation}
and define the corresponding limsup set as
\begin{equation*}
\Wp_{n}'(\Psi)= \limsup_{a_{0} \to \infty} \Ap'_{a_{0}}(\Psi).
\end{equation*}

We now state the main results of this paper, which are the $p$-adic analogues of Theorems~\ref{khintch}, ~\ref{rynne}, and ~\ref{BLW}. We begin with the $p$-adic equivalent of Theorem~\ref{khintch}.

%
%

\begin{theorem} \label{wkpadic}
Let $\psi_{i}: \N \to \R_+$ be approximation functions with $\psi_{i}(q)\ll \frac{1}{q}$ for each $1 \leq i \leq n$ and let $\Psi=(\psi_{1}, \dots, \psi_{n})$. Suppose that $\prod_{i=1}^{n} \psi_{i}$ is monotonically decreasing. Then
\begin{equation*}
\mu_{p,n}(\Wp_{n}'(\Psi))= \begin{cases}
0 \quad \text{ if } \, \, \sum_{q=1}^{\infty} q^{n} \prod_{i=1}^{n} \psi_{i}(q) < \infty, \\
1 \quad \text{ if } \, \, \sum_{q=1}^{\infty} q^{n} \prod_{i=1}^{n} \psi_{i}(q) = \infty.
\end{cases}
\end{equation*}
\end{theorem}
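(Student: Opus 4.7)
The plan is to prove the two halves separately: the convergence case by a direct Borel--Cantelli estimate, and the divergence case by combining the paper's Zero--One law for $\Wp_n'(\Psi)$ with a second moment (Paley--Zygmund / Chung--Erd\H{o}s) argument that exploits the monotonicity of $\prod_i\psi_i$.

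\textbf{Convergence.} First, I would observe that $\Ap_{a_0}'(\Psi)=\emptyset$ whenever $p\mid a_0$: if $p\mid a_0$ and $\gcd(a_i,a_0)=1$ then $|a_i/a_0|_p=p^{\nu_p(a_0)}>1$, so by the ultrametric inequality no $x\in\Z_p$ can satisfy $|x-a_i/a_0|_p<\psi_i(a_0)$ once $\psi_i(a_0)<1$ (which follows from $\psi_i(q)\ll 1/q$). Restricting to $p\nmid a_0$, the centres $a_i/a_0$ lie in $\Z_p^\times$ and each coordinate ball has $\mu_p$-measure at most $\psi_i(a_0)$. Counting reduced tuples crudely by $(2a_0+1)^n$ gives
\[
\mu_{p,n}(\Ap_{a_0}'(\Psi))\ \ll\ a_0^{\,n}\prod_{i=1}^n\psi_i(a_0),
\]
so the assumed convergence of $\sum q^n\prod_i\psi_i(q)$ together with the Borel--Cantelli lemma yields $\mu_{p,n}(\Wp_n'(\Psi))=0$.

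\textbf{Divergence.} By the Zero--One law for weighted $p$-adic approximations mentioned in the abstract, it suffices to show $\mu_{p,n}(\Wp_n'(\Psi))>0$. I would use the Chung--Erd\H{o}s inequality
\[
\mu_{p,n}\Big(\limsup_{a_0\to\infty}\Ap_{a_0}'(\Psi)\Big)\ \ge\ \limsup_{N\to\infty}\frac{\big(\sum_{a_0\le N}\mu_{p,n}(\Ap_{a_0}'(\Psi))\big)^{2}}{\sum_{a_0,a_0'\le N}\mu_{p,n}(\Ap_{a_0}'(\Psi)\cap\Ap_{a_0'}'(\Psi))}.
\]
Two ingredients are then required. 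First, a \emph{lower bound} of the form $\mu_{p,n}(\Ap_{a_0}'(\Psi))\gg (\phi(a_0)/a_0)^{n}\cdot a_0^{\,n}\prod_i\psi_i(a_0)$ for $p\nmid a_0$, obtained by counting the distinct $p$-adic balls produced by the reduced centres $a_i/a_0\bmod p^{k_i}$, where $p^{-k_i}$ is the scale of $\psi_i(a_0)$; using $\sum\phi(a_0)^n/a_0\gg N$ together with a Cauchy condensation on the monotone product $\prod_i\psi_i$ shows the numerator is of the expected order. Second, an \emph{upper bound} on the pairwise overlap $\mu_{p,n}(\Ap_{a_0}'(\Psi)\cap\Ap_{a_0'}'(\Psi))$ for $a_0\ne a_0'$: since two $p$-adic balls are either disjoint or nested, coordinatewise the contribution for a fixed pair $(a_i/a_0,a_i'/a_0')$ is bounded by $\min(\psi_i(a_0),\psi_i(a_0'))$, and one counts integer solutions of $|a_ia_0'-a_i'a_0|_p$ being small subject to the coprimality and size constraints. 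Summing and applying the monotonicity of $\prod_i\psi_i$ should yield a near-product bound that matches the square of the first moment up to a uniform constant, giving positive $\mu_{p,n}$-measure and, by the Zero--One law, full measure.

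\textbf{Main obstacle.} The technical heart is the pairwise overlap estimate in the weighted, reduced-fraction setting. One must combine a coordinatewise $p$-adic divisibility analysis of $a_ia_0'-a_i'a_0$ with the fact that only the product $\prod_i\psi_i$ is monotone (individual $\psi_i$'s may oscillate), so that the usual trick of pulling a single $\psi$ out of a Gallagher-type sum is unavailable. I expect this to be handled by splitting the overlap sum according to the $p$-adic size of $a_ia_0'-a_i'a_0$ in each coordinate, then applying Cauchy condensation to $q\mapsto q^n\prod_i\psi_i(q)$ to re-aggregate the pieces into a bound of the correct order; the reduced-fraction condition (producing the $\phi(a_0)$ factor) is what ensures the resulting quasi-independence constant is absolute.
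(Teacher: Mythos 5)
Your roadmap is essentially the paper's own proof: the same convergence argument via a Borel--Cantelli estimate of $\mu_{p,n}(\Ap'_{a_0}(\Psi))$, the same Chung--Erd\H{o}s second-moment argument for the divergence case, followed by the Zero--One law (Lemma~\ref{haynes_0-1}) to upgrade positive measure to full measure. The first-moment lower bound $\mu_{p,n}(\Ap'_{a_0}(\Psi))\gg \varphi(a_0)^n\prod_i\psi_i(a_0)$ via disjointness of the level-$a_0$ rectangles, and the overlap bound $\mu_{p,n}(\Ap'_{a_0}\cap\Ap'_{b_0})\ll a_0^n b_0^n\prod_i\psi_i(a_0)\psi_i(b_0)$ via counting integer pairs with $|a_ib_0-b_ia_0|_p$ small, are precisely what the paper proves as its claims (b) and (c).

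One small miscalibration in your plan: you anticipate needing the monotonicity of $\prod_i\psi_i$ inside the pairwise-overlap estimate (your ``Main obstacle'' paragraph, with the proposed splitting by $p$-adic size of $a_ia_0'-a_i'a_0$ and Cauchy condensation). In fact the overlap bound is a pointwise estimate for fixed $a_0,b_0$ and needs no monotonicity at all: one bounds the number of admissible pairs $(a_i,b_i)$ coordinatewise by $\ll a_0b_0\max\{\psi_i(a_0),\psi_i(b_0)\}$ and multiplies by the ball measure $\min\{\psi_i(a_0),\psi_i(b_0)\}$, giving $a_0b_0\psi_i(a_0)\psi_i(b_0)$ per coordinate directly. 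Monotonicity of the product enters only once, in the standard averaging step $\sum_{q\le N}\varphi(q)^n\prod_i\psi_i(q)\asymp\sum_{q\le N}q^n\prod_i\psi_i(q)$ (via $\sum_{q\le N}\varphi(q)^n\gg N^{n+1}$ and partial summation), which is what reconciles the first moment with the second. The Duffin--Schaeffer-type hypothesis in the paper's Theorem~\ref{DS_wkpadic} exists precisely to avoid this step when monotonicity fails. Also, the expression $\sum\varphi(a_0)^n/a_0\gg N$ in your sketch is dimensionally off for $n>1$; the relevant fact is $\sum_{q\le N}\varphi(q)^n\gg N^{n+1}$.
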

\begin{remark}\label{rem2.2} \rm
Note that the condition that each $\psi_{i}(q) \ll \frac{1}{q}$ is necessary, since the $p$-adic distance between any two rational integers can be made arbitrarily small. This is in stark contrast to the real case where $\psi(q)<\frac{1}{2}$ is sufficient to ensure rectangles in the same 'layer' $\A_{q}(\Psi)$ are non-intersecting.
\end{remark}
The $p$-adic simultaneous version of this theorem, that is when $\psi_1 = \dots = \psi_n$, was established by Jarn\'ik in \cite{J45}. Jarn\'ik's theorem was further generalised by Lutz \cite{L55} to systems of linear forms. We remark that the monotonicity condition is only required in the divergence case. We remove the monotonicity condition of Theorem~\ref{wkpadic} by establishing the following Duffin-Schaeffer type theorem \cite{DS41}.

\begin{theorem}\label{DS_wkpadic}
Let $\psi_i: \N \to \R_+$ be approximation functions with $\psi_{i}(q) \ll \frac{1}{q}$ for $1 \leq i \leq n$ and let $\Psi=(\psi_1, \dots , \psi_n)$. Suppose that
\begin{equation} \label{DS_condition}
\limsup_{N \to \infty} \frac{\sum_{q=1}^{N} \varphi(q)^{n} \prod_{i=1}^{n}\psi_{i}(q)}{\sum_{q=1}^{N} q^{n}\prod_{i=1}^{n}\psi_{i}(q)}>0\,,
\end{equation}
where $\varphi$ is the Euler phi-function. Then
\begin{equation*}
\mu_{p,n}(\Wp_{n}'(\Psi))= \begin{cases}
0 \quad \text{ if } \, \, \sum_{q=1}^{\infty} \varphi(q)^{n} \prod_{i=1}^{n} \psi_{i}(q) < \infty, \\
1 \quad \text{ if } \, \, \sum_{q=1}^{\infty} \varphi(q)^{n} \prod_{i=1}^{n} \psi_{i}(q) = \infty.
\end{cases}
\end{equation*}
\end{theorem}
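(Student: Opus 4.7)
I would split the proof into the convergence and divergence parts of the dichotomy. The convergence half is a direct Borel--Cantelli argument. The divergence half is more subtle, and I would proceed in two stages: first invoke the zero--one law for $\Wp_n'(\Psi)$ established elsewhere in the paper (as advertised in the abstract) to reduce the problem to showing $\mu_{p,n}(\Wp_n'(\Psi))>0$, and then obtain this positivity from the Chung--Erd\H{o}s form of the divergent Borel--Cantelli lemma combined with a quasi-independence estimate for the family $\{\Ap'_{a_0}(\Psi)\}_{a_0\in\N}$.

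\textbf{Convergence.} For each $a_0$, the set $\Ap'_{a_0}(\Psi)$ is a union over at most $\asymp\varphi(a_0)^n$ tuples of reduced numerators, each contributing a rectangle of $\mu_{p,n}$-measure $\ll\prod_{i=1}^n\psi_i(a_0)$ (every $p$-adic ball $\{x\in\Zp:|x-a_i/a_0|_p<\psi_i(a_0)\}$ has $\mu_p$-measure comparable to $\psi_i(a_0)$). A union bound gives $\mu_{p,n}(\Ap'_{a_0}(\Psi))\ll\varphi(a_0)^n\prod_i\psi_i(a_0)$, and the convergence hypothesis together with Borel--Cantelli immediately yields $\mu_{p,n}(\Wp_n'(\Psi))=0$.

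\textbf{Divergence.} After invoking the zero--one law, it suffices to prove $\mu_{p,n}(\Wp_n'(\Psi))>0$. The Chung--Erd\H{o}s inequality gives
\[
\mu_{p,n}\Bigl(\limsup_{a_0\to\infty}\Ap'_{a_0}(\Psi)\Bigr)\;\ge\;\limsup_{N\to\infty}\frac{\bigl(\sum_{a_0\le N}\mu_{p,n}(\Ap'_{a_0}(\Psi))\bigr)^2}{\sum_{a_0,b_0\le N}\mu_{p,n}(\Ap'_{a_0}(\Psi)\cap\Ap'_{b_0}(\Psi))}\,.
\]
For the numerator, the ultrametric ensures that distinct reduced fractions with a common denominator $a_0$ (subject to $\psi_i(a_0)\ll 1/a_0$) produce disjoint coordinate balls, whence $\mu_{p,n}(\Ap'_{a_0}(\Psi))\asymp\varphi(a_0)^n\prod_i\psi_i(a_0)$. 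For the denominator, the diagonal $a_0=b_0$ contributes only the one-fold sum, which is of lower order. For the off-diagonal, the ultrametric again simplifies matters---two balls centred at $a_i/a_0$ and $b_i/b_0$ are either nested or disjoint---and a nonempty intersection forces a congruence $a_ib_0\equiv b_ia_0\pmod{p^{k_i}}$ with $k_i$ an explicit function of the valuations of $\psi_i(a_0)$, $\psi_i(b_0)$, $a_0$, and $b_0$. Counting solutions of these $n$ congruences subject to $|a_i|\le a_0$, $|b_i|\le b_0$, and the coprimality constraints yields, after summation,
\[
\sum_{\substack{a_0,b_0\le N\\a_0\ne b_0}}\mu_{p,n}\bigl(\Ap'_{a_0}(\Psi)\cap\Ap'_{b_0}(\Psi)\bigr)\;\ll\;\Bigl(\sum_{a_0\le N}a_0^n\prod_{i=1}^n\psi_i(a_0)\Bigr)^2.
\]
Hypothesis \eqref{DS_condition} is then used to convert this into $\ll(\sum_{a_0\le N}\varphi(a_0)^n\prod_i\psi_i(a_0))^2$ along a sequence $N_k\to\infty$ realising the limsup in \eqref{DS_condition}, matching the numerator of the Chung--Erd\H{o}s ratio and forcing it to stay bounded below by a positive constant along $N_k$. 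The zero--one law then promotes positive measure to full measure.

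\textbf{Main obstacle.} The central technical step is the quasi-independence estimate for the off-diagonal intersections. The $p$-adic ultrametric makes the geometry of ball overlaps considerably cleaner than in the Archimedean setting---balls are nested or disjoint rather than partially overlapping---but one still has to stratify pairs $(a_0,b_0)$ according to the valuations of $a_0$, $b_0$, and $a_ib_0-b_ia_0$, and exploit the coprimality conditions $\gcd(a_i,a_0)=\gcd(b_i,b_0)=1$ fully in order to avoid overcounting. It is precisely in the final step, where $a_0^n$ must be traded for $\varphi(a_0)^n$, that hypothesis \eqref{DS_condition} becomes indispensable; this also explains why the theorem is formulated for the reduced-fraction variant $\Wp_n'(\Psi)$ rather than $\Wp_n(\Psi)$.
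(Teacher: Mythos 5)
Your proposal matches the paper's argument essentially step for step: a Borel--Cantelli union bound with $\mu_{p,n}(\Ap'_{a_0}(\Psi))\ll\varphi(a_0)^n\prod_i\psi_i(a_0)$ for convergence, and for divergence the same two-sided estimate $\mu_{p,n}(\Ap'_{a_0}(\Psi))\asymp\varphi(a_0)^n\prod_i\psi_i(a_0)$ (via ultrametric disjointness of the coordinate balls when $\psi_i(q)<1/q$), the same congruence-counting bound $\mu_{p,n}(\Ap'_{a_0}(\Psi)\cap\Ap'_{b_0}(\Psi))\ll a_0^nb_0^n\prod_i\psi_i(a_0)\psi_i(b_0)$ fed into the Kochen--Stone/Chung--Erd\H{o}s inequality with hypothesis \eqref{DS_condition} supplying the positive limsup, and finally the zero--one law (Lemma~\ref{haynes_0-1}) upgrading positive measure to full measure. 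The approach and the key estimates are the same as in the paper.
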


Note that in both Theorem~\ref{wkpadic} and Theorem~\ref{DS_wkpadic} if $\prod_{i=1}^{n} \psi_{i}(q)< q^{-n-1-\epsilon}$ for any $\epsilon>0$, then $\mu_{p,n}(\Wp_{n}'(\Psi))=0$. To quantify the size of this set further depending on how small this product is we use Hausdorff dimension to establish a $p$-adic version of Theorem~\ref{rynne}.

\begin{theorem} \label{weighted_jb}
 Let $\btau=(\tau_{1}, \dots, \tau_{n}) \in \R^{n}_{+}$ be such that $\sum_{i=1}^{n}\tau_{i} > n+1$ and $\tau_{i}>1$ for each $1 \leq i \leq n$. Then
 \begin{equation*}
 \dim \Wp_{n}(\btau) = \min_{1 \leq i \leq n} \left\{ \frac{n+1+\sum_{\tau_{j}<\tau_{i}}(\tau_{i}-\tau_{j})}{\tau_{i}} \right\}.
 \end{equation*}
 \end{theorem}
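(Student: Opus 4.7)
The plan is to establish the upper and lower bounds on $\dim \Wp_{n}(\btau)$ separately. The upper bound will come from a direct covering argument using the natural $\limsup$ structure of $\Wp_{n}(\btau)$, and the lower bound from combining the $p$-adic Khintchine theorem (Theorem~\ref{wkpadic}) with a $p$-adic analogue of the Wang--Wu Mass Transference Principle for rectangles \cite{WW19}.

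For the upper bound, I would start from the inclusion $\Wp_{n}(\btau) \subseteq \bigcup_{a_{0} \geq Q} \Ap_{a_{0}}(\btau)$ valid for every $Q \in \N$. Each $\Ap_{a_{0}}(\btau)$ is a union of at most $(2a_{0}+1)^{n} \ll a_{0}^{n}$ rectangles, where each rectangle is a product of $p$-adic balls of radii $a_{0}^{-\tau_{i}}$ ($1 \leq i \leq n$). For a fixed $k$, I would cover each such rectangle by $p$-adic balls of radius $a_{0}^{-\tau_{k}}$; this uses $\prod_{i:\,\tau_{i}<\tau_{k}} a_{0}^{\tau_{k}-\tau_{i}}$ balls per rectangle, since in the directions with $\tau_{i} \geq \tau_{k}$ a single covering ball suffices. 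Summing the $s$-cost contributions over $a_{0} \geq Q$ gives
\begin{equation*}
\ha^{s}(\Wp_{n}(\btau)) \; \ll \; \sum_{a_{0} \geq Q} a_{0}^{\,n+\sum_{i:\tau_{i}<\tau_{k}}(\tau_{k}-\tau_{i})-s\tau_{k}},
\end{equation*}
which converges whenever $s > s_{k} := \bigl(n+1+\sum_{j:\tau_{j}<\tau_{k}}(\tau_{k}-\tau_{j})\bigr)/\tau_{k}$. Letting $Q \to \infty$ gives $\ha^{s}(\Wp_{n}(\btau)) = 0$ for every such $s$, and optimising over $k$ yields $\dim \Wp_{n}(\btau) \leq \min_{k} s_{k}$.

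For the lower bound, I would first pick exponents $u_{1}, \dots, u_{n}$ satisfying $1 \leq u_{i} \leq \tau_{i}$ for each $i$ and $\sum_{i=1}^{n} u_{i} = n+1$. Such a choice is possible precisely because $\tau_{i} > 1$ for each $i$ and $\sum_{i=1}^{n} \tau_{i} > n+1$: starting from $u_{i} = 1$ (with total $n$) one can redistribute one extra unit amongst the coordinates within the allowed band $[1,\tau_{i}]$. Setting $\psi_{i,0}(q) := q^{-u_{i}}$ and $\Psi_{0} := (\psi_{1,0}, \dots, \psi_{n,0})$, the product $\prod_{i} \psi_{i,0}(q) = q^{-n-1}$ is monotonically decreasing and $\sum_{q=1}^{\infty} q^{n}\prod_{i} \psi_{i,0}(q) = \sum_{q} q^{-1} = \infty$. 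Theorem~\ref{wkpadic} then yields $\mu_{p,n}(\Wp_{n}'(\Psi_{0})) = 1$, and since $\Wp_{n}'(\Psi_{0}) \subseteq \Wp_{n}(\Psi_{0})$, the $\limsup$ set $\Wp_{n}(\Psi_{0})$ has full Haar measure in $\Zp^{n}$. The rectangles underlying $\Wp_{n}(\Psi_{0})$ have sides $a_{0}^{-u_{i}}$, whereas those underlying $\Wp_{n}(\btau)$ are the \emph{same} rectangles shrunk to sides $a_{0}^{-\tau_{i}}$ with $\tau_{i} \geq u_{i}$. Applying the Wang--Wu mass transference principle for rectangles (in the $p$-adic setting) then gives
\begin{equation*}
\dim \Wp_{n}(\btau) \; \geq \; \min_{k} \frac{\sum_{i} u_{i} + \sum_{i:\tau_{i}<\tau_{k}}(\tau_{k}-\tau_{i})}{\tau_{k}} \; = \; \min_{k} \frac{n+1+\sum_{j:\tau_{j}<\tau_{k}}(\tau_{k}-\tau_{j})}{\tau_{k}},
\end{equation*}
matching the upper bound.

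The principal technical obstacle is justifying the Wang--Wu MTP in the $p$-adic ultrametric setting. The ultrametric on $\Qp$ actually simplifies the Vitali-type covering and nested Cantor-set construction of \cite{WW19}, because balls of equal radius are either disjoint or coincide and products of $p$-adic balls behave cleanly with respect to $\mu_{p,n}$; one expects the adaptation to be essentially routine, either by quoting a $p$-adic MTP for balls (a direct analogue of \cite{BV06}) and bootstrapping to the rectangular case, or by re-running the Wang--Wu argument verbatim with balls in $\Qp^{n}$ in place of cubes in $\R^{n}$. A secondary subtlety is to confirm that $\Psi_{0}$ lies within the hypotheses of Theorem~\ref{wkpadic}: monotonicity of $\prod_{i} \psi_{i,0}$ is automatic from its being a pure power of $q$, and the constraint $\psi_{i,0}(q) \ll 1/q$ is exactly our requirement $u_{i} \geq 1$.
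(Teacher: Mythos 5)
Your upper bound is exactly the covering argument the paper uses, so that part is fine.

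For the lower bound you take a genuinely different route from the paper. The paper verifies local ubiquity for the rational rectangles directly (Proposition~\ref{ubiquity_rectangle}, via a $p$-adic Minkowski/Dirichlet lemma and a careful exclusion of denominators divisible by high powers of $p$) and then applies the Wang--Wu MTP \emph{with} ubiquity (Theorem~\ref{MTPRR}); you instead derive full measure of $\Wp_n(\Psi_0)$ from the $p$-adic Khintchine theorem (Theorem~\ref{wkpadic}) and invoke the Wang--Wu MTP \emph{without} ubiquity (Theorem~\ref{MTPRR_full_measure}). Your route is legitimate for a dimension statement and is, in fact, the strategy the paper itself uses for Theorems~\ref{lowerbnd}--\ref{lowerbnd_manifold}; what it sacrifices is the $\ha^{s}$-Hausdorff measure conclusion that the ubiquity version delivers (cf.\ \eqref{MTPRR_measure}), though Theorem~\ref{weighted_jb} does not assert that. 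Your worry about ``justifying the Wang--Wu MTP in the $p$-adic ultrametric setting'' is however a non-issue: Theorems~\ref{MTPRR} and \ref{MTPRR_full_measure} are already stated for arbitrary products of bounded locally compact metric spaces carrying $\delta_i$-Ahlfors regular probability measures, and $(\Zp,|\cdot|_p,\mu_p)$ is $1$-Ahlfors regular, so they apply verbatim with no re-derivation.

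The genuine gap is in the step ``applying the Wang--Wu mass transference principle for rectangles then gives $\dim\Wp_n(\btau)\ge\min_k\frac{n+1+\sum_{\tau_j<\tau_k}(\tau_k-\tau_j)}{\tau_k}$.'' That is \emph{not} the conclusion of Theorem~\ref{MTPRR_full_measure}. The Wang--Wu bound is
\begin{equation*}
\dim W(\bt)\geq\min_{A_i\in A}\left\{\textstyle\sum_{j\in K_1}\delta_j+\sum_{j\in K_2}\delta_j+k\sum_{j\in K_3}\delta_j+(1-k)\frac{\sum_{j\in K_3}a_j\delta_j-\sum_{j\in K_2}t_j\delta_j}{A_i}\right\},
\end{equation*}
a minimum over the \emph{larger} index set $A=\{a_i,a_i+t_i:1\le i\le n\}$, computed through the partition $K_1,K_2,K_3$ that depends on the chosen $A_i$ and on your chosen $a_j=u_j$. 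Turning this into the closed-form answer is exactly the content of the paper's three-case analysis (cases i)--iii)), and that analysis relies on a particular recursive choice of $\ba=(\al_1,\dots,\al_n)$ (equal to $\tau_i$ on the small-exponent coordinates and constant on the remaining ones so as to total $n+1$), not on an arbitrary $\bv=(u_1,\dots,u_n)$ with $1\le u_i\le\tau_i$ and $\sum u_i=n+1$. You must still (a) show that some admissible choice of $u_i$ makes the minimum over $A$ collapse to $\min_k\frac{n+1+\sum_{\tau_j<\tau_k}(\tau_k-\tau_j)}{\tau_k}$, and (b) run the partition bookkeeping that establishes it. Without this computation the lower bound is only asserted, not proved. (If you intend your claim that \emph{any} admissible $u_i$ gives the same minimum, that is an additional nontrivial assertion that would itself need an argument; the safer path is to fix the recursive choice and mimic the paper's case analysis.)
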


 \begin{remark}\rm
 We note that the condition on the summation of the exponent vector $\btau$ is present due to the fact that if $\sum_{i=1}^{n}\tau_{i} \leq n+1$, then, by the $p$-adic version of Dirichlet's Theorem, we have that $\Wp_{n}(\btau)=\Zp^{n}$.
 \end{remark}

 \begin{remark} \rm
 The condition that each $\tau_{i}>1$ may seem unnecessarily restrictive, however, the following reasoning shows why this must be the case. Similarly to Remark~\ref{rem2.2} the key reasoning behind the condition is that $\Z$ is dense in $\Zp$ so in any coordinate axis where $\tau_{i}<1$ all points along the axis can be approximated, regardless of the choice of $a_{0}$ in our approximation sets. If, for example, we considered the approximation set $\Wp_{2}((1-\varepsilon, \tau_{2}))$ for $\varepsilon>0$ and $\tau_{2}>2$ then the above argument gives us that $\Wp_{2}((1-\varepsilon,\tau_{2}))=\Zp \times \Wp_{1}(\tau_{2})$. Using well known bounds on the Hausdorff dimension of product spaces (see e.g \cite{T82}) we have that
 \begin{equation*}
 \dim \Wp_{1}(\tau_{2}) + \dim \Zp \leq \dim \Wp_{2}((1-\varepsilon, \tau_{2})) \leq \dim \Wp_{1}(\tau_{2}) +\dim_{B}\Zp,
 \end{equation*}
 where $\dim_{B}$ is the box-counting dimension, we have that
 \begin{equation}\label{eqn-6}
 \dim \Wp_{2}((1-\varepsilon, \tau_{2}))=\frac{2}{\tau_{2}}+1.
 \end{equation}
 However, if Theorem \ref{weighted_jb} was applicable we would have that
 \begin{equation*}
 \dim \Wp_{2}((1-\varepsilon, \tau_{2}))=\min \left\{ \frac{3+(\tau_{2}-(1-\varepsilon))}{\tau_{2}}, \frac{3}{1-\varepsilon} \right\}=\frac{2}{\tau_{2}}+\frac{\tau_{2}+\varepsilon}{\tau_{2}}\,,
 \end{equation*}
 contrary to \eqref{eqn-6}.
 \end{remark}

Theorem~\ref{weighted_jb} can be further extended to general approximation functions. Suppose that the limits
 \begin{equation} \label{general}
 v_{i}= \lim_{q \to \infty} \frac{-\log \psi_{i}(q)}{\log q},
 \end{equation}
exist and are positive for each $1 \leq i \leq n$. Define the exponents vector $\mathbf{v}=(v_{1}, \dots, v_{n}) \in \R^{n}_{+}$.

 \begin{corollary}
 Let $\Psi$ be such that the limits \eqref{general} exist and are positive. Suppose that $\sum_{i=1}^{n} v_{i}>n+1$ and each $v_{i}>1$. Then
 \begin{equation*}
 \dim \Wp_{n}(\Psi) = \min_{1 \leq i \leq n} \left\{ \frac{n+1+\sum_{v_{j}<v_{i}}(v_{i}-v_{j})}{v_{i}} \right\}.
 \end{equation*}
 \end{corollary}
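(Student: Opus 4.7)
The plan is to sandwich $\Wp_n(\Psi)$ between two power-type approximation sets of the form treated by Theorem~\ref{weighted_jb} and then let the perturbation parameter tend to $0$. Fix $\varepsilon>0$. By the definition of the limits in \eqref{general}, there exists $Q=Q(\varepsilon)$ such that for every integer $q\ge Q$ and every $1\le i\le n$,
\begin{equation*}
q^{-v_i-\varepsilon}\;\le\;\psi_i(q)\;\le\;q^{-v_i+\varepsilon}.
\end{equation*}
Writing $\btau^{+}=\mathbf{v}+\varepsilon(1,\dots,1)$ and $\btau^{-}=\mathbf{v}-\varepsilon(1,\dots,1)$, these inequalities immediately yield, for every $a_0\ge Q$, the inclusions $\Ap_{a_0}(\btau^{+})\subseteq\Ap_{a_0}(\Psi)\subseteq\Ap_{a_0}(\btau^{-})$ among the approximation shells \eqref{Ap}. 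Passing to the $\limsup$ in $a_0$ then gives
\begin{equation*}
\Wp_n(\btau^{+})\;\subseteq\;\Wp_n(\Psi)\;\subseteq\;\Wp_n(\btau^{-}),
\end{equation*}
and by monotonicity of Hausdorff dimension the same ordering holds for the dimensions.

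Next I would verify that $\btau^{+}$ and $\btau^{-}$ satisfy the hypotheses of Theorem~\ref{weighted_jb} for all sufficiently small $\varepsilon>0$. The inequality $\sum v_i>n+1$ is strict, so
\begin{equation*}
\sum_{i=1}^n(v_i\pm\varepsilon)=\sum_{i=1}^n v_i\pm n\varepsilon>n+1
\end{equation*}
provided $\varepsilon<\tfrac{1}{n}\bigl(\sum v_i-(n+1)\bigr)$. Similarly, $v_i>1$ for every $i$ yields $v_i\pm\varepsilon>1$ once $\varepsilon<\min_i(v_i-1)$. For every such $\varepsilon$, Theorem~\ref{weighted_jb} therefore applies to both perturbed exponent vectors.

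The key observation in the last step is that translating every coordinate by the same constant $\pm\varepsilon$ preserves the ordering of the $v_i$; in particular $\{j:v_j\pm\varepsilon<v_i\pm\varepsilon\}=\{j:v_j<v_i\}$, and within each such sum the $\pm\varepsilon$ contributions cancel. Consequently Theorem~\ref{weighted_jb} yields
\begin{equation*}
\dim\Wp_n(\btau^{\pm})\;=\;\min_{1\le i\le n}\left\{\frac{n+1+\sum_{v_j<v_i}(v_i-v_j)}{v_i\pm\varepsilon}\right\}.
\end{equation*}
Both expressions are continuous functions of $\varepsilon$ that tend to the value in the statement of the corollary as $\varepsilon\to 0^{+}$. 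Combined with the chain of inclusions above, this sandwich forces
\begin{equation*}
\dim\Wp_n(\Psi)=\min_{1\le i\le n}\left\{\frac{n+1+\sum_{v_j<v_i}(v_i-v_j)}{v_i}\right\},
\end{equation*}
as required.

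There is no serious obstacle here: the argument is purely a continuity-and-monotonicity reduction to Theorem~\ref{weighted_jb}. The only point deserving care is that the perturbed exponent vectors still lie inside the region where Theorem~\ref{weighted_jb} is valid, which is guaranteed by the strict inequalities $v_i>1$ and $\sum v_i>n+1$ in the hypothesis.
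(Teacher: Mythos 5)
Your proof is correct and is essentially the same sandwich argument the paper gives: compare $\Psi$ against the power functions $q^{-(v_i\pm\varepsilon)}$, deduce $\Wp_n(\mathbf{v}+\bm\varepsilon)\subseteq\Wp_n(\Psi)\subseteq\Wp_n(\mathbf{v}-\bm\varepsilon)$, apply Theorem~\ref{weighted_jb} to both ends, and let $\varepsilon\to 0$. You simply spell out a couple of details the paper leaves implicit, namely that the perturbed exponent vectors still satisfy the hypotheses of Theorem~\ref{weighted_jb} for small $\varepsilon$ and that the numerators in the dimension formula are invariant under a uniform shift of the exponents.
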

 \begin{proof}
 By the condition that each function $\psi_{i}$ has corresponding positive limit \eqref{general}, for any $\epsilon>0$ we have that
 \begin{equation*}
 q^{-(v_{i}+\epsilon)} \leq \psi_i(q) \leq q^{-(v_{i}-\epsilon)}  \quad (1 \leq i \leq n)
 \end{equation*}
 for all sufficiently large $q \in \N$. Let $\bm\epsilon=(\epsilon, \dots, \epsilon) \in \R^{n}_{+}$. Then, we have that
 \begin{equation*}
 \Wp_{n}(\mathbf{v}+ \bm\epsilon ) \subseteq \Wp_{n}(\Psi) \subseteq \Wp_{n}(\mathbf{v}- \bm\epsilon).
 \end{equation*}
 By letting $\epsilon \to 0$, and applying Theorem~\ref{weighted_jb} we get the required result.
 \end{proof}

When it comes to $p$-adic approximations on curves and manifolds, less is known. In \cite{KT07} Kleinbock and Tomanov generalized the key results from \cite{KM98} to the $S$-arithmetic setting, which includes the $p$-adic setting. In particular, Kleinbock and Tomanov proved that under the natural assumption $\sum_{i=1}^{n}\tau_{i} > n+1$ the set $\Wp_{n}(\btau) \cap C$ has zero measure on $C$ for a large and natural class of manifolds in $\Q_p^n$. Whilst there are no results relating to the Haar measure of $\Wp_{n}(\Psi) \cap C$ for $C$ a $p$-adic curve or manifold in the case $\Psi$ is a general $n$-tuple of approximation functions, there are several results for dual approximation including inhomogeneous setting, see \cite{BBK05, BK03, MR2919847, MR2863836, MR2670212, DG1, DG2, MG12}.
Regarding the Hausdorff dimension of $\Wp_{n}(\btau) \cap C$, Bugeaud, Budarina, Dickinson, and O'Donnell \cite{MR2837469} and more lately Badziahin, Bugeaud and Schleischitz \cite{BBS20} calculated $\dim(\Wp_{n}(\tau) \cap C)$ in the case $C=(x,\dots,x^n)$ for relatively large exponents $\tau$. Apart from these pair of findings nothing else seems to be known. In this paper we obtain a sharp lower bound on the dimension of $\Wp_{n}(\btau) \cap C$ for a natural and very general class of manifolds defined over $\Z_p^{d}$ and relatively small exponent vector $\btau$. Specifically we will consider manifolds immersed by maps with the following property, which is a multivariable analogue of $C^{1}$ functions given in for example \cite{S11}.

\begin{definition}\rm \label{DQE}
A function $f: \U \to \Qp$ defined on an open set $\U\subset\Qp^{d}$ will be referred to as {\em differentiable with quadratic error} (\DQE) at $\bx\in\U$ if there exists constants $C>0$ and $\varepsilon>0$ and $p$-adic numbers $\partial f(\bx)/\partial x_{\ell}\in\Q_p$ $(1\le \ell\le d)$, which will be referred to as partial derivatives of $f$ at $\bx$, such that for any $\by \in B(\bx,\varepsilon)\subset\U$
\begin{equation}\label{eqn007}
\left|f(\by)-f(\bx)-\sum_{i=1}^{d}\frac{\partial f(\bx)}{\partial x_{i}}(y_i-x_i)\right|_{p}< C\max_{1 \leq i \leq d} |y_{i}-x_{i}|_{p}^{2}\,.
\end{equation}
We will say that a map $\bff=(f_1,\dots,f_m):\U\to\Z_p^m$ is \DQE{} at $\bx$ if each coordinate function $f_j$ is \DQE{} at $\bx$. We will say that $f$ (resp. $\bff$) is \DQE{} on $\U$ if it is \DQE{} at each point $\bx\in\U$.
\end{definition}

\begin{remark}\rm
Note that if the right hand side of \eqref{eqn007} was simply $o\left(\max_{1 \leq j \leq d} |y_{j}-x_{j}|_{p}\right)$ then $f$ would be simply differentiable at $\bx$. The above definition imposes a stronger condition than differentiability in the sense that the error term in \eqref{eqn007} is quadratic. It is readily verified that any $C^2$ function, as defined in \cite{S06} (see also \cite{KT07} for a brief survey of $p$-adic $C^k$ functions), is \DQE{} at every point. The converse may not be true. Mahler's normal functions are $C^{\infty}$ and so they are \DQE.
\end{remark}

%
%

We are now in  position to state our results for $\btau$-approximable points on manifolds given as $\Cf=\{(\bx,\bff(\bx)):\bx\in\U\}$ for some map $\bff:\U\to\Q_p^m$, $\U\subset\Z_p^d$, thus extending Theorem~\ref{BLW} to the $p$-adic setting. This can be done in two ways: by stating our results for exactly the set $\Wp_n(\btau) \cap \Cf$, or by stating them for the set of $\bx\in\U$ such that $\ff(\bx):=(\bx,\bff(\bx))\in \Wp_n(\btau)$. We opt for the latter since it requires fewer assumptions, albeit the two ways are equivalent if we assume that $\bff$ is a Lipschitz map, which follows from Proposition~\ref{falc}. Thus, our statements will be about the Hausdorff measure and dimension of
$$
\ff^{-1}\left(\Wp_n(\btau)\right):=\{\bx\in\U:\ff(\bx)=(\bx,\bff(\bx))\in\Wp_n(\btau)\}\,.
$$
It is easily seen that this set is a subset of the projection of $\Wp_n(\btau)$ onto the first $d$ coordinates.

\begin{theorem} \label{lowerbnd}
Let $\bff: \U \to \Zp^{m}$ be \DQE{} on an open set $\U\subseteq \Zp^{d}$ and for $\bx\in\U$ let $\ff(\bx)=(\bx,\bff(\bx))$. Suppose
\begin{equation*}
1+\frac{1}{n} < \tau < 1+ \frac{1}{m},
\end{equation*}
Then
\begin{equation}\label{vb1}
\dim (\ff^{-1}\left(\Wp_n(\tau)\right) \geq s:= \frac{n+1}{\tau}-m\,.
\end{equation}
Furthermore, if $\bff$ is Lipschitz on $\U$ then for any ball $B\subset\U$
\begin{equation}\label{vb1+}
\ha^{s}\left(B\cap \ff^{-1}\left(\Wp_n(\tau)\right)\right)= \ha^{s}(B).
\end{equation}
\end{theorem}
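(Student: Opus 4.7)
The plan is to apply the Mass Transference Principle (MTP) of \cite{BV06} in $\Zp^{d}$, following the same high-level strategy as the proof of Theorem~\ref{BLW} in the Euclidean setting. The target set $\ff^{-1}(\Wp_n(\tau))$ will be realised as containing a $\limsup$ of $p$-adic balls in $\Zp^{d}$, after which MTP converts a full Haar-measure statement for an enlarged $\limsup$ into the desired Hausdorff-measure statement \eqref{vb1+}, with \eqref{vb1} as an immediate corollary.

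For each $a_0 \in \N$ with $\gcd(a_0, p) = 1$, and each $\bp \in \Z^d$ satisfying $|p_i| \leq a_0$ and $\bp/a_0 \in \U$, call the pair $(a_0, \bp)$ \emph{admissible} if there exist integers $q_1, \dots, q_m$ with $|q_j| \leq a_0$ and $|f_j(\bp/a_0) - q_j/a_0|_p < a_0^{-\tau}$ for every $j$. Using the DQE expansion on the ball $B(\bp/a_0, a_0^{-\tau}) \subset \Zp^{d}$---combined with the fact that the partial derivatives $\partial f_j/\partial x_i(\bp/a_0)$ have $p$-adic absolute value at most $1$ (after shrinking $\U$ or rescaling if necessary), which is what makes the linear part contribute only at scale $a_0^{-\tau}$---one finds $|f_j(\bx) - f_j(\bp/a_0)|_p < a_0^{-\tau}$ for every $\bx \in B(\bp/a_0, a_0^{-\tau})$. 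The ultrametric triangle inequality then yields $\ff(\bx) \in \Ap_{a_0}(\tau)$ whenever $(a_0, \bp)$ is admissible, so
\[
V^{*} \;:=\; \limsup_{a_0 \to \infty} \bigcup_{(a_0,\bp)\text{ admissible}} B(\bp/a_0, a_0^{-\tau}) \;\subseteq\; \ff^{-1}(\Wp_n(\tau)),
\]
and it suffices to show that $\ha^{s}(B \cap V^{*}) = \ha^{s}(B)$ for every ball $B \subset \U$.

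The heart of the proof is a counting and ubiquity estimate. Admissibility forces, for each $j$ independently, the $p$-adic residue of $a_0 f_j(\bp/a_0)$ modulo $p^{\lceil \tau \log_p a_0 \rceil}$ to lie in a specific set of relative density $\sim a_0^{1-\tau}$. Linearising $\bff$ on small $p$-adic balls via DQE and exploiting the equidistribution of the $\bp/a_0$ modulo $p$-powers, the number of admissible $\bp$ with $\bp/a_0 \in B$ is $\asymp \mu_{p,d}(B)\cdot a_0^{d + m(1-\tau)}$. Set $\eta := (n+1-m\tau)/d$; the hypothesis $1+1/n < \tau < 1+1/m$ places $\eta$ in the range $(1, \tau)$. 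The total Haar measure of $\bigcup B(\bp/a_0, a_0^{-\eta})$ per scale $a_0$ is then of order $a_0^{-1}$, whose sum over $a_0$ diverges. A quasi-independence estimate for these $p$-adic balls---derivable from the tree structure of $\Zp^{d}$ and of the same flavour as the one powering the divergence half of Theorem~\ref{wkpadic}---upgrades this divergence into full Haar measure of the enlarged $\limsup$. Applying MTP with the radius map $r \mapsto r^{d/s}$, which sends $a_0^{-\eta}$ to $a_0^{-\tau}$ precisely for $s = d\eta/\tau = (n+1)/\tau - m$, produces $\ha^{s}(B \cap V^{*}) = \ha^{s}(B)$. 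The Lipschitz hypothesis is used to make the DQE constants uniform across $B$, validating the containment step globally.

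The main obstacle is the counting and ubiquity step: while the heuristic density of admissible $\bp$ is transparent from the residue-class argument, rigorously pinning down both the lower bound on the count and the quasi-independence required for full-measure $\limsup$ will demand a careful scale-by-scale analysis in which DQE is used to linearise $\bff$ inside each $p$-adic ball of interest. The hypothesis on $\tau$ is exactly what keeps $\eta = (n+1-m\tau)/d$ in the usable range $(1, \tau)$: $\tau > 1 + 1/n$ forces $\eta < \tau$ (so MTP genuinely shrinks the enlarged balls into the target scale), while $\tau < 1 + 1/m$ forces $\eta > 1$ (equivalently, the total measure of admissible balls per scale is $\gtrsim a_0^{-1}$ rather than summable).
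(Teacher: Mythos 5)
Your high-level scaffold is sound and matches the paper's: establish a containment $\ff^{-1}(\Wp_n(\tau)) \supset \limsup$ of small $p$-adic balls of radius $\asymp h^{-\tau}$ (using DQE / Lipschitz to push the horizontal approximation onto the manifold), establish full Haar measure for the $\limsup$ of the enlarged balls of radius $\asymp h^{-\eta}$ with $\eta = (n+1-m\tau)/d$, and feed this into the MTP with $g(x) = x^d$, $f(x) = x^s$, $s = d\eta/\tau = (n+1)/\tau - m$. Your exponent arithmetic — $\eta \in (1,\tau)$ iff $1+1/n < \tau < 1+1/m$, and the resulting value of $s$ — is exactly right.

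The genuine gap is the coverage step. You propose to establish full measure of $\limsup \bigcup_{\bp \text{ admissible}} B(\bp/a_0, a_0^{-\eta})$ by (i) counting admissible $\bp$ via an equidistribution/residue-class heuristic to get $\#\{\text{admissible }\bp : \bp/a_0 \in B\} \asymp \mu_{p,d}(B)\, a_0^{d+m(1-\tau)}$, and then (ii) a quasi-independence estimate of Borel--Cantelli type to upgrade divergence to full measure. Both (i) and (ii) are nontrivial: (i) is precisely the problem of counting rational points close to a $p$-adic manifold, which is currently open in any useful generality (the paper explicitly remarks that no Huxley-type estimate exists in the $p$-adic setting); and (ii) requires controlling overlaps between balls whose centres are constrained to lie near the manifold, which is far from the quasi-independence used for Theorem~\ref{wkpadic}, where the centres range over \emph{all} rationals with denominator $a_0$. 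You flag this as "the main obstacle," but it is not an obstacle that is then resolved — it is the step the paper handles by an entirely different mechanism, and it is not clear your route can be made to work without importing further hypotheses on $\bff$.

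What the paper actually does is bypass counting altogether. Lemma~\ref{mink} (a $p$-adic Minkowski/pigeonhole lemma for linear forms) feeds into Theorem~\ref{diri}, a Dirichlet-style statement: for \emph{every} $\bx$ at which $\bff$ is DQE, and every large $H$, there is an integer vector $(a_0,\dots,a_n)$ with $|a_i| \leq p^{-k}H$, $(a_0,p)=1$, such that $\bp/a_0 := (a_1/a_0,\dots,a_d/a_0)$ is within $p$-adic distance $\ll H^{-v_i}$ of $\bx$ while $|f_j(\bp/a_0) - a_{d+j}/a_0|_p < (p^{-k}H)^{-\tau_j}$. The DQE expansion is absorbed \emph{inside} Theorem~\ref{diri}: the linear forms fed to Lemma~\ref{mink} are the first-order Taylor approximations of the $f_j$, and the quadratic error is small enough precisely because $2v_{\min} > \tau_{\max}$ (which is where $\tau < 1+1/m$ and the $\tau_j>1$ constraints are used). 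Corollaries~\ref{coldiri}--\ref{coldiri+} then give, via Lemma~\ref{measure_unchanged_rectangles}, that the $\limsup$ of the $h^{-v}$-balls has full measure in $\U$ \emph{for any} fixed constant $\delta$ — no counting, no quasi-independence. This is the content that makes the argument go, and it is absent from your proposal. The remaining pieces (Proposition~\ref{prop0} for the containment, and Theorem~\ref{general_MTP} for the transference) are as you describe, modulo a minor point: the Lipschitz hypothesis is used in Proposition~\ref{prop0} because the horizontal and vertical exponents coincide ($\tau_1=\dots=\tau_n=\tau$, the equality case~\eqref{eqn49}), so the derivative-control step needs a uniform constant; your "make DQE constants uniform" phrasing is in the right spirit but the precise role is the Lipschitz constant in the chain of ultrametric inequalities, not DQE per se.
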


\begin{theorem} \label{lowerbnd1}
Let $\bff$, $\U$ and $\ff$ be as in Theorem~\ref{lowerbnd} and additionally assume that $d=1$ and so $m=n-1$.
Suppose that $\btau=(\tau_{1},\tau_{2}, \dots, \tau_{n})\in \R^{n}_{+}$ satisfies the conditions
\begin{equation*}
\tilde\tau:=\sum_{j=2}^{n} \tau_{j} < n, \quad \quad \tau_{1} \geq \max_{2 \leq i \leq n}\left\{ \tau_{i}, n+1-\tilde\tau \right\} \quad \text{and} \quad \tau_{i}>1 \, \, \,  \text{for } \, \, 2 \leq i \leq n.
\end{equation*}
 Then
\begin{equation}
	\label{eqn:lwrbnd}
\dim \ff^{-1}\left(\Wp_n(\btau)\right) \geq s:= \frac{n+1+\sum_{j=2}^{n}(\tau_{1}-\tau_{j})}{\tau_{1}}-(n-1)=\frac{n+1-\tilde\tau}{\tau_1}.
\end{equation}
Furthermore, if $\bff$ is Lipschitz on $\U$ then for any ball $B\subset\U$
\begin{equation}\label{vb1+3}
\ha^{s}\left(B\cap \ff^{-1}\left(\Wp_n(\btau)\right)\right)=\ha^{s}(B).
\end{equation}
\end{theorem}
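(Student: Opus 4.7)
The plan is to apply the Mass Transference Principle (MTP) of \cite{BV06} directly on the one-dimensional base $\U \subset \Zp$. The key structural observation is that the target dimension rewrites cleanly as $s = \tau_1^*/\tau_1$, where $\tau_1^* := n+1-\tilde\tau$ is the ``Dirichlet-critical'' first-coordinate exponent: with $\btau^* := (\tau_1^*,\tau_2,\dots,\tau_n)$ one has $\sum_i\tau_i^* = n+1$, so the $p$-adic Dirichlet theorem gives $\Wp_n(\btau^*) = \Zp^n$ (see the remark following Theorem~\ref{weighted_jb}). The hypotheses of the theorem guarantee $\tau_1^* > 1$, $\tau_1 \geq \tau_1^*$, and $\tau_1 \geq \tau_j$ for $j \geq 2$; these are exactly the inequalities needed below.

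The first step is to set up a sub-ball inclusion via \DQE. I would call a tuple $(a_0, a_1, \dots, a_n) \in \Z^{n+1}$ with $|a_i| \leq a_0$ \emph{admissible} if $|f_{j-1}(a_1/a_0) - a_j/a_0|_p < a_0^{-\tau_j}$ for each $j = 2, \dots, n$. If such a tuple is admissible and $x \in \Zp$ satisfies $|x - a_1/a_0|_p < c\, a_0^{-\tau_1}$ for a sufficiently small $c > 0$ (depending on local bounds on the partial derivatives of $\bff$ and on the \DQE{} constant), then the \DQE{} estimate \eqref{eqn007} yields $|f_{j-1}(x) - f_{j-1}(a_1/a_0)|_p \lesssim a_0^{-\tau_1}$; combined with admissibility, the ultrametric inequality, and $\tau_1 \geq \tau_j$, this gives $|f_{j-1}(x) - a_j/a_0|_p < a_0^{-\tau_j}$ for every $j \geq 2$. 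Consequently,
\[
B(a_1/a_0,\, c\, a_0^{-\tau_1}) \;\subset\; \ff^{-1}(\Ap_{a_0}(\btau)).
\]

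The main technical step is then a ubiquity statement: the larger balls $B(a_1/a_0, a_0^{-\tau_1^*})$, indexed by admissible tuples, cover $\U$ up to a $\mu_p$-null set. I would prove this by counting. For each $a_0$ coprime to $p$, the proportion of $a_1 \in [-a_0, a_0]$ for which a compatible $a_j$ exists is of order $a_0^{-(\tau_j-1)}$ (it is exactly here that the assumption $\tau_j > 1$ for each $j \geq 2$ is essential); combining these conditions across $j = 2, \dots, n$ and summing over $a_0 \leq Q$ produces $\asymp Q^{n+1-\tilde\tau} = Q^{\tau_1^*}$ admissible tuples, while each large ball has Haar measure $\asymp Q^{-\tau_1^*}$, so the total measure at each dyadic scale is of order $1$, matching what is required for ubiquity. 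Making this heuristic rigorous calls for a $p$-adic equidistribution result for the values $\{f_{j-1}(a_1/a_0)\}$ modulo $p^M$ (with $p^M \asymp a_0^{\tau_j}$) as $a_1$ varies, together with the zero-one law for weighted $p$-adic approximations (underlying Theorem~\ref{DS_wkpadic}) to upgrade positive measure to full measure.

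Granted the ubiquity, the standard MTP of \cite{BV06} applied to the sub-balls from the second step (with dimension function $r \mapsto r^s$, $s = \tau_1^*/\tau_1$) yields $\ha^s\bigl(B \cap \ff^{-1}(\Wp_n(\btau))\bigr) = \ha^s(B)$ for every ball $B \subset \U$, giving both \eqref{vb1+3} and the dimension bound \eqref{eqn:lwrbnd}. The main obstacle is the ubiquity step: the counting and $p$-adic equidistribution argument is where all the delicate analysis lives, and it has no obvious shortcut. The conditions $\tilde\tau < n$ and $\tau_1 \geq n+1-\tilde\tau$ then play the clean respective roles of ensuring that the admissible-tuple count grows polynomially with $Q$ and that the sub-balls are genuinely smaller than the Dirichlet-level balls, so that the MTP actually shrinks and increases dimension.
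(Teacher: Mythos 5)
Your proposal correctly identifies the overall shape of the argument: use \DQE{} to produce a sub-ball of $\ff^{-1}(\Ap_{a_0}(\btau))$ of radius $\asymp a_0^{-\tau_1}$ around $a_1/a_0$, establish that the ``Dirichlet-critical'' balls of radius $\asymp a_0^{-\tau_1^*}$ with $\tau_1^* = n+1-\tilde\tau$ cover $\U$ up to a null set, and then invoke the classical Mass Transference Principle of \cite{BV06} with exponent $s = \tau_1^*/\tau_1$. The sub-ball inclusion step is essentially what the paper records as Proposition~\ref{prop0}, and your MTP deployment matches the paper's use of Theorem~\ref{general_MTP}. However, the way you propose to establish the ubiquity step is genuinely different from the paper and contains the real gap.

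The paper does \emph{not} count admissible tuples or invoke any equidistribution of the values $f_{j-1}(a_1/a_0) \bmod p^M$. Instead, for each individual non-rational $\bx \in \U$, it linearizes $\bff$ at $\bx$ using the \DQE{} estimate \eqref{eqn007}, applies the Minkowski/pigeonhole Lemma~\ref{mink} to the resulting system of linear forms, and produces the required tuple $(a_0,\dots,a_n)$ directly; this is Theorem~\ref{diri}, upgraded to infinitude (Corollary~\ref{coldiri}) and to the precise full-measure statement needed for the MTP (Corollary~\ref{coldiri+}, using Lemma~\ref{measure_unchanged_rectangles} to absorb the constant $\delta$). This pointwise pigeonhole argument gives a covering of all of $\U\setminus\Q_p^d$ outright, so there is nothing left to ``upgrade.'' Your route, by contrast, asks for (i) a lower bound with controlled quasi-independence on the count of rational points $(a_1/a_0,\dots,a_n/a_0)$ near the curve, which is precisely the $p$-adic analogue of the counting input the paper explicitly flags as unavailable (``there is currently no equivalent to Huxley's estimate\ldots in the $p$-adic setting''), and (ii) a zero-one law to pass from positive to full measure. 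Item (i) is a hard open problem for general \DQE{} (or even $C^2$) curves, and item (ii) is not supplied by Lemma~\ref{haynes_0-1}: that zero-one law governs the ambient set $\Wp_n'(\Psi) \subset \Zp^n$ and relies on the shift map $\pi_n$ acting on $\Zp^n$, whereas here you would need a zero-one law for the limsup set in the curve parameter $\U\subset\Zp$, which the paper does not claim and which would require a separate argument. So while the high-level scaffolding of your proof is right, the central ubiquity step as you describe it is unproven and, as stated, would require substantially more than the paper's hypotheses furnish; you should replace the counting/equidistribution sketch with the pigeonhole-on-linear-forms argument of Theorem~\ref{diri}.
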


\begin{theorem} \label{lowerbnd_manifold}
Let $\bff$, $\U$ and $\ff$ be as in Theorem~\ref{lowerbnd} and suppose that $\btau=(\tau_{1},\tau_{2}, \dots, \tau_{n})\in \R^{n}_{+}$ satisfies the conditions
\begin{equation*}
\tau_{i}>1 \quad (1 \leq i \leq n), \quad \quad
\sum_{i=1}^{m}\tau_{d+i}<m+1, \quad \quad \sum_{i=1}^{n} \tau_{i}>n+1, \, \, \text{ and }
\min_{1 \leq i \leq d} \tau_{i} \geq \max_{1 \leq j \leq m}\tau_{d+j}.
\end{equation*}
Then
\begin{equation}\label{vb1+4}
\dim \left(\ff^{-1}\left(\Wp_n(\btau)\right)\right) \geq \min_{1 \leq i \leq d}\left\{ \frac{n+1+\sum_{\tau_{j} < \tau_{i}}(\tau_{i}-\tau_{j})}{\tau_{i}}-m \right\}.
\end{equation}
\end{theorem}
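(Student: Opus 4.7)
My plan is to mimic the strategy behind Theorem~\ref{BLW} (in the real setting), but adapted to the ultrametric $p$-adic world: I will construct a $\limsup$ set in $\U\subset\Zp^d$ that sits inside $\ff^{-1}(\Wp_n(\btau))$ out of rational points lying close to the manifold $\Cf$, and then estimate its Hausdorff dimension from below using the Wang--Wu extension of the Mass Transference Principle \cite{WW19}.

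\emph{Step 1 (Rational points near $\Cf$).} I will first define, for each $a_0\in\N$ coprime to $p$, the set of "good" rational numerators
$$
S(a_0) = \bigl\{\ba\in\Z^d\cap[-a_0,a_0]^d : \forall\, j\le m\ \exists\, a_{d+j}\in\Z\cap[-a_0,a_0]\text{ with }|f_j(\ba/a_0)-a_{d+j}/a_0|_p<a_0^{-\tau_{d+j}}\bigr\}.
$$
The key counting estimate will be $|S(a_0)|\gg a_0^{n-\sum_{j\le m}\tau_{d+j}}$ for $a_0$ in a positive-density subset of $\N$. Heuristically, for fixed $\ba$ each $j$-th condition is satisfied with "probability" $\sim a_0^{1-\tau_{d+j}}$ (the relative size in $[-a_0,a_0]$ of a residue class modulo $p^{\lceil\tau_{d+j}\log_p a_0\rceil}$), so if the residues $a_0 f_j(\ba/a_0)$ are sufficiently equidistributed as $\ba$ varies, the joint count is as claimed. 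The hypothesis $\sum_{j\le m}\tau_{d+j}<m+1$ is exactly what keeps the exponent $n-\sum_{j\le m}\tau_{d+j}$ positive, and the DQE regularity of $\bff$ — effectively a linearisation of $f_j$ on small $p$-adic balls — is used to control the equidistribution.

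\emph{Step 2 (Base ubiquity and full measure).} Summing over $a_0\le Q$ yields $\gg Q^{\,n+1-\sum_{j\le m}\tau_{d+j}}$ good rational points, whose first-$d$-coordinate projections form a ubiquitous family in $\Zp^d$: every $\bx\in\U$ lies within $p$-adic distance $Q^{-\beta_0}$ of one, with $\beta_0:=(n+1-\sum_{j\le m}\tau_{d+j})/d$. A divergence Borel--Cantelli argument (in the spirit of Theorem~\ref{wkpadic}, noting that $\sum_{a_0}a_0^{n-\sum\tau_{d+j}}\cdot a_0^{-d\beta_0}=\sum a_0^{-1}=\infty$) then shows the base $\limsup$ set
$$
E_0 = \bigl\{\bx\in\U : |x_i-a_i/a_0|_p<a_0^{-\beta_0}\ \forall\, i\le d\text{ for i.m. }(a_0,\ba)\text{ with }\ba\in S(a_0)\bigr\}
$$
has full $\mu_{p,d}$-measure in $\U$.

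\emph{Step 3 (Wang--Wu MTP and DQE transfer).} I now invoke the Wang--Wu MTP from \cite{WW19}, in its ultrametric form, to replace the base balls $B(\ba/a_0,a_0^{-\beta_0})$ by rectangles of $p$-adic dimensions $a_0^{-\tau_1}\times\cdots\times a_0^{-\tau_d}$. The resulting $\limsup$ set $E_1\subset\U$ satisfies
$$
\dim E_1\ \ge\ \min_{k\le d}\Bigl\{\frac{d\beta_0+\sum_{i\le d,\,\tau_i<\tau_k}(\tau_k-\tau_i)}{\tau_k}\Bigr\}.
$$
To see $E_1\subset\ff^{-1}(\Wp_n(\btau))$, I apply the DQE inequality \eqref{eqn007} at $\ba/a_0$, obtaining
$$
|f_j(\bx)-f_j(\ba/a_0)|_p\ \le\ \max_{i\le d}\bigl(|\partial_i f_j|_p\,a_0^{-\tau_i}\bigr)+C\max_{i\le d}a_0^{-2\tau_i}\ <\ a_0^{-\tau_{d+j}}
$$
for $a_0$ large, where the final inequality uses $\tau_i\ge\tau_{d+j}$ and $\tau_i>1$ to absorb the derivative and quadratic-error constants. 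Combining this via the ultrametric inequality with $|f_j(\ba/a_0)-a_{d+j}/a_0|_p<a_0^{-\tau_{d+j}}$ (the defining property of $S(a_0)$) yields $|f_j(\bx)-a_{d+j}/a_0|_p<a_0^{-\tau_{d+j}}$, as required. Finally, the hypothesis $\min_{i\le d}\tau_i\ge\max_{j\le m}\tau_{d+j}$ lets me rewrite $d\beta_0+\sum_{i\le d,\,\tau_i<\tau_k}(\tau_k-\tau_i)=n+1-m\tau_k+\sum_{j\le n,\,\tau_j<\tau_k}(\tau_k-\tau_j)$, converting the Wang--Wu bound into the expression of the theorem.

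\emph{Main obstacle.} The substantive difficulty is the equidistribution/counting in Step~1: although the DQE condition is weaker than $C^2$, it should still afford enough control on $a_0 f_j(\ba/a_0)$ modulo high powers of $p$ to give the desired lower bound on $|S(a_0)|$, but the precise argument — likely proceeding by linearisation of each $f_j$ on small $p$-adic balls and a pigeonhole over integer translates — needs care. The Wang--Wu step and the DQE transfer are essentially routine once Step~1 is in place, following the blueprint of \cite{BLW20}.
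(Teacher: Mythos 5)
Your plan is a genuinely different route from the paper's, and Step~1 contains a gap that the paper's argument is specifically designed to sidestep. You propose to \emph{count} rational points near the manifold, obtaining $|S(a_0)|\gg a_0^{\,n-\sum_{j\le m}\tau_{d+j}}$ via an equidistribution heuristic for $a_0 f_j(\ba/a_0)$ modulo high powers of $p$, and then build local ubiquity from that count. But the \DQE{} hypothesis imposes no arithmetic regularity whatsoever on the values $f_j(\ba/a_0)$: it is satisfied, for instance, by an arbitrary $\Qp$-linear or constant map, and more generally by highly degenerate maps for which no equidistribution of $a_0 f_j(\ba/a_0)\bmod p^\ell$ need hold. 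For such maps the lower bound on $|S(a_0)|$ for individual $a_0$ (or a positive-density set of $a_0$) simply isn't available; counting rational points near manifolds is exactly the hard problem the paper's remarks emphasise has no $p$-adic analogue of Huxley's estimate. Step~2 also conflates positive measure with full measure: the quasi-independence Borel--Cantelli lemma (Lemma~\ref{borel_cantelli_divergence}) only yields positive measure, and you would still need the Haar-measure zero--one law (Lemma~\ref{haynes_0-1}) or the inflation lemma (Lemma~\ref{measure_unchanged_rectangles}) to upgrade it.

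The paper avoids both issues by proving a \emph{pointwise} Dirichlet-type theorem (Theorem~\ref{diri}): fix $\bx$ at which $\bff$ is \DQE, linearise each $f_j$ at $\bx$, and apply the $p$-adic Minkowski pigeonhole lemma (Lemma~\ref{mink}) to the resulting system of linear forms, then use the quadratic-error bound \eqref{eqn007} to show the nonlinear error is absorbed into the target exponents $\tau_{d+j}$. This gives, for \emph{every} such $\bx$, infinitely many good rationals $(a_0,\dots,a_n)$, with a constant depending on $\bx$ through $\lambda$ in \eqref{first_order_constant}; Lemma~\ref{measure_unchanged_rectangles} then removes that dependence to get the full-measure statement of Corollary~\ref{coldiri+} with a \emph{uniform} constant. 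Feeding this into the Wang--Wu MTP \emph{without} ubiquity (Theorem~\ref{MTPRR_full_measure}) together with the inclusion Proposition~\ref{prop0} yields \eqref{vb1+4}. Your Step~3 (the DQE transfer and the final arithmetic rearrangement of the Wang--Wu exponent) is essentially identical to the paper's Proposition~\ref{prop0} and the case analysis at the end of the proof, so the discrepancy is entirely in Steps~1--2: you should replace the counting/ubiquity programme by the pointwise linearise-and-pigeonhole argument of Theorem~\ref{diri}.
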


\begin{remark} \rm
Note that the dimension results of Theorems~\ref{lowerbnd}--\ref{lowerbnd1} are contained within Theorem~\ref{lowerbnd_manifold}. However, due to the method of proof we are not able to obtain the Hausdorff measure result in Theorem \ref{lowerbnd_manifold}. Also note that the statements remain true if the assumptions imposed on $\bff$ are imposed on a sufficiently small ball $B\subset\U$ instead of $\U$.
\end{remark}

\begin{remark}\rm
The assumption that the approximations over the independent variables ($\tau_1$ in Theorem \ref{lowerbnd1} and $\tau_{1}, \dots , \tau_{d}$ in Theorem \ref{lowerbnd_manifold}) are larger than the approximations over the dependent variables is merely technical. Observe that this condition is not needed amongst the approximations over each respective variable, since we may permute the variables to obtain the desired ordering.
However, the other requirements placed on $\btau$ are necessary to allow the result to hold for as general set of manifolds as possible. In particular, the conditions that $\sum_{i=1}^{m} \tau_{d+i}<m+1$ and $\tau_{i}>1$ for $1 \leq i \leq n$ ensure that even if the manifold is a hyperplane passing through badly approximable points we will still have an infinite number of rational approximations. If these conditions do not hold a counterexample can be readily obtained on modifying the example of Remark~3 in \cite{BLVV17}. It is also easy to see that the lower bound $\tau_{1} \geq n+1-\tilde\tau$ is necessary for otherwise \eqref{eqn:lwrbnd} would be false.
The upper bound $\tilde\tau<n$ on $\tilde\tau$ can likely be improved, however this will require imposing additional conditions on the curves such as non-degeneracy (meaning $1,x,f_1(x),\dots,f_{n-1}(x)$ are linearly independent over $\Z_p$ in the case these are analytic/Mahler's normal functions), and will require a different approach such as that of \cite{B12}. We plan to address the problem for non-degenerate curves separately in a subsequent publication.
\end{remark}

\begin{remark}\rm
We expect that the lower bound of Theorem~\ref{lowerbnd}-\ref{lowerbnd_manifold} is sharp and each dimension result should indeed be equality at least for non-degenerate curves.
Obtaining the upper bounds represents a challenging open problem even in dimension 2. We would like to stress that there is currently no equivalent to Huxley's estimate \cite{H96} in the $p$-adic setting, let alone the sharper Vaughan-Velani result \cite{VV06}.
\end{remark}

%
%

\section{Auxiliary concepts and results}
\label{sect:aux}

Before giving the proofs of Theorems~\ref{wkpadic}, ~\ref{weighted_jb} and ~\ref{lowerbnd}-\ref{lowerbnd_manifold}, we collect together some auxiliary results and concepts which we will need.
We begin by recalling the Borel-Cantelli lemma \cite{B09}, which can be found in numerous publications and texts on probability theory.

\begin{lemma} \label{borel_cantelli_convergence}
Let $(\Omega, \mathcal{A},  \mu)$, be a measure space and $(E_{i})_{i=1}^\infty$ is a sequence of $\mu$-measurable subsets in $\Omega$ satisfying
\begin{equation*}
\sum_{i=1}^{\infty} \mu(E_{i})< \infty.
\end{equation*}
Then,
\begin{equation*}
\mu \left(\limsup_{i \to \infty}E_{i} \right)=0.
\end{equation*}
\end{lemma}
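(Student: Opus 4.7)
The statement is the classical convergence half of the Borel--Cantelli lemma, and my plan follows the textbook route with no real surprises. The central identity to exploit is
\[
\limsup_{i\to\infty} E_i \;=\; \bigcap_{N=1}^{\infty}\bigcup_{i=N}^{\infty} E_i,
\]
which rewrites the limsup as a decreasing intersection of tails of the union. This gives a handle on $\mu(\limsup E_i)$ via monotonicity of the measure: for every fixed $N$,
\[
\mu\!\left(\limsup_{i\to\infty} E_i\right) \;\le\; \mu\!\left(\bigcup_{i=N}^{\infty} E_i\right) \;\le\; \sum_{i=N}^{\infty} \mu(E_i),
\]
where the last inequality is countable subadditivity of $\mu$.

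The second step is to let $N\to\infty$. Here I would use the hypothesis $\sum_{i=1}^\infty \mu(E_i) < \infty$, which forces the tails $\sum_{i=N}^\infty \mu(E_i)$ to tend to $0$ as $N\to\infty$ (this is just the Cauchy criterion for convergent series of nonnegative terms). Combined with the bound above, this yields $\mu(\limsup E_i) \le 0$, and hence equality since $\mu$ is a nonnegative measure.

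A small bookkeeping remark: one should observe at the outset that $\limsup_{i\to\infty} E_i$ is indeed $\mu$-measurable, since it is a countable intersection of countable unions of sets in $\mathcal{A}$, which is closed under both operations. Beyond this, the argument is mechanical; there is no genuine obstacle, which is why the lemma is invoked as a black box in most of the literature and can be cited here in the same spirit.
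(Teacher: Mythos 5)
Your proof is correct and is the standard textbook argument; the paper itself does not prove this lemma but simply cites it as a classical fact, so there is nothing to compare against beyond noting that your argument is exactly the one found in the referenced literature.
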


We will also use the following converse of the Borel-Cantelli lemma, see \cite[p.\,165]{S67} or \cite[Chap.1,\S3]{S79}, which was first established in a slightly weaker form by Erdos and Renyi \cite{ER59} and for arbitrary probability spaces by Kochen and Stone \cite{KS64}.

\begin{lemma}\label{borel_cantelli_divergence}
Let $(\Omega, \mathcal{A},  \mu)$ be a measure space with
$\mu(\Omega) <\infty$.  Suppose that $(E_{i})_{i=1}^\infty$ is a family of $\mu$-measurable subsets in $\Omega$ such that
\begin{equation*}
\sum_{i=1}^{\infty} \mu(E_{i})= \infty.
\end{equation*}
Then,
\begin{equation} \label{quasi_independence}
\mu \left( \limsup_{n \rightarrow \infty} E_{i} \right) \geq \limsup_{n \rightarrow \infty} \frac{\Big(\sum_{i=1}^{n} \mu(E_{i}) \Big)^{2}}{\sum_{i,j=1}^{n} \mu\big( E_{i} \cap E_{j}\big)}.
\end{equation}
\end{lemma}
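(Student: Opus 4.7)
The plan is to use the classical second-moment method. Fix $1\le n\le N$ and consider the indicator sum $S_{n,N}:=\sum_{i=n}^{N}\mathbf{1}_{E_i}$. Since $S_{n,N}\mathbf{1}_{\{S_{n,N}>0\}}=S_{n,N}$, the Cauchy--Schwarz inequality gives
\[
\Big(\int_\Omega S_{n,N}\,d\mu\Big)^{2}\;\le\;\mu(\{S_{n,N}>0\})\;\int_\Omega S_{n,N}^{2}\,d\mu,
\]
which rearranges to the finite Chung--Erd\H{o}s-type bound
\[
\mu\!\left(\bigcup_{i=n}^{N}E_i\right)\;\ge\;\frac{\big(\sum_{i=n}^{N}\mu(E_i)\big)^{2}}{\sum_{i,j=n}^{N}\mu(E_i\cap E_j)}.
\]
This finite inequality will be the engine of the whole proof; the remaining work is to transfer it to the limsup set.

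Next, I would let $N\to\infty$ with $n$ fixed. Continuity of $\mu$ from below applied to the increasing union $\bigcup_{i=n}^{N}E_i$ yields
\[
\mu\!\left(\bigcup_{i\ge n}E_i\right)\;\ge\;\limsup_{N\to\infty}\frac{\big(\sum_{i=n}^{N}\mu(E_i)\big)^{2}}{\sum_{i,j=n}^{N}\mu(E_i\cap E_j)}.
\]
I then need to argue that the limsup on the right coincides with the limsup appearing in \eqref{quasi_independence}, i.e.\ that shifting the lower summation index from $1$ to $n$ is asymptotically harmless. The key observations are that $\sum_{i=1}^{n-1}\mu(E_i)$ is a fixed constant (bounded by $(n-1)\mu(\Omega)$) while $\sum_{i=1}^{N}\mu(E_i)\to\infty$ by hypothesis, and that
\[
0\le \sum_{i,j=1}^{N}\mu(E_i\cap E_j)-\sum_{i,j=n}^{N}\mu(E_i\cap E_j)\le 2(n-1)\sum_{j=1}^{N}\mu(E_j),
\]
using $\mu(E_i\cap E_j)\le \mu(E_j)$. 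Along any subsequence on which the ratio in \eqref{quasi_independence} tends to a positive value $L$, the full double sum is of order $\big(\sum_{i=1}^{N}\mu(E_i)\big)^{2}/L$, so the defect above is of strictly lower order and therefore does not affect the limsup.

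Finally, I would send $n\to\infty$. Here the hypothesis $\mu(\Omega)<\infty$ is essential: it permits continuity of $\mu$ from above on the decreasing family $\bigcup_{i\ge n}E_i$, yielding
\[
\mu\!\left(\limsup_{i\to\infty}E_i\right)=\lim_{n\to\infty}\mu\!\left(\bigcup_{i\ge n}E_i\right)\;\ge\;\limsup_{N\to\infty}\frac{\big(\sum_{i=1}^{N}\mu(E_i)\big)^{2}}{\sum_{i,j=1}^{N}\mu(E_i\cap E_j)},
\]
which is exactly \eqref{quasi_independence}. The main obstacle I anticipate is the middle step, namely the careful verification that truncating the lower summation index leaves the limsup unchanged; this is precisely where the two hypotheses, divergence of $\sum\mu(E_i)$ and finiteness of $\mu(\Omega)$, must be used in tandem. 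The remaining ingredients---Cauchy--Schwarz and the continuity properties of $\mu$---are entirely standard.
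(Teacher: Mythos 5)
Your argument is correct, and it is the standard second-moment proof of the Kochen--Stone lemma via the Chung--Erd\H{o}s inequality. Note, however, that the paper does not actually prove this statement: it cites it as a known result (Erd\H{o}s--R\'enyi, Kochen--Stone, and the exposition in Sprind\v{z}uk's books), so there is no in-paper proof to compare against. Two small points worth making explicit in your middle step: the ratios $\big(\sum_{i=1}^{N}\mu(E_i)\big)^{2}/\sum_{i,j=1}^{N}\mu(E_i\cap E_j)$ are a priori bounded above by $\mu(\Omega)$ (by the finite Chung--Erd\H{o}s inequality you already derived), so the limsup in \eqref{quasi_independence} is finite and your ``pick a subsequence along which the ratio converges to a positive $L$'' step is legitimate; and the reason the defect $2(n-1)\sum_{j\le N}\mu(E_j)$ is of lower order than $\sum_{i,j\le N}\mu(E_i\cap E_j)$ is precisely that the latter is $\gtrsim \big(\sum_{i\le N}\mu(E_i)\big)^{2}/\mu(\Omega)$ while $\sum_{i\le N}\mu(E_i)\to\infty$. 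With those remarks spelled out, the proof is complete.
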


The following lemma is a variant of Minkowski's Theorem for systems of linear forms in the $p$-adic case.

 \begin{lemma} \label{mink}
Let $L_{i}(\bx)$, with $i=1, \dots , n$, be linear forms in $\bx=(x_{0},x_{1}, \dots , x_{n})$ with $p$-adic integer coefficients.  Let $\btau=(\tau_{1},\dots,\tau_n) \in \R_{+}^n$ satisfy $\sum_{i=1}^{n} \tau_{i}=n+1$ and $\bsigma=(\sigma_{1},\dots,\sigma_n) \in \R^n$ satisfy $\sum_{i=1}^{n} \sigma_{i}=n$. Then there exists $H_{\bsigma}>0$ such that for all integers $H_0,\dots,H_n\ge1$ such that $T^{n+1}:=(H_0+1)\cdots (H_n+1)\ge H_{\bsigma}$ there exists a non-zero rational integer vector $\bx=(x_{0},x_{1}, \dots , x_{n})$ satisfying
\begin{equation} \label{hbound}
|x_{i}| \leq H_i\qquad\text{for all }0 \leq i \leq n
\end{equation}
and
\begin{equation}
	\label{eqn:sys}
|L_{i}(\bx)|_{p} \le p^{\sigma_i}T^{-\tau_{i}} \;\;\; \text{ for all } 1\le i\le n\,.
\end{equation}
\end{lemma}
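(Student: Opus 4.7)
The plan is to prove this $p$-adic analogue of Minkowski's linear forms theorem via a direct Dirichlet pigeonhole (box principle) argument applied to integer vectors in the box $\{\bx\in\Z^{n+1}:0\le x_j\le H_j\}$, reducing the values of the forms $L_i$ modulo suitable powers of $p$.

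First, I would choose $H_{\bsigma}$ large enough (depending only on $\btau$ and $\bsigma$) to ensure that whenever $T^{n+1}\ge H_{\bsigma}$, each quantity $\tau_i\log_p T-\sigma_i$ is strictly positive. Define integers
\[
k_i:=\lceil \tau_i\log_p T-\sigma_i\rceil\ge 1,
\]
so that $p^{-k_i}\le p^{\sigma_i}T^{-\tau_i}$. Any nonzero $\bx\in\Z^{n+1}$ satisfying $L_i(\bx)\equiv 0\pmod{p^{k_i}}$ for every $i$ will then automatically satisfy \eqref{eqn:sys}.

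Next, consider the reduction map
\[
\Phi:\set{\bx\in\Z^{n+1}:0\le x_j\le H_j}\longrightarrow \prod_{i=1}^{n}\Z_p/p^{k_i}\Z_p,\qquad \bx\mapsto\bigl(L_i(\bx)\bmod p^{k_i}\bigr)_{i=1}^n,
\]
which is well defined since each $L_i$ has $p$-adic integer coefficients. The domain has exactly $\prod_{j=0}^{n}(H_j+1)=T^{n+1}$ elements, while the codomain has cardinality $p^{\sum_i k_i}$. The crucial count is
\[
\sum_{i=1}^{n}k_i\;<\;\sum_{i=1}^{n}(\tau_i\log_p T-\sigma_i+1)\;=\;(n+1)\log_p T-n+n\;=\;\log_p(T^{n+1}),
\]
using the \emph{strict} ceiling inequality $\lceil x\rceil<x+1$ together with $\sum_i\tau_i=n+1$ and $\sum_i\sigma_i=n$. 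Hence $p^{\sum k_i}<T^{n+1}$, so by pigeonhole applied to $\Phi$ there exist distinct $\bx',\bx''$ in the box with $\Phi(\bx')=\Phi(\bx'')$. Their difference $\bx:=\bx'-\bx''$ is a nonzero rational integer vector with $|x_j|\le H_j$ and $L_i(\bx)\equiv 0\pmod{p^{k_i}}$, giving $|L_i(\bx)|_p\le p^{-k_i}\le p^{\sigma_i}T^{-\tau_i}$, as required.

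The main technical subtlety, essentially the only substantive point, is extracting the \emph{strict} inequality $\sum k_i<(n+1)\log_p T$ rather than the weaker $\sum k_i\le (n+1)\log_p T$ that a naive bound $\lceil x\rceil\le x+1$ would yield; without strictness, pigeonhole fails when $T^{n+1}$ is itself a power of $p$. The strictness rests squarely on the equalities (not inequalities) $\sum\tau_i=n+1$, $\sum\sigma_i=n$ together with $\lceil x\rceil<x+1$ for all real $x$. The role of $H_{\bsigma}$ is otherwise just a convenience: it rules out degenerate cases where some constraint in \eqref{eqn:sys} is vacuous on $\Z_p$, ensuring that all $k_i$ are positive integers and the pigeonhole bookkeeping is uniform.
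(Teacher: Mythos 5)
Your argument is correct and follows the same Dirichlet pigeonhole strategy as the paper: reduce each $L_i$ modulo a suitable power of $p$, count lattice points in the box against residue classes, and take the difference of two colliding points. The only substantive difference lies in how the strict inequality needed for the pigeonhole is obtained. The paper perturbs $T$ to $T_\varepsilon=T-\varepsilon$, chooses $\delta_i$ by $p^{\delta_i-1}\le p^{-\sigma_i}T_\varepsilon^{\tau_i}<p^{\delta_i}$ so that $p^{\sum\delta_i}\le T_\varepsilon^{n+1}<T^{n+1}$, and finally recovers the stated non-strict bound by letting $\varepsilon\to0$ using the finiteness of candidate vectors. You instead take $k_i=\lceil\tau_i\log_p T-\sigma_i\rceil$ and observe that $\lceil x\rceil<x+1$ combined with the exact equalities $\sum\tau_i=n+1$ and $\sum\sigma_i=n$ already gives the strict bound $p^{\sum k_i}<T^{n+1}$ directly; this dispenses with the $\varepsilon$-limit and is a modest streamlining of the same proof.
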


\begin{proof}
The proof of this lemma is standard and uses Dirichlet's pigeonhole principle. Nevertheless, as it is used in the proofs of our main results, for completeness we provide its details here. To begin with, note that there are $T^{n+1}$ different rational integer vectors $ \bx=(x_{0}, \dots , x_{n})$ satisfying \eqref{hbound}, subject to the condition that $ x_i \geq 0 $ for each $i$. Let $\varepsilon\in(0,1)$ and $T_{\varepsilon}=T-\varepsilon$.
For each $i=1, \dots, n$ let $\delta_{i}$ be the unique integer such that
\begin{equation}\label{eqn-11}
p^{\delta_{i}-1} \leq p^{-\sigma_i}T_{\varepsilon}^{\tau_{i}} < p^{\delta_{i}}\,.
\end{equation}
Assuming $H_{\bsigma}$, which can be found explicitly, is sufficiently large we ensure that $\delta_i\ge0$ for each $i$. Clearly, for each $\bx\in\Z^n$ we have that $L(\bx):=(L_1(\bx),\dots,L_n(\bx))\in\Z_p^n$. Split $\Z_p^n$ into the subsets $S(\ba)$ given by
$$
S(\ba)=\prod_{i=1}^n \{x_i\in\Z_p:|x_i-a_i|_p\le p^{-\delta_i}\}
$$
for each $\ba=(a_1,\dots,a_n)\in\Z^n$ with $0\le a_i< p^{\delta_i}$. It is readily seen that the sets $S(\ba)$ are disjoint and cover the whole of $\Z_p^n$.
Furthermore, using the facts that $\sum_i\tau_i=n+1$, $\sum_i\sigma_i=n$ and \eqref{eqn-11}, we find that the number of sets $S(\ba)$ is
$$
p^{\sum_i\delta_i}\le T_{\varepsilon}^{\sum_i\tau_i}=T_{\varepsilon}^{n+1}<T^{n+1}\,.
$$
Hence, by the pigeonhole principle, at least one of the sets $S(\ba)$ contains  $L(\bx_i)$ for at least two distinct integer points $\bx_1$ and $\bx_2$ as specified above. Let $\bx=\bx_1-\bx_2$. Clearly, \eqref{hbound} is satisfied and $\bx$ is non-zero. Furthermore, for each $i=1,\dots,n$ we have that
 \begin{equation}\label{eqn011}
 |L_i(\bx)|=|L_{i}(\bx_{1}-\bx_{2})|_{p} = |L_{i}(\bx_{1})-L_i(\bx_{2})|_{p} \leq p^{-\delta_{i}} \stackrel{\eqref{eqn-11}}{<} p^{\sigma_i}T_{\varepsilon}^{-\tau_{i}}\,.
 \end{equation}
Since there are only finitely many integer vectors $ \bx=(x_{0}, \dots , x_{n})$ satisfying \eqref{hbound}, there is a non-zero $\bx$ subject to \eqref{hbound} satisfying \eqref{eqn011} for every $\varepsilon\in(0,1)$. Letting $\varepsilon\to0$ verifies \eqref{eqn:sys} and completes the proof.
\end{proof}

It is well known that the Hausdorff dimension is preserved by bi-Lipschitz mappings. We now state this formally in relation to $\Z_p^n$ for future reference, but omit the proof as it is a very well known fact. For instance, in the Euclidean case this can be found in \cite{F14}. In what follows, given a vector $\bx=(x_{1}, \dots, x_{n})\in\Z_p^n$, we define $|\bx|_{p}:=\max \big\{ |x_{i}|_{p}:1\le i\le n \big\}$.

\begin{proposition} \label{falc}
Let $F \subset \Zp^{n}$ and $g: F \rightarrow \Zp^{m}$ be a Lipschitz map, that is
$
|g(x)-g(y)|_{p} \leq c |x-y|_{p} \, \text{ for } \, x,y \in F
$
for some constant $c>0$. Then, for each $s>0$
\begin{equation*}
\ha^{s}(g(F)) \leq c^{s} \ha^{s}(F)
\end{equation*}
and so $\dim g(F)\le\dim F$. In particular, if $F$ is a bi-Lipschitz map, then
$
\dim g(F) = \dim F.
$
\end{proposition}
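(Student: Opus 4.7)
The plan is to follow the standard proof from the Euclidean setting (as in \cite{F14}), using the ultrametric property of $|\cdot|_p$ to keep the argument clean. First I would fix $\rho>0$ and take an arbitrary $\rho$-cover $\{B_i\}$ of $F$ by balls of radii $r_i\le\rho$. By the ultrametric inequality, any two points $x,y\in B_i$ satisfy $|x-y|_p\le r_i$, so by the Lipschitz hypothesis $|g(x)-g(y)|_p\le c\,r_i$ for any $x,y\in F\cap B_i$. Consequently, picking any $x_i\in F\cap B_i$ (when this intersection is nonempty), the set $g(F\cap B_i)$ is contained in the ball $B_i':=B(g(x_i),c\,r_i)\subset\Z_p^{m}$ of radius $cr_i$.

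Next I would observe that $\{B_i'\}_{i}$ is a $c\rho$-cover of $g(F)$, since $g(F)=\bigcup_i g(F\cap B_i)\subseteq\bigcup_i B_i'$ and $c r_i\le c\rho$. Summing the $s$-th powers of the radii gives
\begin{equation*}
\sum_{i}(c r_i)^{s}=c^{s}\sum_{i}r_i^{s}\,.
\end{equation*}
Taking the infimum over all $\rho$-covers of $F$ yields $\ha^{s}_{c\rho}(g(F))\le c^{s}\ha^{s}_{\rho}(F)$. Letting $\rho\to 0^{+}$ (which forces $c\rho\to 0^{+}$ as well since $c$ is fixed) produces the desired inequality $\ha^{s}(g(F))\le c^{s}\ha^{s}(F)$.

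The dimension bound $\dim g(F)\le\dim F$ then follows immediately from the definition: for any $s>\dim F$ we have $\ha^{s}(F)=0$, hence $\ha^{s}(g(F))=0$, so $\dim g(F)\le s$, and taking the infimum over such $s$ gives the claim. Finally, when $g$ is bi-Lipschitz its inverse $g^{-1}:g(F)\to F$ is Lipschitz with some constant $c'>0$, and applying the first part of the proposition to $g^{-1}$ gives $\dim F\le\dim g(F)$, so equality holds.

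There is essentially no obstacle here: the only $p$-adic-specific point is the ultrametric observation that any two points of a ball of radius $r_i$ are within distance $r_i$ of each other, which is actually a simplification compared with the Euclidean case where one must pass via the diameter and the factor of $2$. The argument is completely formal otherwise.
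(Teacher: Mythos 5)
Your argument is correct and is exactly the standard covering argument that the paper itself declines to reproduce: the authors explicitly state that they ``omit the proof as it is a very well known fact'' and point to Falconer \cite{F14} for the Euclidean version, which is the proof you have transcribed. The $p$-adic observation you make --- that in an ultrametric space any two points of a ball of radius $r_i$ are within $r_i$ of one another, so there is no factor of $2$ from the diameter --- is the right one, and your treatment of empty intersections $F\cap B_i$ and of the bi-Lipschitz case (reading the paper's evident typo ``if $F$ is a bi-Lipschitz map'' as ``if $g$ is bi-Lipschitz'') is exactly as intended. Nothing to add.
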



\subsection{Mass Transference Principles}

For the proofs of Theorems~\ref{weighted_jb} and ~\ref{lowerbnd} we will use the Mass Transference Principle (MTP) of \cite{BV06} and some of its recent generalisations. The development of the MTP in \cite{BV06} has prompted a significant amount of subsequent research  and is now part of the standard machinery for studying many problems in metric Diophantine approximation, see \cite{AT19} for a survey. The first generalisation of the MTP was for systems of linear forms established in \cite{BV06b}, which was further generalised in \cite{AB18}. Subsequently, Allen and Baker \cite{AB19} proved a general MTP for a wide variety of sets satisfying certain conditions, these sets included self similar sets and smooth compact manifolds. A generalisation of the MTP capable of dealing with problems on Diophantine approximation with weights was established by Wang, Wu and Xu \cite{WWX15}. More recently, Wang and Wu \cite{WW19} established a stronger and in a sense more versatile version of the MTP obtained in \cite{WWX15}, albeit this stronger version requires a ubiquity hypothesis similar to that of \cite{BDV06}. In this paper we will deploy this latest result of Wang and Wu to establish Theorem~\ref{weighted_jb}, in which context verifying the ubiquity hypothesis is relatively simple. Regarding Theorem~\ref{lowerbnd} we will utilise the original MTP of \cite{BV06}. We expect that on blending the techniques of this paper together with other variations of the MTP our ideas can be carried forwards to establish results similar to Theorem~\ref{weighted_jb} for systems of linear forms and Theorem~\ref{lowerbnd} to manifolds of arbitrary dimension. However, attaining optimal conditions on the exponents will likely require additional considerations.

In what follows, let $(X,d)$ be a locally compact metric space. A continuous function $g:(0,\infty) \to (0, \infty)$ is said to be \textit{doubling} if there exists a constant $\lambda>1$ such that for all $x>0$,
\begin{equation*}
g(2x) \leq \lambda g(x).
\end{equation*}
Suppose there exists constants $0 <c_{1}<1<c_{2}< \infty$ and $r_{0}>0$ such that
\begin{equation} \label{general_MTP_requirement}
c_{1}g(r(B)) \leq \ha^{g}(B) \leq c_{2}g(r(B)),
\end{equation}
for any ball $B=B(x,r)$ centred at $x \in X$ with radius $r(B)=r \leq r_{0}$. Then we will say that $(X, d)$ is \textit{$g$-Ahlfors regular}. Next, given a dimension function $f$ and a ball $B=B(x,r)$, define
\begin{equation}\label{eq15}
B^{f, g} = B\left(x,g^{-1}(f(r))\right).
\end{equation}
Note that the centre and radius of a ball may not be unique. Indeed, this is the case in $\Q_p$.
Thus, in \eqref{eq15} and elsewhere by a ball we understand the pair of its centre and radius. However, when using a ball in set theoretic expressions by a ball we will mean the corresponding set of points.
Also note that, by \eqref{eq15}, we have that $B^{g, g}=B$. Now we are ready to state the general MTP as given in ~\cite[Theorem~3]{BV06}, see also \cite[Theorem~1]{AB19}.

\begin{theorem}[General Mass Transference Principle] \label{general_MTP}
Let $(X,d)$ be a locally compact metric space, $g$ be a doubling dimension function. Suppose that $(X,d)$ is $g$-Ahlfors regular. Let $(B_{i})_{i \in \N}$ be a sequence of balls in $X$ with $r(B_{i}) \rightarrow 0$ as $i \rightarrow \infty$. Let $f$ be a dimension function such that $f(x)/g(x)$ is monotonic and suppose that for any ball $B \subset X$
\begin{equation*}
\ha^{g}\left( B \cap \limsup_{i \rightarrow \infty} B_{i}^{f,g} \right)=\ha^{g}(B).
\end{equation*}
Then, for any ball $B \subset X$
\begin{equation*}
\ha^{f}\left( B \cap \limsup_{i \rightarrow \infty} B_{i} \right)=\ha^{f}(B).
\end{equation*}
\end{theorem}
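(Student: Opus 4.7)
The plan is to establish the theorem via the classical Mass Distribution Principle route: inside any ball $B\subset X$ construct a Cantor-type subset $K\subseteq B\cap \limsup_{i\to\infty} B_i$ together with a probability measure $\mu$ supported on $K$ satisfying $\mu(U)\leq C\,f(r(U))$ for every sufficiently small ball $U$. This yields $\ha^f(B\cap\limsup_i B_i)\geq 1/C$, and a standard density argument combined with the $g$-Ahlfors regularity and the monotonicity of $f/g$ then upgrades this local bound to the claimed full-measure statement. We may assume $f(r)/g(r)\to 0$ as $r\to 0$, since otherwise $B_i$ and $B_i^{f,g}$ are comparable up to constants and the conclusion reduces to the hypothesis. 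Under this assumption $g^{-1}(f(r))\ll r$ for small $r$, so each $B_i^{f,g}$ sits as a small nested sub-ball of $B_i$.

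First I would construct inductively a nested family of finite, pairwise disjoint ball collections $(\mathcal{J}_k)_{k\geq 0}$ with $\mathcal{J}_0=\{B\}$. Given $\mathcal{J}_k$, the hypothesis $\ha^g(J\cap\limsup_i B_i^{f,g}) = \ha^g(J)$ applied to each $J\in\mathcal{J}_k$, together with a $g$-Vitali covering lemma valid in $g$-Ahlfors regular spaces, lets me extract finitely many pairwise disjoint balls $B_i^{f,g}\subseteq J$ of radii below a threshold $\rho_k\downarrow 0$, whose total $g$-measure is at least a fixed constant fraction of $\ha^g(J)$. Let $\mathcal{J}_{k+1}$ be the union of these selections over $J$. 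Since each chosen $B_i^{f,g}$ lies inside $B_i$, the compact set $K=\bigcap_k\bigcup_{J\in\mathcal{J}_k} J$ is contained in $\limsup_i B_i$. Distribute mass down the tree proportionally: if $J'\in\mathcal{J}_{k+1}$ has parent $J\in\mathcal{J}_k$, set
\[
\mu(J')=\mu(J)\cdot\frac{g(r(J'))}{\sum_{J''} g(r(J''))},
\]
the sum running over all children $J''$ of $J$. The key identity $g(r(J'))=f(r(B_i))$, immediate from the definition of $B_i^{f,g}$, is what translates the $g$-driven geometric construction into an $f$-driven mass distribution.

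Finally I would bound $\mu(U)$ for an arbitrary small ball $U$ by locating the scale $k$ at which $r(U)$ is comparable to the radii of balls in $\mathcal{J}_k$. The disjointness within each $\mathcal{J}_k$ combined with $g$-Ahlfors regularity controls how many level-$k$ balls $U$ can meet, and a scale-by-scale telescoping, using the doubling property of $g$ and the monotonicity of $f/g$ to convert between radii and $g$-measures at different scales, produces the desired inequality $\mu(U)\leq C\,f(r(U))$. The main obstacle is precisely this mass estimate at intermediate scales: one must balance the contribution from the boundedly many level-$k$ balls meeting $U$ against the accumulated mass of level-$(k+1)$ children that $U$ can touch, and this delicate bookkeeping is where the monotonicity of $f/g$ and the doubling of $g$ become indispensable. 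Once this bound is in place, the Mass Distribution Principle closes the argument.
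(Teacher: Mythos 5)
The paper quotes Theorem~\ref{general_MTP} directly from the literature (Theorem~3 of~\cite{BV06}; see also Theorem~1 of~\cite{AB19}) and does not supply a proof, so I am evaluating your outline against the Beresnevich--Velani argument. Your high-level strategy --- a Cantor-type construction closed out by the Mass Distribution Principle, with the identity $g(r(B_i^{f,g}))=f(r(B_i))$ doing the translation between the $\ha^g$-hypothesis and an $f$-calibrated mass --- is indeed the right shape. But your reduction is backwards, and the error propagates. Since $f/g$ is monotonic it has a limit in $[0,\infty]$. If $f/g\to 0$, then for any cover of $B$ by balls of radius $\le\rho$ one has $\sum f(r_i)=\sum(f/g)(r_i)\,g(r_i)\ll\sup_{r\le\rho}(f/g)(r)\cdot\ha^{g}(B)\to 0$, so $\ha^{f}(B)=0$ and the conclusion is vacuous; the nontrivial case is $f/g\to\infty$, where the reversed estimate gives $\ha^{f}(B)=\infty$. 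Crucially, when $f/g\to\infty$ we have $g^{-1}(f(r))>r$, so $B_i^{f,g}$ is the \emph{larger} ball: $B_i\subset B_i^{f,g}$, the opposite of the nesting you wrote. The level-$(k+1)$ nodes of the Cantor set must be the small balls $B_i$ (making $K\subset\limsup_i B_i$ automatic), chosen so that their enlarged shadows $B_i^{f,g}\subset J$ are pairwise disjoint and carry a definite fraction of $\ha^{g}(J)$; with your nesting and your choice of nodes, the mass estimate actually fails, because the mass on a node $J'=B_i^{f,g}$ is $\asymp f(r(B_i))$ while $r(B_i)\ge r(J')\approx r(U)$, so you only obtain $\mu(U)\lesssim f(r(B_i))\ge f(r(U))$, the wrong direction.

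The second gap is the closing ``density argument.'' In the nontrivial case $\ha^{f}(B)=\infty$, and there is no density-theoretic route from $\ha^{f}(B\cap\limsup_i B_i)>0$ to $\ha^{f}(B\cap\limsup_i B_i)=\infty$. What~\cite{BV06} actually does is parametrize the Cantor construction by a constant $\eta\ge 1$, producing for each $\eta$ a measure $\mu_\eta$ on a Cantor set $K_\eta\subset B\cap\limsup_i B_i$ with $\mu_\eta(U)\ll f(r(U))/\eta$ for all sufficiently small balls $U$; the Mass Distribution Principle then gives $\ha^{f}(B\cap\limsup_i B_i)\ge\ha^{f}(K_\eta)\gg\eta$, and letting $\eta\to\infty$ forces equality with $\ha^{f}(B)=\infty$. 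Achieving that uniform-in-$\eta$ control --- roughly, spreading mass over many more than the bare minimum of children at each level, with the surplus accounted against $\eta$ --- is the technical heart of the proof and cannot be replaced by a single fixed construction followed by a density upgrade.
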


Now let us turn our attention to the Mass Transference Principle from Rectangles to Rectangles (MTPRR) of Wang and Wu \cite{WW19}, which will be a vital component of our proof of Theorem~\ref{weighted_jb}. To begin with, we state the notion of local ubiquity for rectangles introduced in \cite{WW19}, which is a generalisation of the notion of local ubiquity for balls introduced in \cite{BDV06}. Fix an integer $n \geq 1$, and for each $1 \leq i \leq n$ let $(X_{i},|\cdot|_{i},m_i)$ be a bounded locally compact measure-metric space, where $|\cdot|_i$ denotes the metric and $m_{i}$ denotes a measure over $X_i$, which will be assume to be a $\delta_{i}$-Ahlfors regular probability measure. Consider the product space $(X,|\cdot|,m)$, where
\begin{equation*}
X=\prod_{i=1}^{n}X_{i}, \quad m=\prod_{i=1}^{n}m_{i}, \quad |\cdot|=\max_{1 \leq i \leq n}|\cdot|_{i}
\end{equation*}
are defined in the usual way.
In view of the application we have in mind, we will take $X_{i}=\Zp$, $m_{i}=\mu_{p}$ and $|\cdot|_i = | \cdot |_p$ for each $1 \leq i \leq n$. So $X=\Zp^{n}$, $m=\mu_{p,n}$, and $|\cdot|$ is the usual $\sup$ norm.
For any $x \in X$ and $r \in \R_{+}$ define the open ball
\begin{equation*}
B(x,r)=\left\{ y \in X: \max_{1 \leq i \leq n}|x_{i}-y_{i}|_{i}< r \right\}=\prod_{i=1}^{n}B_{i}(x_{i},r),
\end{equation*}
where $B_{i}$ are the usual open $r$-balls associated with the $i^{\text{th}}$ metric space $X_i$. Let $J$ be a countably infinite index set, and $\beta: J \to \R_{+}$, $\alpha \mapsto \beta_{\alpha}$ a positive function satisfying the condition that for any $N \in \N$
\begin{equation*}
\# \left\{ \alpha \in J: \beta_{\alpha} < N \right\} < \infty.
\end{equation*}
 Let $l_{n},u_{n}$ be two sequences in $\R_{+}$ such that $u_{n} \geq l_{n}$ with $l_{n} \to \infty$ as $n \to \infty$. Define
\begin{equation*}
J_{n}= \{ \alpha \in J: l_{n} \leq \beta_{\alpha} \leq u_{n} \}.
\end{equation*}
Let $\rho: \R_{+} \to \R_{+}$ be a non-increasing function such that $\rho(x) \to 0$ as $x\to \infty$. For each $1 \leq i \leq n$, let $( R_{\alpha,i})_{\alpha \in J}$ be a sequence of subsets in $X_{i}$.
The family of sets $( R_\alpha)_{\alpha\in J}$ where
\begin{equation*}
	R_{\alpha}=\prod_{i=1}^{n} R_{\alpha, i}, 
\end{equation*}
for each $ \alpha \in J$, are called \textit{resonant sets}.
For $\ba=(a_{1}, \dots, a_{n}) \in \R_{+}^{n}$ define
\begin{equation*}
\Delta(R_{\alpha},\rho(r)^{\ba})= \prod_{i=1}^{n}  \Delta'(R_{\alpha,i},\rho(r)^{a_{i}}),
\end{equation*}
where for some set $A\subset X_i$ and $b \in \R_{+}$
\begin{equation*}
\Delta'(A,b)= \bigcup_{a \in A}B(a,b)
\end{equation*}
is the union of balls in $X_i$ of radius $b$ centred at all possible points in $A$.

\begin{definition}[Local ubiquitous system of rectangles]\rm
Call the pair $\big((R_{\alpha})_{\alpha \in J}, \beta\big)$ {\em a local ubiquitous system of rectangles with respect to $(\rho,\ba)$}\/ if there exists a constant $c>0$ such that for any ball $B \subset X$
\begin{equation*}
\limsup_{n \to \infty} m \left( B \cap \bigcup_{\alpha \in J_{n}}\Delta(R_{\alpha}, \rho(u_{n})^{\ba}) \right) \geq c m(B).
\end{equation*}
\end{definition}
The second property needed to state the Wang-Wu theorem is a local scaling property, which was first introduced in \cite{AB19}, and which is a version of the intersection properties of \cite{BDV06}. In our setting the condition will be satisfied for $k=0$ and holds trivially. Nevertheless, we include the condition for the sake of completeness.

\begin{definition}[$k$-scaling property]\rm
Let $0 \leq k <1$ and $1 \leq i \leq n$. The sequence $\{R_{\alpha,i}\}_{\alpha \in J}$ has {\em $k$-scaling property}\/ if for any $\alpha \in J$, any ball $B(x_{i},r) \subset X_{i}$ with centre $x_{i} \in R_{\alpha,i}$, and $0 < \epsilon < r$ then
\begin{equation*}
c_{2}r^{\delta_{i}k}\epsilon^{\delta_{i}(1-k)} \leq m_{i} \left( B(x_{i},r) \cap \Delta(R_{\alpha,i},\epsilon) \right) \leq c_{3} r^{\delta_{i}k}\epsilon^{\delta_{i}(1-k)},
\end{equation*}
for some constants $c_{2},c_{3}>0$.
\end{definition}
Finally, for $\bt=(t_{1}, \dots, t_{n}) \in \R^{n}_{+}$, define
\begin{equation*}
W(\bt)= \limsup_{ \alpha \in J} \Delta(R_{\alpha},\rho(\beta_{\alpha})^{\ba+\bt}).
\end{equation*}
We now state the following theorems due to Wang and Wu \cite{WW19}.

\begin{theorem}[Mass Transference Principle from Rectangles to Rectangles with Ubiquity] \label{MTPRR}
Let $(X,|\cdot|,m)$ be a product space of $n$ bounded locally compact metric spaces $(X_{i},|\cdot|_{i},m_{i})$ with $m_{i}$ a $\delta_{i}$-Ahlfors probability measure, for $1 \leq i \leq n$. Let $(R_{\alpha})_{\alpha \in J}$ be a sequence of subsets contained in $X$ and assume that $((R_{\alpha})_{\alpha \in J}, \beta)$ is a local ubiquitous system of rectangles with respect to $(\rho, \ba)$ for some $\ba=(a_{1}, \dots, a_{n}) \in \R^{n}_{+}$, and that $(R_{\alpha})_{\alpha \in J}$ satisfies the $k$-scaling property. Then, for any $\bt=(t_{1}, \dots, t_{n}) \in \R^{n}_{+}$
\begin{equation*}
\dim W(\bt) \geq \min_{A_{i} \in A} \left\{ \sum_{j \in K_{1}} \delta_{j}+ \sum_{j \in K_{2}} \delta_{j}+ k \sum_{j \in K_{3}} \delta_{j}+(1-k) \frac{\sum_{j \in K_{3}}a_{j}\delta_{j}-\sum_{j \in K_{3}}t_{j}\delta_{j}}{A_{i}} \right\}=s,
\end{equation*}
where $A=\{ a_{i}, a_{i}+t_{i} , 1 \leq i \leq n \}$ and $K_{1},K_{2},K_{3}$ are a partition of $\{1, \dots, n\}$ defined as
\begin{equation*}
 K_{1}=\{ j:a_{j} \geq A_{i}\}, \quad K_{2}=\{j: a_{j}+t_{j} \leq A_{i} \} \backslash K_{1}, \quad K_{3}=\{1, \dots n\} \backslash (K_{1} \cup K_{2}).
 \end{equation*}
 Furthermore, for any ball $B \subset X$
  \begin{equation} \label{MTPRR_measure}
\ha^{s}(B \cap W(\bt))=\ha^{s}(B).
\end{equation}
 \end{theorem}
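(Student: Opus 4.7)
The strategy is the Cantor-construction plus mass-distribution-principle paradigm common to all Mass Transference results, adapted here to the rectangular geometry and to a general ubiquitous system rather than a covering by balls.

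\emph{Step 1 (scales and Cantor base).} Fix a ball $B \subset X$. By local ubiquity, for infinitely many $n$ the set $\bigcup_{\alpha \in J_n} \Delta(R_\alpha, \rho(u_n)^{\ba})$ has $m$-measure at least $c\, m(B)$ inside $B$. Extract a rapidly growing subsequence of scales $(n_\ell)_{\ell \geq 1}$ and, inside $B$, select a maximal disjoint collection of ``fat'' rectangles drawn from level $n_1$, then inside each of these select further fat rectangles from level $n_2$, and so on. Disjointness at each stage follows from a Vitali-type covering argument applied to the product metric, and proper nesting is arranged by taking $n_{\ell+1}$ large enough that the new fat rectangles sit strictly inside the previously selected cells.

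\emph{Step 2 (subdivision and counting).} At each stage $\ell$, partition every fat rectangle $\Delta(R_\alpha, \rho(u_{n_\ell})^{\ba})$ into axis-aligned sub-boxes of side $\rho(u_{n_\ell})^{A_i}$ in every direction $j$ with $a_j < A_i$, leaving the remaining directions intact. The associated slim rectangle $\Delta(R_\alpha, \rho(\beta_\alpha)^{\ba+\bt})$ meets many of these sub-boxes, and we retain exactly those as the stage-$(\ell+1)$ cells of a nested Cantor-type set $\mathcal{K} \subseteq B \cap W(\bt)$. The coordinatewise count splits along the trichotomy $K_1, K_2, K_3$: no subdivision in $K_1$ (direction already fine enough), one cell per fat rectangle in $K_2$ (slim side smaller than $\rho^{A_i}$), and a genuinely nontrivial count in $K_3$ which the $k$-scaling property converts into the measure estimate $r^{\delta_j k} \epsilon^{\delta_j (1-k)}$.

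\emph{Step 3 (mass distribution and conclusion).} Distribute unit mass uniformly over the stage-$\ell$ cells; the product structure of rectangles causes the $\mu$-measure of a ball of radius $r$ centred on $\mathcal{K}$ to factorise into contributions from $K_1$, $K_2$ and $K_3$, together matching the three summands defining $s$. Taking logarithms yields a local H\"older exponent of at least $s$, and the mass distribution principle gives $\ha^s(B \cap W(\bt)) > 0$. Because the same lower bound is available inside any smaller sub-ball with uniform constants coming from the Ahlfors regularity of each $m_i$, a standard density argument upgrades positivity to the full-measure identity \eqref{MTPRR_measure}.

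\emph{Main obstacle.} The crux is Step 2: one must prove the count of surviving sub-boxes is uniformly large in $\alpha$, and that the minimisation over $A_i \in A$ is genuinely attained. Varying $A_i$ trades off how many new cells one creates in $K_3$ directions against how finely one resolves $K_1$ directions, so only the optimal cut-off produces the stated exponent; verifying this combinatorial optimisation is the heart of the argument. A secondary difficulty arises when $k > 0$, since then the resonant sets $R_\alpha$ carry their own fractal structure in each coordinate and one must carefully balance the self-similar factor $r^{\delta_j k}$ against the approximation factor $\epsilon^{\delta_j (1-k)}$, which is the origin of the $(1-k)$-weighting in the dimension formula.
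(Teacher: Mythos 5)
The paper does not prove Theorem~\ref{MTPRR}; it is quoted, together with Theorem~\ref{MTPRR_full_measure}, as a result of Wang and Wu and cited to \cite{WW19}, with no proof given in this paper. So there is no in-paper argument to compare against, and what follows assesses your sketch against what a correct proof in the spirit of \cite{WW19} would need.

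Your broad plan (a Cantor construction seeded by the ubiquitous system, axis-by-axis subdivision at the reference scale $\rho^{A_i}$ organised around $K_1,K_2,K_3$, then a mass distribution principle) is indeed the right strategy, but there is a concrete error and two substantial gaps. The error: your gloss of $K_2$ is inverted. Since $\rho$ is non-increasing with $\rho(x)\to0$, for large argument $0<\rho<1$, so the defining condition $a_j+t_j\le A_i$ gives $\rho^{a_j+t_j}\ge\rho^{A_i}$: in a $K_2$ direction the \emph{slim} side is at least the reference scale, and a fat rectangle contributes on the order of $\rho^{a_j+t_j-A_i}\ge1$ reference boxes in that coordinate, not ``one cell.'' The one-box regime you describe is $K_3$, where $a_j<A_i<a_j+t_j$. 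Since the exponent $s$ is exactly a tally of which coordinates fall in which regime, this swap would produce the wrong formula. As for the gaps: first, you openly defer the uniform lower bound on the number of surviving sub-boxes and the verification that the minimum over $A_i\in A$ is the tight cut --- but that counting estimate is the proof, and without it the mass distribution has no quantitative content. Second, passing from $\ha^s>0$ to the full-measure equality \eqref{MTPRR_measure} is not a ``standard density argument'': $\ha^s$ at $s<\sum_{j}\delta_j$ is not $\sigma$-finite, so there is no Lebesgue-density dichotomy available. In the Beresnevich--Velani and Wang--Wu approach one builds, inside each ball $B$, a compact Cantor subset and a measure whose covering estimates give $\ha^s(B\cap W(\bt))\ge\ha^s(B)$ directly (typically by producing, for each $\eta>0$, a subset with $\ha^s$-content at least $\ha^g(B)/\eta$ and letting $\eta\to0$). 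The mass distribution principle yields positivity, not the equality in \eqref{MTPRR_measure}, and your Step~3 does not bridge that gap. Your proposal is a reasonable high-level roadmap but is not yet a proof.
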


 \begin{theorem}[Mass Transference Principle from Rectangles to Rectangles without Ubiquity] \label{MTPRR_full_measure} Suppose that each measure $m_{i}$ is $\delta_{i}$-Ahlfors regular and $R_{\alpha,i}$ has $k$-scaling property for each $\alpha \in J$ $(1 \leq i \leq n)$. Suppose
 \begin{equation*}
 m\left( \limsup_{\alpha \in J} \Delta(R_{\alpha}, \rho(\beta_{\alpha})^{\ba}) \right)=m(X).
 \end{equation*}
 Then
 \begin{equation*}
 \dim W(\bt) \geq s\,,
 \end{equation*}
 where $s$ is the same as in Theorem~\ref{MTPRR}.
 \end{theorem}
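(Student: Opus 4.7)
The plan is to reduce Theorem~\ref{MTPRR_full_measure} to the ubiquity version, Theorem~\ref{MTPRR}, by deriving local ubiquity from the full-measure hypothesis at the cost of an arbitrarily small perturbation $\bm\epsilon=(\epsilon,\dots,\epsilon)\in\R^{n}_{+}$ of the scaling vector $\ba$, which is then sent to zero. The target of the main construction is to show that $((R_{\alpha})_{\alpha\in J},\beta)$ is a local ubiquitous system of rectangles with respect to $(\rho,\ba-\bm\epsilon)$. Once this is achieved, Theorem~\ref{MTPRR} applied to the approximation vector $\bt+\bm\epsilon$ gives $\dim W(\bt)\ge s(\bm\epsilon)$, because $(\ba-\bm\epsilon)+(\bt+\bm\epsilon)=\ba+\bt$ and therefore the associated limsup set coincides with $W(\bt)$; letting $\epsilon\to 0^{+}$ and using continuity of the formula for $s$ in $(\ba,\bt)$ then yields the required bound.

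To build the candidate ubiquitous system, choose levels $N_{k}\to\infty$ with $\rho(N_{k+1})=\rho(N_{k})^{c}$ for $c:=\max_{1\le i\le n}\{a_{i}/(a_{i}-\epsilon)\}>1$, and set $l_{k}:=N_{k}$, $u_{k}:=N_{k+1}$, $J_{k}:=\{\alpha\in J:l_{k}\le\beta_{\alpha}\le u_{k}\}$. The monotonicity of $\rho$ together with the definition of $c$ gives, for every $\alpha\in J_{k}$ and each $1\le i\le n$,
\begin{equation*}
\rho(u_{k})^{a_{i}-\epsilon}\;\ge\;\rho(\beta_{\alpha})^{a_{i}},
\end{equation*}
and hence $\Delta(R_{\alpha},\rho(u_{k})^{\ba-\bm\epsilon})\supseteq\Delta(R_{\alpha},\rho(\beta_{\alpha})^{\ba})$. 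Setting $F_{k}:=\bigcup_{\alpha\in J_{k}}\Delta(R_{\alpha},\rho(\beta_{\alpha})^{\ba})$ and $G_{k}:=\bigcup_{\alpha\in J_{k}}\Delta(R_{\alpha},\rho(u_{k})^{\ba-\bm\epsilon})$ one therefore has $G_{k}\supseteq F_{k}$; moreover, since $\{J_{k}\}_{k\ge 0}$ is eventually a partition of $J$, the full-measure hypothesis gives, for any ball $B\subset X$ and any $K\in\N$,
\begin{equation*}
m\Bigl(B\cap\bigcup_{k\ge K}F_{k}\Bigr)=m(B).
\end{equation*}

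The main obstacle is to upgrade this additive identity to the pointwise ubiquity estimate $\limsup_{k\to\infty}m(B\cap G_{k})\ge c\,m(B)$ for some absolute $c>0$: the divergence of $\sum_{k}m(B\cap F_{k})$ does not by itself rule out $m(B\cap F_{k})\to 0$. I would address this by regrouping consecutive $J_{k}$ into wider super-batches until each captures a fixed fraction of $m(B)$, simultaneously enlarging the geometric ratio $c$ (and thus $\epsilon$) so that the inclusion $G_{k}\supseteq F_{k}$ remains valid for the new super-batches; separability of $X$ means that only countably many test balls need to be treated, and $\bm\epsilon$ can still be taken arbitrarily small at the end. An alternative is a Borel--Cantelli / diagonal argument that passes to a sub-sequence of levels $N_{k}$ along which $m(B\cap F_{k})$ stays bounded below, exploiting the considerable freedom in choosing $N_{k}$ within the constraint $\rho(N_{k+1})=\rho(N_{k})^{c}$. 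Either way, once local ubiquity for the perturbed system is secured, Theorem~\ref{MTPRR} together with the limit $\epsilon\to 0^{+}$ completes the proof.
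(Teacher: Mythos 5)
The paper does not prove Theorem~\ref{MTPRR_full_measure}: both it and Theorem~\ref{MTPRR} are quoted from \cite{WW19}, so there is no ``paper proof'' to compare against, and your attempt should be judged on its own merits. In \cite{WW19} the full-measure version is established by a direct Cantor-type construction, not by a reduction to the ubiquity version, so your route is genuinely different.

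Unfortunately, there is a real gap where you yourself flag the difficulty, and your proposed repairs do not close it. The central step is to upgrade $m\left(B\cap\bigcup_{k\ge K}F_{k}\right)=m(B)$ to $\limsup_{k}m(B\cap G_{k})\ge c\,m(B)$ with $c>0$ independent of $B$ and with the perturbation $\epsilon$ arbitrarily small. This does not follow. It is entirely possible that $m(B\cap F_{k})\to 0$ (only $\sum_{k}m(B\cap F_{k})=\infty$ is guaranteed), in which case individual batches do not capture a fixed fraction of $B$. Your first fix, merging consecutive batches into super-batches that each capture a fixed fraction, makes the scale ratio $\rho(u)/\rho(l)$ of a super-batch, hence the effective $c$, as large as needed; the inclusion $G\supseteq F$ then forces $\epsilon\ge a_{i}(1-1/c)$, which for merged ranges of growing length pushes $\epsilon$ up towards $\min_{i}a_{i}$ rather than down to $0$. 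Nothing in the full-measure hypothesis caps the length of the required merges, because the measure can accumulate arbitrarily slowly across levels; hence the asserted ``$\bm\epsilon$ can still be taken arbitrarily small at the end'' is unjustified and, in general, false. The alternative diagonal/Borel--Cantelli suggestion runs into the same wall: once $c>1$ is fixed, the sequence $N_{k}$ is essentially determined by $N_{0}$, and there is no freedom to force a subsequence along which $m(B\cap F_{k})$ is bounded below. One would also need, for countably many test balls simultaneously, a single constant $c>0$ in the ubiquity definition, which compounds the difficulty.

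Two smaller issues. First, with your definition $c:=\max_{1\le i\le n}\{a_{i}/(a_{i}-\epsilon)\}$ the claimed inequality $\rho(u_{k})^{a_{i}-\epsilon}\ge\rho(\beta_{\alpha})^{a_{i}}$ fails for the $i$ attaining the minimum of $a_{i}/(a_{i}-\epsilon)$; you need $c:=\min_{1\le i\le n}\{a_{i}/(a_{i}-\epsilon)\}$, which is still $>1$ when $\epsilon<\min_{i}a_{i}$. Second, the final passage $\epsilon\to 0^{+}$ requires that the exponent $s$ of Theorem~\ref{MTPRR}, viewed as a function of $(\ba,\bt)$ with $\ba+\bt$ fixed, is (at least lower semi-) continuous at the point of interest; this is plausible since $s$ is a minimum of finitely many piecewise-linear expressions, but the partition $(K_{1},K_{2},K_{3})$ changes on boundary sets, so this should be checked rather than assumed.
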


 \begin{remark}\rm
 Note that the full measure statement of Theorem \ref{MTPRR_full_measure} is far easier to establish than the local ubiquity statement required in Theorem \ref{MTPRR}, however this shortcut comes at the cost of $s$-Hausdorff measure statement.
 \end{remark}

\vspace*{2ex}

\section{A zero-one law}

In what follows we will need a statement showing that, given a sequence of balls, if the radii of the balls are multiplied by some constant, then the Haar measure of the corresponding $\limsup$ set remains unchanged. We establish this lemma in greater generality for arbitrary ultrametric spaces where such a statement may be useful when solving problems of the same ilk, for example, in Diophantine approximation over locally compact fields of positive characteristic.

\begin{lemma}\label{measure_unchanged}
Let $(X,d)$ be a separable ultrametric space and $\mu$ be a Borel regular measure on $X$. Let $(B_i)_{i\in\N}$ be a sequence of balls in $X$ with radii $r_i\to0$ as
$i\to\infty$. Let $(U_i)_{i\in\N}$ be a sequence of $\mu$-measurable
sets such that $U_i\subset B_i$ for all $i$. Assume that for some
$c>0$
\begin{equation}\label{e:005}
     \mu(U_i)\ge c\mu(B_i)\qquad\text{for all }i\,.
\end{equation}
Then the limsup sets
$$
\textstyle
 \mathcal{U}=\limsup\limits_{i\to\infty}U_i:=\bigcap\limits_{j=1}^\infty\ \bigcup\limits_{i\ge
 j}U_i\qquad\text{ and }\qquad
 \mathcal{B}=\limsup\limits_{i\to\infty}B_i:=\bigcap\limits_{j=1}^\infty\ \bigcup\limits_{i\ge
 j}B_i
$$
have the same $\mu$-measure.
\end{lemma}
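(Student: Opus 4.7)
The plan is to first verify the trivial inclusion $\mathcal{U}\subseteq\mathcal{B}$ (which follows from $U_i\subseteq B_i$) and then prove a \emph{localized} density bound $\mu(\mathcal{U}\cap B)\ge c\,\mu(\mathcal{B}\cap B)$ for every ball $B\subseteq X$, from which $\mu(\mathcal{B}\setminus\mathcal{U})=0$ will follow by a covering argument tailored to the ultrametric setting.

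Step 1 (local density bound). Fix a ball $B\subseteq X$ of finite measure. Since $r_i\to 0$, there exists $N_B$ such that $r_i<r(B)$ for all $i\ge N_B$, so by the ultrametric nesting dichotomy each such $B_i$ is either disjoint from $B$ or contained in $B$. For $N\ge N_B$ consider the countable family $\mathcal F_N(B):=\{B_i:i\ge N,\ B_i\subseteq B\}$. Any two balls in an ultrametric space are comparable (disjoint or nested); using countability together with the uniform radius bound $r(B)$, every element of $\mathcal F_N(B)$ is contained in a maximal one, and the maximal balls form a disjoint sub-collection $\{B'_j\}=\{B_{i(j)}\}$ with
\begin{equation*}
\bigcup_{i\ge N,\,B_i\subseteq B}B_i=\bigsqcup_j B'_j.
\end{equation*}
Since $U_{i(j)}\subseteq B'_j$ and $\mu(U_{i(j)})\ge c\,\mu(B'_j)$, and the $U_{i(j)}$ inherit disjointness from the $B'_j$, summation yields
\begin{equation*}
\mu\Bigl(B\cap \bigcup_{i\ge N}U_i\Bigr)\ge \sum_j\mu(U_{i(j)})\ge c\sum_j\mu(B'_j)=c\,\mu\Bigl(B\cap\bigcup_{i\ge N}B_i\Bigr).
\end{equation*}
Letting $N\to\infty$, monotone convergence along these decreasing sequences (valid since $\mu(B)<\infty$) gives $\mu(\mathcal{U}\cap B)\ge c\,\mu(\mathcal{B}\cap B)$.

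Step 2 (from local to global). Set $E:=\mathcal{B}\setminus\mathcal{U}$, so Step~1 implies $\mu(E\cap B)\le(1-c)\mu(B)$ for every ball $B$ of finite measure. Assume for contradiction that $\mu(E)>0$, passing first to a ball of finite measure if necessary. By Borel regularity, for any $\varepsilon>0$ pick an open $G\supseteq E$ with $\mu(G)\le\mu(E)+\varepsilon$. In a separable ultrametric space, any open set is a countable disjoint union of open balls: for each $x\in G$ choose a ball $B(x,r)\subseteq G$, and by the nesting/disjointness dichotomy the maximal such balls are pairwise disjoint, cover $G$, and are automatically countable since a disjoint family of open sets in a separable space is countable. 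Writing $G=\bigsqcup_k G_k$ and applying the inequality from Step~1 to each $G_k$,
\begin{equation*}
\mu(E)=\sum_k\mu(E\cap G_k)\le(1-c)\sum_k\mu(G_k)=(1-c)\mu(G)\le(1-c)\bigl(\mu(E)+\varepsilon\bigr),
\end{equation*}
and letting $\varepsilon\to 0$ forces $c\,\mu(E)\le 0$, contradicting $c>0$.

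The hardest aspect will be cleanly justifying the two ultrametric decomposition facts: (i) that a countable family of balls with bounded radii decomposes into disjoint maximal balls whose union equals the original union, needed in Step~1 to pass from the single-ball hypothesis to a global measure estimate; and (ii) that any open set in a separable ultrametric space decomposes as a countable disjoint union of open balls, needed in Step~2 to convert the density bound into a contradiction. Both rely only on the ultrametric axiom plus separability; without the ultrametric property one would be forced to invoke Vitali/Besicovitch covering machinery or a doubling hypothesis, which the statement deliberately avoids.
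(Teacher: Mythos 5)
Your proof is correct and takes a genuinely different route from the paper. The paper argues by density extremization: it sets $\mathcal{D}_j=\mathcal{B}\setminus\bigcup_{i\ge j}U_i$, supposes $\mu(\mathcal{D})>0$, covers some $\mathcal{D}_\ell$ of positive measure by a disjoint ball family $(A_i)$, forms the supremum $\lambda$ of the densities $\mu(A_i\cap\mathcal{D}_\ell)/\mu(A_i)$, extracts a specific ball $A_{i_0}$ on which $\mathcal{D}_\ell$ has density exceeding $1-c$, and then covers $A_{i_0}\cap\mathcal{D}_\ell$ by a disjointified sub-family of the small $B_i$ ($i\ge j$) using the observation $\mathcal{D}_i\cap U_i=\varnothing$, so that $\mu(\mathcal{D}_\ell\cap B_i)\le(1-c)\mu(B_i)$; summing gives $\mu(A_{i_0}\cap\mathcal{D}_\ell)\le(1-c)\mu(A_{i_0})$, contradicting the density choice. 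You avoid the extremization entirely: Step~1 establishes the uniform local bound $\mu(\mathcal{U}\cap B)\ge c\,\mu(\mathcal{B}\cap B)$ for \emph{every} ball $B$ by disjointifying the tail family $\{B_i:i\ge N,\ B_i\subseteq B\}$ into maximal balls, transporting the hypothesis ball by ball, and taking $N\to\infty$; Step~2 then deduces $\mu(\mathcal{B}\setminus\mathcal{U})=0$ by a one-line averaging over a disjoint ball cover given by outer regularity. Both proofs rest on the same two ultrametric ingredients (disjointification of a shrinking ball family into its maximal members, and decomposition of open sets into disjoint balls), but yours is organized more cleanly and produces the reusable quantitative inequality $\mu(\mathcal{U}\cap B)\ge c\,\mu(\mathcal{B}\cap B)$ along the way, whereas the paper's is a one-shot contradiction.

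One point of rigor worth fixing in Step~1: you justify the existence of the disjoint maximal balls by ``countability together with the uniform radius bound $r(B)$''. In a general ultrametric space a uniform radius bound alone does not preclude an infinite strictly ascending chain. What actually forces all chains in $\mathcal{F}_N(B)$ to be finite is the hypothesis $r_i\to 0$: a strictly ascending chain $B_{i_1}\subsetneq B_{i_2}\subsetneq\cdots$ of distinct balls in the family has strictly increasing radii $r_{i_k}$, while the distinct indices $i_k$ must escape to infinity, forcing $r_{i_k}\to 0$, which is impossible for a strictly increasing sequence of positive reals. Cite $r_i\to 0$ rather than the radius bound. (The paper makes the analogous assertion without proof, and both arguments, like the paper's, implicitly assume $\mathcal{B}\setminus\mathcal{U}$ can be localized to a ball of finite measure and that ``Borel regular'' delivers outer approximation by open sets; these are automatic for $\Zp^n$ but deserve a word in the stated generality.)
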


\noindent The $\R^n$ version of this statement is well known and can be found for example in \cite[Lemma~1]{MR2457266}, which proof is originally due to Cassels and uses Lebesgue's density theorem. Below we give a full proof of Lemma~\ref{measure_unchanged} for completeness. Our proof is built on the ideas of \cite[Lemma~1]{MR2457266} and \cite[Lemma~1 in Part~II, Ch.~1]{S67}.

\begin{proof}
Let $\mathcal{U}_j:=\bigcup_{i\ge j}U_i$ and $\mathcal{D}_j:=\mathcal{B}\setminus\mathcal{U}_j$. Then,
$\mathcal{D}:=\mathcal{B}\setminus\mathcal{U}=\bigcup_{j}\mathcal{D}_j$ and we need to prove that $\mathcal{D}$ has $\mu$-measure zero. Assume the contrary. Then, since every set $\mathcal{D}_j$ is $\mu$-measurable and $\mathcal{D}_j\subseteq \mathcal{D}_{j+1}$ for all $j$, by the continuity of $\mu$, there is an $\ell\in\N$ such that $\mu(D_\ell)>0$. Since $\mu$ is Borel regular $\mu(\mathcal{D_\ell})=\inf\{\mu(A):\mathcal{D_\ell}\subset A,\;A\text{ is open}\}$. Since $X$ is separable and ultrametric, every open set $A$ can be written as a disjoint countable union of balls. Hence, for any $\varepsilon>0$ there exists a countable collection of disjoint balls $(A_i)$ such that
\begin{equation}\label{vb0}
\mathcal{D}_\ell\subset \bigcup_i A_i\quad\text{and}\quad \sum_i\mu(A_i)-\varepsilon\le \mu(\mathcal{D_\ell})\le \sum_i\mu(A_i)\,.
\end{equation}
Let
$$
\lambda:=\sup\left\{\frac{\mu(A_i\cap\mathcal{D}_\ell)}{\mu(A_i)}:i\in\N,\;\mu(A_i)>0\right\}\,.
$$
Note that, since $\mu(\mathcal{D}_\ell)>0$, the above set is non-empty and therefore $\lambda\in[0,1]$. Then, by \eqref{vb0}, we have that
$$
\mu(\mathcal{D}_\ell)=\sum_i\mu(A_i\cap \mathcal{D}_\ell)\le \lambda\sum_i\mu(A_i)\le \lambda(\mu(\mathcal{D}_\ell)+\varepsilon)\,.
$$
Therefore,
$$
\lambda\ge\frac{\mu(\mathcal{D}_\ell)}{\mu(\mathcal{D}_\ell)+\varepsilon}\,.
$$
Since $\mu(\mathcal{D}_\ell)>0$, on taking $\varepsilon>0$ small enough, we can ensure that $\lambda>1-c$. Then, by the definition of $\lambda$, there exists $i_0\in\N$ such that $\mu(A_{i_0})>0$ and
\begin{equation}\label{vb2}
\frac{\mu(A_{i_0}\cap\mathcal{D}_\ell)}{\mu(A_{i_0})}>1-c\,.
\end{equation}
Take $j\ge \ell$ sufficiently large so that for every $i\ge j$ the radius of $B_i$ is less than the radius of $A_{i_0}$. Then, since $X$ is ultrametric, for all $i\ge j$ if $B_i\cap A_{i_0}\neq\varnothing$ then $B_i\subset A_{i_0}$. Since $\mathcal{D}_\ell\subset \mathcal{D}\subset\mathcal{B}\subset \bigcup_{i\ge j}B_i$, we have that
\begin{equation}\label{eq10}
  A_{i_0}\cap\mathcal{D}_\ell\subset \bigcup_{i\ge j,\; B_i\cap A_{i_0}\neq\varnothing}B_i\cap\mathcal{D}_\ell\,.
\end{equation}
Without loss of generality assume the $B_{i}$ over $i \geq j$ are disjoint, since if not we can take a disjoint sub-collection of $(B_{i})_{i\ge j}$ such that the union of balls in this subcollection is again $\bigcup_{i\ge j}B_i$ and so the sub-collection would satisfy \eqref{eq10}. Such sub-collection is possible to choose since $X$ is ultrametric. Therefore, by \eqref{eq10}, we have that
\begin{equation}\label{vb3}
  \mu(A_{i_0}\cap\mathcal{D}_\ell)\le \sum_{i\ge j,\; B_i\cap A_{i_0}\neq\varnothing}\mu(B_i\cap\mathcal{D}_\ell)\,.
\end{equation}
By construction $\mathcal{D}_{i}\cap
U_{i}=\emptyset$ for every $i$.  Thus, in view of (\ref{e:005}) and the fact that $U_i\subset B_i$ we have that
$$
\mu(B_{i})\ge\mu(U_{i}\cap B_i)+\mu(\mathcal{D}_{i}\cap B_{i})\ge
c\mu(B_{i})+\mu(\mathcal{D}_{i}\cap B_{i})
$$
and so $\mu(\mathcal{D}_{i}\cap B_{i})\le(1-c) \ \mu(B_{i}) $ for all
$i$.  In particular, since $\mathcal{D}_{i}\subset \mathcal{D}_{i+1}$ for all $i$ and $j\ge \ell $ we get that
$$
\mu(\mathcal{D}_{\ell}\cap B_{i})\le \mu(\mathcal{D}_{i}\cap B_{i})\le(1-c) \ \mu(B_{i}) \quad\text{for all }i\ge j\,.
$$
Hence, by \eqref{vb3} and the assumption that the $B_i$ for $i\ge j$ are disjoint, we get that
$$
  \mu(A_{i_0}\cap\mathcal{D}_\ell)\le \sum_{i\ge j,\; B_i\cap A_{i_0}\neq\varnothing}(1-c)\mu(B_i)
 =(1-c)\mu\left(\bigcup_{i\ge j,\; B_i\cap A_{i_0}\neq\varnothing}B_i\right)\le(1-c)\mu(A_{i_0})\,.
$$
This contradicts \eqref{vb2}. The proof is thus complete.
\end{proof}

Note that Lemma~\ref{measure_unchanged} is only applicable to limsup sets contained between two balls with radius varying by some constant. Since many of our sets of interest are $\limsup$ sets of rectangles we make the following extension to Lemma~\ref{measure_unchanged}.

\begin{lemma}\label{measure_unchanged_rectangles}
Let $n\in\N$. For each $1\le j\le n$ let $(X_j,d_j)$ be a separable ultrametric space equipped with a Borel regular $\sigma$-finite measure $\mu_{j}$, $(B^{(j)}_i)_{i\in\N}$ be a sequence of balls in $X_j$ with radii $r^{(j)}_i\to0$ as
$i\to\infty$, $(U^{(j)}_i)_{i\in\N}$ be a sequence of $\mu_j$-measurable sets such that $U^{(j)}_i\subset B^{(j)}_i$ for all $i$ and assume that for some
$c^{(j)}>0$
\begin{equation}\label{e:005z}
     \mu_{j}\left(U^{(j)}_i\right)\ge c^{(j)}\mu_{j}\left(B^{(j)}_i\right) \qquad\text{for all }i\in\N.
\end{equation}
Let $X=\prod_{j=1}^{n}X_{j}$, $d=\max_{j}d_{j}$ be the metric on $X$, $\mu=\prod_{j=1}^n\mu_j$ be the product of measure on $X$ and for each $i\in\N$ let $B_i=\prod_{j=1}^{n}B^{(j)}_i$ and $U_i=\prod_{j=1}^{n}U^{(j)}_i$.
Then the limsup sets
\begin{equation}\label{eqn15-0}
 \mathcal{U}=\limsup\limits_{i\to\infty}U_i\qquad\text{ and }\qquad
 \mathcal{B}=\limsup\limits_{i\to\infty}B_i
\end{equation}
have the same $\mu$-measure.
\end{lemma}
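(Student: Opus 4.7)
The plan is to adapt the proof of Lemma~\ref{measure_unchanged} to the product setting, exploiting the fact that the product space $(X,d)$ with $d=\max_j d_j$ is itself a separable ultrametric space and that the product measure $\mu=\prod_j\mu_j$ is Borel regular and $\sigma$-finite. The only structural difference from the single-factor case is that the sets $B_i$ are now \emph{rectangles}, i.e.\ products of balls of possibly different radii in different coordinates, rather than balls in $X$. The key observation is that this distinction does not matter for the Cassels-type density argument used in Lemma~\ref{measure_unchanged}.

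First I would set $\mathcal{U}_j=\bigcup_{i\ge j}U_i$, $\mathcal{D}_j=\mathcal{B}\setminus\mathcal{U}_j$, $\mathcal{D}=\mathcal{B}\setminus\mathcal{U}=\bigcup_j\mathcal{D}_j$, and argue by contradiction: if $\mu(\mathcal{D})>0$, pick $\ell$ with $\mu(\mathcal{D}_\ell)>0$ by continuity of $\mu$. Since $X$ is a separable ultrametric space, every open set is a countable disjoint union of balls in the max metric (which are exactly the cubes $\prod_j A^{(j)}$ with a common radius in all factors). Using Borel regularity, cover $\mathcal{D}_\ell$ by a disjoint family of such cubes $(A_i)$ satisfying the analogue of \eqref{vb0}. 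Set $c:=\prod_{j=1}^{n}c^{(j)}>0$; the product structure $U_i=\prod_j U_i^{(j)}\subset\prod_j B_i^{(j)}=B_i$ together with \eqref{e:005z} yields $\mu(U_i)\ge c\,\mu(B_i)$ for every $i$. A pigeonhole argument identical to the one after \eqref{vb0} then produces a cube $A_{i_0}$ with
\[
\frac{\mu(A_{i_0}\cap\mathcal{D}_\ell)}{\mu(A_{i_0})}>1-c.
\]

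The crucial step, and the only one that needs genuine checking, is the ultrametric containment used in \eqref{eq10}: if $i$ is large enough that $\max_k r^{(k)}_i$ is strictly less than the (common) radius of $A_{i_0}$, then $B_i\cap A_{i_0}\neq\varnothing$ forces $B_i\subset A_{i_0}$. This is verified coordinatewise: in each factor $(X_k,d_k)$, the ball $B_i^{(k)}$ has radius smaller than $A_{i_0}^{(k)}$, and by the ultrametric property of $X_k$ any two such balls that meet are nested, giving $B_i^{(k)}\subset A_{i_0}^{(k)}$ for every $k$. Because $r_i^{(j)}\to 0$ as $i\to\infty$ for every fixed $j$ and there are only finitely many factors, we can choose a threshold $j\ge\ell$ beyond which this holds.

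The remainder of the argument then tracks the proof of Lemma~\ref{measure_unchanged} verbatim. After passing (as in the original proof) to a disjoint subcollection of $(B_i)_{i\ge j}$ whose union is unchanged, which is possible in the ultrametric space $X$, one uses $\mathcal{D}_\ell\cap U_i=\varnothing$ for $i\ge\ell$ and $\mu(U_i)\ge c\,\mu(B_i)$ to obtain $\mu(\mathcal{D}_\ell\cap B_i)\le(1-c)\mu(B_i)$ for every $i\ge j$. Summing over the disjoint $B_i$ with $B_i\subset A_{i_0}$ gives
\[
\mu(A_{i_0}\cap\mathcal{D}_\ell)\le (1-c)\sum_{\substack{i\ge j\\ B_i\subset A_{i_0}}}\mu(B_i)\le (1-c)\mu(A_{i_0}),
\]
which contradicts the density lower bound above. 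I expect the main (minor) obstacle to be precisely the verification that the ultrametric nesting property of the single-factor case extends to rectangles with coordinatewise small radii, together with the bookkeeping required to pass to a disjoint subcollection of rectangles; after that, the argument is a faithful product-space transcription of the proof of Lemma~\ref{measure_unchanged}.
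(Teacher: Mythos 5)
Your proposal stalls at a step that is actually the reason the paper does not simply repeat the proof of Lemma~\ref{measure_unchanged} in $X$: the sets $B_i=\prod_j B_i^{(j)}$ are rectangles with coordinatewise different radii, and in an ultrametric product these are \emph{not} balls for the max metric (balls are cubes, as you yourself note). The key structural fact used in Lemma~\ref{measure_unchanged} --- that any family of balls in an ultrametric space admits a disjoint subfamily with the same union, because two intersecting balls are always nested --- fails for rectangles. Two rectangles $B_i$ and $B_{i'}$ may intersect with $B_i^{(1)}\supsetneq B_{i'}^{(1)}$ in the first coordinate but $B_i^{(2)}\subsetneq B_{i'}^{(2)}$ in the second, so that neither contains the other and their overlap is a proper sub-rectangle of both. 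Consequently the line ``after passing, as in the original proof, to a disjoint subcollection of $(B_i)_{i\ge j}$ whose union is unchanged, which is possible in the ultrametric space $X$'' is false, and the final chain
\[
\mu(A_{i_0}\cap\mathcal{D}_\ell)\le(1-c)\sum_{\substack{i\ge j\\ B_i\subset A_{i_0}}}\mu(B_i)\le(1-c)\mu(A_{i_0})
\]
breaks: without disjointness $\sum\mu(B_i)$ can exceed $\mu(A_{i_0})$ by an arbitrary factor, so no contradiction with the density lower bound $>1-c$ is obtained. A Vitali-type bounded-overlap selection would only replace $1-c$ by $K(1-c)$ for some covering constant $K$, which is useless unless $c>1-1/K$, an assumption you do not have.

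The paper avoids this entirely by a Fubini reduction: it slices along one coordinate at a time, shows that each fiber of $\limsup_i(B_i^{(1)}\times\hat B_i)$ over $y\in\hat X$ is a genuine $\limsup$ of \emph{balls} $B_i^{(1)}$ in $X_1$ (over the subsequence $i\in I_y$ of indices with $y\in\hat B_i$), applies Lemma~\ref{measure_unchanged} fiberwise to replace $B_i^{(1)}$ by $U_i^{(1)}$, and then integrates. Iterating over the $n$ coordinates gives the result. The part of your argument that does go through --- if $i$ is large enough that $\max_k r_i^{(k)}$ is below the radius of a cube $A_{i_0}$ then $B_i\cap A_{i_0}\ne\varnothing$ forces $B_i\subset A_{i_0}$, verified coordinatewise --- is correct, but it is not the bottleneck; the missing idea is the slicing that lets you work with genuine balls in a single factor where the nesting-or-disjoint dichotomy holds.
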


The key ingredients in the proof of Lemma \ref{measure_unchanged_rectangles} are Lemma~\ref{measure_unchanged} and Fubini's Theorem, which we recall below in the special case of integrating the characteristic function of a measurable set, see
\cite[p. 233]{MR1324786} or \cite[\S2.6.2]{MR0257325}.

\begin{theorem}[Fubini's Theorem] Let $\mu_{1}$ be a $\sigma$-finite measure over $X$ and $\mu_{2}$ be a $\sigma$-finite measure over $Y$. Then $\mu_{1} \times \mu_{2}$ is a regular measure over $X \times Y$. Let $S \subseteq X \times Y$ be a $\mu_{1}\times \mu_{2}$ measurable set and let
\begin{align*}
S^{x}:=\{y : (x,y) \in S \}, \\
S_{y}:=\{x: (x,y) \in S \}.
\end{align*}
Then
\begin{equation*}
(\mu_{1}\times\mu_{2})(S)= \int_Y \mu_{1}(S^{y}) d \mu_{2} = \int_X \mu_{2}(S_{x}) d \mu_{1}.
\end{equation*}
\end{theorem}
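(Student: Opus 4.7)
The plan is the classical two-step argument for Fubini's theorem: establish the identity together with the requisite slice-measurability first on measurable rectangles, extend to the product $\sigma$-algebra by a monotone class argument, and finally to the completed $\mu_1\times\mu_2$-measurable $\sigma$-algebra via $\sigma$-finiteness. The regularity of $\mu_1\times\mu_2$ is a separate outer/inner approximation argument inherited from regular approximations of the factor measures.

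First, for $S=A\times B$ with $A$ $\mu_1$-measurable and $B$ $\mu_2$-measurable, the slice is $S^x=B$ when $x\in A$ and $S^x=\varnothing$ otherwise, so $\mu_2(S^x)=\mu_2(B)\mathbf{1}_A(x)$ is $\mu_1$-measurable and integrates to $\mu_1(A)\mu_2(B)$, which by the very definition of the product measure equals $(\mu_1\times\mu_2)(S)$. Finite additivity immediately propagates both the measurability of the slice function and the integral identity to the algebra $\mathcal{A}$ of finite disjoint unions of measurable rectangles.

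Assume next that $\mu_1(X),\mu_2(Y)<\infty$, and let $\mathcal{M}$ be the family of product-measurable $S$ for which $x\mapsto \mu_2(S^x)$ is $\mu_1$-measurable and the identity $(\mu_1\times\mu_2)(S)=\int_X \mu_2(S^x)\,d\mu_1$ holds (and symmetrically for slices in $y$). Monotone and dominated convergence (legitimate because both factor measures are finite) show that $\mathcal{M}$ is closed under monotone increasing unions and monotone decreasing intersections. Since $\mathcal{M}\supset\mathcal{A}$, the monotone class theorem forces $\mathcal{M}\supset\sigma(\mathcal{A})=\Sigma_X\otimes\Sigma_Y$. For the general $\sigma$-finite case I would decompose $X=\bigsqcup_k X_k$ and $Y=\bigsqcup_\ell Y_\ell$ into pieces of finite measure, apply the finite case to each slab $S\cap(X_k\times Y_\ell)$, and reassemble using countable additivity and monotone convergence.

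The main obstacle is handling sets that lie in the $\mu_1\times\mu_2$-completion of $\Sigma_X\otimes\Sigma_Y$ but not in $\Sigma_X\otimes\Sigma_Y$ itself: for such $S$ the slice $S^x$ need only be $\mu_2$-measurable for $\mu_1$-almost every $x$, not every $x$, and likewise for $S_y$. The standard resolution is to sandwich $S$ between sets $S',S''\in\Sigma_X\otimes\Sigma_Y$ whose symmetric difference is a product-null set $N$, apply the already-proved identity to $S'$ and $S''$, and conclude from $\int_X \mu_2(N^x)\,d\mu_1=0$ that $\mu_2(N^x)=0$ for $\mu_1$-a.e.\ $x$, so that $\mu_2(S^x)=\mu_2((S')^x)$ almost everywhere. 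The $\sigma$-finiteness hypothesis is essential here to exhibit $N$ as a countable union of sets of finite product measure, and to make each limiting and Tonelli-style passage legitimate; regularity of $\mu_1\times\mu_2$ then follows by approximating any measurable $S$ from inside and outside using regular approximations of each factor slab, combined via countable additivity.
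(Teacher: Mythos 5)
The paper does not prove this statement at all: Fubini's theorem is quoted as a classical result, with explicit references to Billingsley and to Federer, and is stated only in the restricted form (integrating the indicator of a measurable set) that the authors need for Lemma 4.2. There is therefore no in-paper proof to compare against. Your outline is a correct rendition of the standard textbook argument, which is the same route taken in the cited references: verify the slice identity on measurable rectangles, extend by finite additivity to the algebra of finite disjoint unions of rectangles, pass to the generated $\sigma$-algebra via the monotone class theorem in the finite-measure case (using monotone and dominated convergence), patch together the $\sigma$-finite case with an exhaustion $X=\bigsqcup_k X_k$, $Y=\bigsqcup_\ell Y_\ell$, and finally handle the completed product $\sigma$-algebra by sandwiching between sets in $\Sigma_X\otimes\Sigma_Y$ whose difference is product-null.

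One subtlety you pass over in the completion step: for a set $S$ that is $\mu_1\times\mu_2$-measurable but not in $\Sigma_X\otimes\Sigma_Y$, the inclusion $S^x\setminus (S')^x\subset N^x$ with $\mu_2(N^x)=0$ for a.e.\ $x$ does not by itself make $S^x$ $\mu_2$-measurable unless $\mu_2$ is complete; in general $S^x$ lies only in the $\mu_2$-completion of $\Sigma_Y$, and the identity $\mu_2(S^x)=\mu_2((S')^x)$ must be read with $\mu_2$ replaced by its completion. This is the form the theorem takes in Federer, and it is immaterial for the paper's application since the Haar measure $\mu_p$ on $\Z_p$ (and its $n$-fold product) is complete. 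Apart from that caveat your argument is sound and consistent with the sources the paper cites.
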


We now proceed with the proof of Lemma \ref{measure_unchanged_rectangles}.

\begin{proof}
We initially prove that
\begin{equation*}
\mu \left( \limsup_{i \to \infty} B_{i}^{(1)} \times \prod_{j=2}^{n}B_{i}^{(j)} \right)= \mu \left( \limsup_{i \to \infty} U_{i}^{(1)} \times \prod_{j=2}^{n}B_{i}^{(j)} \right),
\end{equation*}
and note that Lemma \ref{measure_unchanged_rectangles} follows inductively.
For ease of notation let
\begin{equation*}
\hat{\mu}=\prod_{j=2}^{n}\mu_{j}, \quad \hat{B}_{i}=\prod_{j=2}^{n}B_{i}^{(j)}, \quad \hat{X}=\prod_{j=2}^{n}X_{j}.
\end{equation*}
For any $y \in \hat{X}$ let
\begin{equation*}
I_{y}= \{ i: y \in \hat{B}_{i}\},
\end{equation*}
and for any $F \subseteq X$ let $F_y$ denote the fiber of $F$ at $y$, that is
\begin{equation*}
F_y=\{ x: (x,y) \in F\} \subseteq X_{1}.
\end{equation*}
Observe that
\begin{equation}\label{eqn15}
A:=\left( \limsup_{i \to \infty} B_{i}^{(1)} \times \hat{B}_{i} \right)_{y}=\underset{i \in I_y}{\limsup_{i \to \infty}}B^{(1)}_{i}=:D.
\end{equation}
Indeed, if $x \in A$ then it implies there exists an infinite sequence $\{i_{k}\}$ such that
\begin{equation*}
(x,y) \in B_{i_{k}}^{(1)} \times \hat{B}_{i_k} \quad \text{ for all } i_k.
\end{equation*}
Hence $\{i_k\} \subseteq I_y$ and so $x \in D$.

Conversely, if $x \in D$ then $D$ is non-empty and so $I_y$ must be infinite. By the definition of $I_y$ and the fact that $x \in D$ we have that $x \in B^{(1)}_{i}$ for infinitely many $i \in I_y$, and so $x \in A$. \par

Similarly, we have that
\begin{equation}\label{eqn16}
\left( \limsup_{i \to \infty} U_{i}^{(1)} \times \hat{B}_{i} \right)_{y}=\underset{i \in I_y}{\limsup_{i \to \infty}}U^{(1)}_{i}.
\end{equation}

Applying Fubini's Theorem we have that
\begin{align*}
\mu \left( \limsup_{i \to \infty} B_{i}^{(1)} \times \hat{B}_{i} \right) & = \int_{\hat X} \mu_{1} \left( \left( \limsup_{i \to \infty} B_{i}^{(1)} \times \hat{B}_{i} \right)_{y} \right) d \hat{\mu}, \\
& \stackrel{\eqref{eqn15}}{=} \int_{\hat X} \mu_{1} \left( \underset{i \in I_y }{\limsup_{i \to \infty}} B_{i}^{(1)} \right) d \hat{\mu},\\
& \overset{\text{Lemma~\ref{measure_unchanged}}}{=} \int_{\hat X} \mu_{1} \left( \underset{i \in I_y }{\limsup_{i \to \infty}}\; U_{i}^{(1)}  \right)_{y} d \hat{\mu}, \\
& \stackrel{\eqref{eqn16}}{=} \int_{\hat X} \mu_{1} \left( \left( \limsup_{i \to \infty} U_{i}^{(1)} \times \hat{B}_{i} \right)_{y} \right) d \hat{\mu}, \\
&=\mu \left( \limsup_{i \to \infty} U_{i}^{(1)} \times \hat{B}_{i} \right)\,.
\end{align*}
Note that in the above argument we have not made use of the fact $\hat B_i$ are products of balls; we only used the fact that these are measurable sets. Hence, the above argument can be repeated $n-1$ more times, for $\ell=2,\dots,n-1$ each time replacing $B^{(\ell)}_i$ by $U^{(\ell)}_i$ so that at step $\ell$ we get that
\begin{align*}
\mu \left( \limsup_{i \to \infty} \prod_{j=1}^{\ell-1} U_{i}^{(j)} \times \prod_{j=\ell}^n B_{i}^{(j)} \right) & =\mu \left( \limsup_{i \to \infty}\prod_{j=1}^\ell  U_{i}^{(j)} \times \prod_{j=1}^{\ell+1}  B_{i}^{(j)} \right)\,.
\end{align*}
Putting all these equations for $\ell=1,\dots,n$ together we get \eqref{eqn15-0} as claimed.
\end{proof}

Lemma~\ref{borel_cantelli_divergence} only proves positive measure for a $\limsup$ set. In the context of Theorem~\ref{wkpadic} we need a Zero-One law. In \cite{H10} Haynes proved a zero-full result for the simultaneous case. We adapt this method of proof for the weighted case.
\begin{lemma} \label{haynes_0-1}
Let $n \in \N$, $p$ be a prime and $\Psi=(\psi_1,\dots,\psi_n)$ be any $n$-tuple of approximation functions. Then
\begin{equation*}
\mu_{p,n}(\Wp_{n}'(\Psi)) \in \{0,1\}.
\end{equation*}
\end{lemma}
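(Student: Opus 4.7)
The plan is to adapt Haynes's approach from \cite{H10}, which handles the simultaneous case $\psi_1=\cdots=\psi_n$, to the weighted setting. Write $W:=\Wp_n'(\Psi)$ and suppose $\mu_{p,n}(W)>0$; the aim is to deduce $\mu_{p,n}(W)=1$.

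The core step will be to establish the uniform density estimate
\begin{equation}\label{eq:unifdens}
\mu_{p,n}(W\cap B)\ge c\,\mu_{p,n}(B)\qquad\text{for every ball }B\subset\Zp^n,
\end{equation}
with a constant $c=c(\mu_{p,n}(W))>0$. Granting \eqref{eq:unifdens}, applying the $p$-adic Lebesgue density theorem (which holds on $\Zp^n$, since $(\Zp^n,|\cdot|_p,\mu_{p,n})$ is a doubling ultrametric measure space) to $W^c$ will immediately yield $\mu_{p,n}(W^c)=0$: any density point of $W^c$ would produce arbitrarily small balls $B$ with $\mu_{p,n}(W\cap B)/\mu_{p,n}(B)\to 0$, contradicting \eqref{eq:unifdens}. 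Hence $\mu_{p,n}(W)\in\{0,1\}$ as desired.

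To prove \eqref{eq:unifdens}, we will fix $B=B(\bx_0,p^{-N})\subset\Zp^n$ with $\bx_0\in\Z^n$ an integer lifting of the centre. For each $a_0\in\N$ coprime to $p$ with $\max_i\psi_i(a_0)<p^{-N}$, a primitive tuple $(a_1,\dots,a_n)$ with $|a_i|\le a_0$ and $(a_i,a_0)=1$ contributes to $\Ap_{a_0}'(\Psi)\cap B$ precisely when $a_i/a_0\equiv x_{0,i}\pmod{p^N}$ in $\Zp$ for every $i$. The key input is that as $a_0$ runs over integers coprime to $p$, the residues $\{a/a_0\bmod p^N:(a,a_0)=1\}$ equidistribute in $\Z/p^N\Z$, which gives
$$
\mu_{p,n}(\Ap_{a_0}'(\Psi)\cap B)=(1+o(1))\,\mu_{p,n}(\Ap_{a_0}'(\Psi))\,\mu_{p,n}(B),
$$
together with an analogous asymptotic for the pairwise intersections $\Ap_{a_0}'(\Psi)\cap\Ap_{a_0'}'(\Psi)\cap B$. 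Inserting these into the Kochen--Stone bound (Lemma~\ref{borel_cantelli_divergence}) applied to the events $\Ap_{a_0}'(\Psi)\cap B$ inside the probability space $(B,\mu_{p,n}(B)^{-1}\mu_{p,n}|_B)$ will produce a positive lower bound for $\mu_{p,n}(W\cap B)/\mu_{p,n}(B)$ depending only on $\mu_{p,n}(W)$.

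The main obstacle will be establishing the uniform equidistribution of primitive fractions $\{a/a_0:(a,a_0)=1\}$ modulo $p^N$ as $a_0\to\infty$ (coprime to $p$), together with the parallel asymptotic for the pairwise intersections of approximation sets with distinct denominators. In the weighted case one has to handle the $n$-fold product $\phi(a_0)^n$ of primitive counts and keep the resulting error terms negligible relative to the series $\sum_{a_0}\phi(a_0)^n\prod_i\psi_i(a_0)$, which may diverge slowly. This is precisely the technical refinement of \cite{H10} needed in our weighted $p$-adic framework, and it is where the bulk of the work will lie.
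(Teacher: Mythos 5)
Your plan is not the approach of the paper, and it contains a genuine gap at its crucial step. You claim that applying the Kochen--Stone inequality to the localized events $\Ap_{a_0}'(\Psi)\cap B$ (after the equidistribution step) will produce a lower bound $\mu_{p,n}(W\cap B)/\mu_{p,n}(B)\ge c$ with $c=c(\mu_{p,n}(W))>0$. This is unjustified. What Kochen--Stone actually yields (once the local quantities are replaced by global ones using equidistribution) is a lower bound in terms of the quasi-independence ratio
$$\kappa:=\limsup_{N\to\infty}\frac{\bigl(\sum_{a_0\le N}\mu_{p,n}(\Ap_{a_0}'(\Psi))\bigr)^2}{\sum_{a_0,b_0\le N}\mu_{p,n}(\Ap_{a_0}'(\Psi)\cap\Ap_{b_0}'(\Psi))}\,,$$
not in terms of $\mu_{p,n}(W)$. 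Kochen--Stone is the one-sided inequality $\mu_{p,n}(W)\ge\kappa$; the hypothesis $\mu_{p,n}(W)>0$ therefore gives no information whatsoever about $\kappa$, which could a priori vanish. Establishing $\kappa>0$ requires exactly the pairwise-intersection bounds of Section~5 (claims (a)--(c) and the monotonicity of $\prod\psi_i$, or the Duffin--Schaeffer condition). But Lemma~\ref{haynes_0-1} is supposed to hold for \emph{arbitrary} $\Psi$ and to be the ingredient that promotes the positive-measure output of Kochen--Stone to full measure --- so your argument either secretly re-imports the structural assumptions it is meant to dispense with, or it is circular.

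The paper, following Haynes, proceeds along an entirely different route that sidesteps quasi-independence. One defines the digit-shift map $\pi:\Zp\to\Zp$ (with a small correction on the units digit) and its coordinate-wise extension $\pi_n$, and checks by hand that $\pi_n(\Wp_n'(\Psi))\subseteq\Wp_n'(p\Psi)$. Starting from $\mu_{p,n}(W)>0$, the $p$-adic Lebesgue density theorem produces a small ball on which $W$ has density at least $1-\epsilon$; pushing forward by $\pi_n^N$ inflates this ball to essentially all of $\Zp^n$, giving $\mu_{p,n}(\Wp_n'(p^N\Psi))\ge1-\epsilon$. Letting $\epsilon\to0$ gives $\mu_{p,n}\bigl(\bigcup_N\Wp_n'(p^N\Psi)\bigr)=1$, and Lemma~\ref{measure_unchanged_rectangles} (the $\limsup$-rectangle version of Cassels' lemma) collapses each $\Wp_n'(p^N\Psi)$ back to the same measure as $\Wp_n'(\Psi)$. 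The density theorem is thus used exactly once, in the forward direction, and no second-moment/intersection estimates are needed. You should replace the localized Kochen--Stone step with this pushing-forward-by-$\pi_n$ mechanism; the equidistribution heuristic you describe is plausible but is neither necessary nor, by itself, sufficient for the zero-one law.
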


\begin{remark}\rm
Haynes proved this result for the more general setting of $S$-arithmetic approximation. We note that Lemma~\ref{haynes_0-1} can also be proven in the $S$-arithmetic setting, however we limit ourselves to the $p$-adic case to avoid introducing further notation and concepts which are not dealt with in this paper.
\end{remark}

\begin{proof}
Firstly, note that the sets $\Ap_{a_{0}}'(\Psi)$ used to construct our $\limsup$ set have the property that if $p \mid a_{0}$, then $\Ap_{a_{0}}'(\Psi)= \emptyset$ or $\Zp^{n}$, so assume $p \nmid a_{0}$. Define the map $\pi: \Zp \to \Zp$ as follows. For a $p$-adic integer $x \in \Zp$ with $p$-adic expansion
\begin{equation*}
x=\sum_{i=0}^{\infty}a_{i}p^{i}, \quad a_{i} \in \{0, \dots , p-1\},
\end{equation*}
define
\begin{equation*}
\pi(x)= \begin{cases}
\sum_{i=0}^{\infty}a_{i+1}p^{i}, \quad \text{ if } a_{0}=0, \\
1+ \sum_{i=0}^{\infty} a_{i+1}p^{i}, \quad \text{ otherwise}.
\end{cases}
\end{equation*}
Let $\pi_{n}:\Zp^{n} \to \Zp^{n}$ be the transformation $(x_{1}, \dots, x_{n}) \mapsto (\pi(x_{1}), \dots , \pi(x_{n}))$. By using the fact that $p \nmid a_{0}$, and that each $(a_{i},a_{0})=1$, it can be shown that under such mapping
\begin{equation*}
\pi_{n}( \Wp_{n}'(\Psi)) \subseteq \Wp_{n}'(p\Psi),
\end{equation*}
where $p\Psi$ means each component of $\Psi$ has to be multiplied by $p$. This can be repeated inductively to show that $\pi_{n}^{K}( \Wp_{n}'(\Psi)) \subseteq \Wp_{n}'(p^{K}\Psi)$ for any $K \in \N$. Assuming that $\mu_{p,n}(\Wp_{n}'(\Psi))>0$, then by a $p$-adic version of the Lebesgue Density Theorem (see e.g. Lemma~1 in \cite[Part~II, Ch.~1]{S67}) for any $\epsilon>0$ there exists integer vector $\bx_{0} \in \Z^{n}$ and $N \in \N$ such that
\begin{equation*}
\mu_{p,n}\left( \{ \bx \in \Wp_{n}'(\Psi): |\bx-\bx_{0}|_{p} \leq p^{-N} \} \right) \geq (1-\epsilon)p^{-N}.
\end{equation*}
Further, we have that
\begin{equation*}
\pi_{n}^{N}\left( \{ \bx \in \Wp_{n}'(\Psi): |\bx-\bx_{0}|_{p} \leq p^{-N} \} \right) \subseteq \Wp_{n}'(p^{N} \Psi),
\end{equation*}
and so
\begin{align*}
\mu_{p,n} \left( \Wp_{n}'(p^{N}\Psi) \right) & \geq \mu_{p,n} \left( \pi_{n}^{N}\left( \{ \bx \in \Wp_{n}'(\Psi): |\bx-\bx_{0}|_{p} \leq p^{-N} \} \right) \right), \\
&\geq p^{N}(1-\epsilon) p^{-N}, \\
&=(1- \epsilon).
\end{align*}
Since $\epsilon$ is arbitrary we have that $\mu_{p,n}(\bigcup_{N=1}^\infty\Wp_{n}'(p^{N}\Psi))=1$. Now observe that
$$
\Wp_{n}'(\Psi)\subset \Wp_{n}'(p\Psi)\subset \Wp_{n}'(p^{2}\Psi)\subset\dots
$$
and so, by Lemma~\ref{measure_unchanged_rectangles} with $X=\Z_p^n$, $d$ given by the sup norm, and $\mu=\mu_{p,n}$, we have that $\mu_{p,n}(\Wp_{n}'(\Psi))=\mu_{p,n}(\Wp_{n}'(p^N\Psi))$
for every $N\in\N$. Hence,
$$
\mu_{p,n}(\Wp_{n}'(\Psi))=\lim_{N\to\infty}\mu_{p,n}\left(\Wp_{n}'(p^{N}\Psi)\right)=\mu_{p,n}\left(\bigcup_{N=1}^\infty\Wp_{n}'(p^{N}\Psi)\right)=1\,,
$$
thus finishing the proof.
\end{proof}

\section{Proof of Theorems~\ref{wkpadic} \&{} \ref{DS_wkpadic}}

By Lemmas \ref{borel_cantelli_convergence}--\ref{borel_cantelli_divergence} it is clear that we need bounds on the measure of $\Ap'_{a_{0}}(\Psi)$ and $\Ap'_{a_{0}}(\Psi) \cap \Ap'_{b_{0}}(\Psi)$ for $a_{0}, b_{0} \in \N$. As we are considering these measures at fixed values of $a_{0}$ and $b_{0}$ the monotonicity condition of Theorem~\ref{wkpadic} does not appear until we consider the summations over the measures of these sets. For that reason Theorems~\ref{wkpadic} \&{} \ref{DS_wkpadic} are proven in tandem up to such point. \par
Since $(a_{0},a_{i})=1$ observe that we must have $p \nmid a_{0}$. If $p\mid a_{0}$ then the reduced fractions $\frac{a_{i}}{a_{0}}$ used in the composition of $\Ap_{a_{0}}'(\Psi)$ would satisfy $\left| \frac{a_{i}}{a_{0}} \right|_{p}>1$ for any component $1 \leq i \leq n$. And so for sufficiently large $a_0$ we have that
\begin{equation*}
\left\{ \bx \in \Zp^{n}: \left| x_{i}-\frac{a_{i}}{a_{0}} \right|_{p}< \psi_{i}(a_{0}), \, \, 1 \leq i \leq n \right\} = \emptyset,
\end{equation*}
Hence without loss of generality when considering the measure of $\Ap'_{a_{0}}(\Psi)$ and $\Ap'_{a_{0}}(\Psi) \cap \Ap'_{b_{0}}(\Psi)$ we will assume that $p \nmid a_{0},b_{0}$. \par
With regards to the condition that each $\psi_{i}(q)\ll \frac{1}{q}$ note that Lemma \ref{measure_unchanged_rectangles} allows us to reduce this to the condition that each $\psi_{i}(q) < \frac{1}{q}$ for $1 \leq i \leq n$ and the measure results will remain unchanged. Similarly such constants adjustment would not effect the convergence or divergence of the sums of interest.

Note that for any $x \in \Zp$ and $0<r<1$ there exists $t \in \N_{0}$ such that $B(x,r)=B(x,p^{-t})$. For each $1 \leq i \leq n$ define the function $t_{i}:\N \to \N_{0}$ with $t_{i}(a_{0})$ satisfying
\begin{equation*}
p^{-t_{i}(a_{0})} < \psi_{i}(a_{0}) \leq p^{-t_{i}(a_{0})+1}.
\end{equation*}
Then for any $1 \leq i \leq n$ and $a_{0} \in \N$ we have that $\psi_{i}(a_{0}) \asymp p^{-t_{i}(a_{0})}$ and
\begin{equation*}
B\left( x,\psi_{i}(a_{0}) \right)=B\left( x, p^{-t_{i}(a_{0})+1} \right).
\end{equation*}
Hence, without loss of generality we could replace the $n$-tuple of approximation functions $\Psi$ with the function $T$ given by $T(a_0)=\left( p^{-t_{1}(a_{0})+1}, \dots , p^{-t_{n}(a_{0})+1} \right)$. Thus, we have that $\mu_{p,n}(\Wp_{n}(\Psi))=\mu_{p,n}(\Wp_{n}(T))$.

For $a_{0},b_{0} \in \N$ and $\varphi$ Euler's totient function we prove the following claims
 \begin{enumerate}[(a)]
 \item $\mu_{p,n}\left( \Ap'_{a_{0}}(\Psi)\right) \ll \varphi(a_{0})^{n}\prod_{i=1}^{n}\psi_{i}(a_{0})$,
 \item $\mu_{p,n}\left( \Ap'_{a_{0}}(\Psi)\right) \gg \varphi(a_{0})^{n}\prod_{i=1}^{n}\psi_{i}(a_{0})$,
 \item $\mu_{p,n}\left( \Ap_{a_{0}}(\Psi) \cap \Ap_{b_{0}}(\Psi) \right) \ll a_{0}^{n}b_{0}^{n} \prod_{i=1}^{n}\psi_{i}(a_{0})\psi_{i}(b_{0})$.
 \end{enumerate}

Beginning with (a) observe that
\begin{equation}\label{disjoint_measure0}
\mu_{p,n}(\Ap'_{a_{0}}(\Psi)) =\mu_{p,n} \left( \underset{\gcd(a_{i},a_{0})=1, \, 1 \leq i \leq n}{\bigcup_{|a_{i}| \leq a_{0}}} \prod_{i=1}^{n}B\left( \frac{a_{i}}{a_{0}}, \psi_{i}(a_{0}) \right) \right).
\end{equation}
If each rectangle in the above composition is disjoint then
\begin{align}
\mu_{p,n}(\Ap'_{a_{0}}(\Psi))& =\underset{\gcd(a_{i},a_{0})=1, \, 1 \leq i \leq n}{\sum_{|a_{i}| \leq a_{0}}}\mu_{p,n} \left(  \prod_{i=1}^{n}B\left( \frac{a_{i}}{a_{0}}, \psi_{i}(a_{0}) \right) \right)
\asymp \varphi(a_{0})^{n}\prod_{i=1}^{n}\psi_{i}(a_{0}), \label{disjoint_measure}
\end{align}
since $\mu_{p,n}$ is the product measure of $n$ copies of $\mu_{p}$, and so the measure of the product of the balls in the above expression equals the product of their measures. This provides us with an upper bound on $\mu_{p,n}(\Ap'_{a_{0}}(\Psi))$, since any non-empty intersections in the union within \eqref{disjoint_measure0} would only make the measure of the union smaller than their sum given by \eqref{disjoint_measure}.

To prove (b) we simply need to show that the union within \eqref{disjoint_measure0} contains no non-empty intersections. Suppose this is not the case, say
\begin{equation*}
\left( \prod_{i=1}^{n}B\left(\frac{b_{i}}{a_{0}}, \psi_{i}(a_{0})\right)\right) \bigcap \left( \prod_{i=1}^{n}B\left(\frac{c_{i}}{a_{0}}, \psi_{i}(a_{0})\right)\right) \neq \emptyset,
\end{equation*}
for some points $b=(b_{1}, \dots , b_{n}),c=(c_{1}, \dots , c_{n}) \in \Z^{n}$ with $|b_{i}|,|c_{i}| \leq a_{0}$ and $b \neq c$. Then we have that
\begin{equation*}
\left|b_{i}-c_{i}\right|_{p} \leq \psi_{i}(a_{0}), \quad 1 \leq i \leq n,
\end{equation*}
since $|a_{0}|_{p}=1$. Such inequalities would hold if and only if $\psi_{i}(a_{0})\geq \frac{1}{a_{0}}$ for all $1 \leq i \leq n$ such that $b_{j} \neq c_{j}$. However, we have that $\psi_{i}(q)< \frac{1}{q}$ for all $1 \leq i \leq n$ and $q\in\N$ and thus, by \eqref{disjoint_measure}, we have the required lower bound on $\mu_{p,n}(\Ap'_{a_{0}}(\Psi))$.

To prove (c) define the set
 \begin{equation*}
Q:= \{(a,b) \in \Z^{2}: |a| \leq a_{0}, \, |b| \leq b_{0}, \, \gcd(a,a_{0})=\gcd(b,b_{0})=1 \}.
\end{equation*}
Observe that
 \begin{equation} \label{Ap_intersect}
 \mu_{p,n}\left(\Ap'_{a_{0}}(\Psi) \cap \Ap'_{b_{0}}(\Psi)\right) \ll \prod_{i=1}^{n}\# \left\{(a_{i},b_{i}) \in Q:  \left| \frac{a_{i}}{a_{0}}-\frac{b_{i}}{b_{0}} \right|_{p} <  \Delta_i \right\} \delta_i.
\end{equation}
where
$$
\Delta_i=\max\{\psi_{i}(a_{0}),\psi_{i}(b_{0})\}\qquad\text{and}\qquad \delta_i=\min\{\psi_{i}(a_{0}),\psi_{i}(b_{0})\}\,.
$$
Fix any $i$ and without loss of generality suppose that $\Delta_i=\psi_{i}(a_{0}) \geq \psi_{i}(b_{0})=\delta_i$.
Note that since $p \nmid a_{0},b_{0}$ then the inequality in the above equation is equivalent to $(a_{i},b_{i}) \in Q$ satisfying
\begin{equation} \label{xxx}
|a_{i}b_{0}-b_{i}a_{0}|_{p} < \psi_{i}(a_{0}).
\end{equation}
To count solutions satisfying \eqref{xxx} we observe that such solutions also solve the congruence
\begin{equation} \label{modular}
b_{i}a_{0}-a_{i}b_{0} \equiv 0 \mod p^{t_{i}(a_{0})}.
\end{equation}
Let $d=\gcd(a_{0},b_{0})$, and let $a_{0}'=\frac{a_{0}}{d}$ and $b_{0}'=\frac{b_{0}}{d}$. Suppose that
\begin{equation*}
b_{i}a_{0}'-a_{i}b_{0}'=k,
\end{equation*}
for some integer $k$, with $|k| \leq 2\frac{a_{0}b_{0}}{d}$. The bounds on $k$ follow on the observation that
\begin{equation*}
|b_{i}a_{0}-a_{i}b_{0}| \leq 2a_{0}b_{0},
\end{equation*}
for all $(a_{i},b_{i}) \in Q$. Considering the congruence
\begin{equation*}
a_{i}b_{0}' \equiv b_{i}a_{0}'-k \mod a_{0}',
\end{equation*}
note that per $k$ there is at most one solution $a_{i}$ modulo $a_{0}'$, and so at most $\frac{2a_{0}}{a_{0}'}=2d$ possible $a_{i}$ with $|a_{i}| \leq a_{0}$. Clearly, each $b_{i}$ is uniquely determined by each $a_{i}$ and $k$, so per fixed $k$ there are at most $2d$ possible pairs $(a_{i},b_{i}) \in Q$. To solve \eqref{modular} we must have that
\begin{equation} \label{kzero}
k \equiv 0 \mod p^{t_{i}(a_{0})},
\end{equation}
of which there are at most
\begin{equation*}
\frac{4a_{0}b_{0}}{d\,p^{t_{i}(a_{0})}}+1
\end{equation*}
possible $k$ satisfying $|k| \leq 2a_{0}b_{0}/d$. Note that one such possible value of $k$ satisfying \eqref{kzero} is $k=0$. But this is impossible, since it implies that
\begin{equation*}
a_{0}'b_{i}=a_{i}b_{0}'.
\end{equation*}
Indeed, assuming $a_{0}>b_{0}$, we get that $a_{0} \neq 1$ and $\gcd(a_{0}', a_{i})=\gcd(a_{0}', b_{0}')=1$, and so we must have that $b_{i}a_{0}'-a_{i}b_{0}' \neq 0$. If $b_{0}>a_{0}$ then the argument is similar. Hence there are at most
\begin{equation*}
\frac{4a_{0}b_{0}}{d\,p^{t_{i}(a_{0})}}
\end{equation*}
values of $k$ that have corresponding solutions in $Q$, and so there are at most
\begin{equation*}
2d\frac{4a_{0}b_{0}}{d\,p^{t_{i}(a_{0})}}\ll a_{0}b_{0}\psi_{i}(a_{0})
\end{equation*}
pairs $(a_{i},b_{i}) \in Q$ that solve \eqref{xxx}. Combining this upper bound with \eqref{Ap_intersect} we have that
\begin{equation*}
\mu_{p,n}( \Ap'_{a_{0}}(\Psi) \cap \Ap'_{b_{0}}(\Psi) ) \ll a_{0}^{n}b_{0}^{n}\prod_{i=1}^{n} \psi_{i}(a_{0})\psi_{i}(b_{0}).
\end{equation*}
By (c), we have that
\begin{equation} \label{intersect_sum}
\sum_{a_{0},b_{0}=1}^{N} \mu_{p,n}(\Ap'_{a_{0}}(\Psi) \cap \Ap'_{b_{0}}(\Psi)) \ll \sum_{a_{0},b_{0}=1}^{N}a_{0}^{n}b_{0}^{n} \prod_{i=1}^{n} \psi_{i}(a_{0})\psi_{i}(b_{0}) \ll \left( \sum_{a_{0}=1}^{N} a_{0}^{n}\prod_{i=1}^{n}\psi_{i}(a_{0}) \right)^{2}.
\end{equation}
Now assuming the monotonicity of $\prod_{i=1}^{n}\psi_{i}(q)$, by (a), (b), we have that
\begin{equation}\label{eq20}
  \sum_{a_0=1}^N \mu_{p,n}(\Ap_{a_{0}}'(\Psi)) \asymp \sum_{a_0=1}^N \varphi(a_{0})^{n}\prod_{i=1}^{n}\psi_{i}(a_{0})
  \asymp \sum_{a_0=1}^N a_{0}^{n}\prod_{i=1}^{n}\psi_{i}(a_{0})\,.
\end{equation}
Hence \eqref{eq20} completes the convergence case of Theorem~\ref{wkpadic} via Lemma \ref{borel_cantelli_convergence}. In turn, \eqref{intersect_sum} and \eqref{eq20} together with Lemma \ref{borel_cantelli_divergence} proves that $\mu_{p,n}(\Wp_{n}'(\Psi))>0$ and finally applying Lemma \ref{haynes_0-1} completes the proof of Theorem~\ref{wkpadic}.

Regarding Theorem~\ref{DS_wkpadic}, Claim (a), completes the convergence case via Lemma \ref{borel_cantelli_convergence}. In the divergence case we note that Claim (b), \eqref{intersect_sum} and condition \eqref{DS_condition} imply that
\begin{equation*}
\limsup_{N \to \infty} \left( \frac{ \sum_{a_0=1}^N \varphi(a_{0})^{n}\prod_{i=1}^{n}\psi_{i}(a_{0})}{\sum_{a_{0}=1}^{N} a_{0}^{n}\prod_{i=1}^{n}\psi_{i}(a_{0})} \right)^{2}>0.
\end{equation*}
Hence, Lemma~\ref{borel_cantelli_divergence} is applicable and we get that $\mu_{p,n}(\Wp_{n}'(\Psi))>0$.
Applying Lemma~\ref{haynes_0-1} completes the proof of Theorem~\ref{DS_wkpadic}.

\section{Proof of Theorem \ref{weighted_jb}}

As with many Hausdorff dimension results we prove the upper bound and lower bound independently. As we are working with $\limsup$ sets of hyperrectangles defined by \eqref{Ap} we will naturally appeal to Theorem \ref{MTPRR} to get the lower bound. We start with the upper bound which takes advantage of a standard cover of $\Wp_{n}(\btau)$.

\medskip

\noindent\textit{Upper bound in Theorem \ref{weighted_jb}}.
Recall that
$
\Wp_{n}(\Psi)=\limsup_{a_{0} \to \infty} \Ap_{a_{0}}(\Psi)\,,
$
where $\Ap_{a_{0}}(\Psi)$ is given by \eqref{Ap}, that is
\begin{equation*}
\Ap_{a_{0}}(\Psi)= \bigcup_{\substack{(a_1,\dots,a_n) \in\Z^n\\ |a_{i}| \leq a_{0}\; (1\le i\le n)}} R_{a_{0},a_1,\dots,a_n}(\Psi)
\end{equation*}
and
$$
R_{a_{0},a_1,\dots,a_n}(\Psi)= \left\{\bx=(x_1,\dots,x_n) \in \Zp^{n}: \left|x_{i}-\frac{a_{i}}{a_{0}} \right|_{p} < \psi_{i}(a_{0})\;\;\text{for }1\leq i \leq n \right\}.
$$
Throughout this proof $\Psi=(q^{-\tau_1},\dots,q^{-\tau_n})$. Then for every $i\in\{1,\dots,n\}$ we can trivially cover $R_{a_{0},a_1,\dots,a_n}(\btau):=R_{a_{0},a_1,\dots,a_n}(\Psi)$ by a finite collection $\mathfrak{B}(a_{0})$ of balls of radius $a_{0}^{-\tau_{i}}$ such that
\begin{equation*}
\# \mathfrak{B}(a_{0}) \ll \prod_{j=1}^{n} \max \left\{ 1, \frac{a_{0}^{-\tau_{j}}}{a_{0}^{-\tau_{i}}}\right\}= a_{0}^{\sum_{\tau_{j}<\tau_{i}}(\tau_{i}-\tau_{j})},
\end{equation*}
where the power of $a_{0}$ on the R.H.S of the above inequality can be obtained by removing the cases where $\frac{a_{0}^{-\tau_{j}}}{a_{0}^{-\tau_{i}}}<1$. Let $s_{0}=\frac{n+1+\sum_{\tau_{j} < \tau_{i}}(\tau_{i}-\tau_{j})+ \delta}{\tau_{i}}$ for some $\delta>0$. Then for any $N>0$
\begin{align*}
\h^{s_{0}}(\Wp_{n}(\btau)) & \ll \sum_{a_{0} \geq N} \underset{1 \leq i \leq n}{\underset{|a_{i}| \leq a_{0}}{\sum}} \# \mathfrak{B}(a_{0})a_{0}^{-s_{0}\tau_{i}}, \\
& \ll \sum_{a_{0} \geq N} a_{0}^{n+ \sum_{\tau_{j}<\tau_{i}}(\tau_{i}-\tau_{j})-s_{0}\tau_{i}}, \\
&=\sum_{a_{0} \geq N} a_{0}^{-1-\delta}\to0\quad\text{as }N\to \infty.
\end{align*}
This implies that $\dim \Wp_{n}(\btau))\le s_0$. The above argument follows for any choice of $\tau_{i}$, hence we may choose the minimum over the set of all $\tau_{i}$ and so the upper bound for the dimension in Theorem~\ref{weighted_jb} follows on letting $\delta\to0$.

\medskip

\noindent\textit{Lower bound in Theorem \ref{weighted_jb}}. In order to apply Theorem \ref{MTPRR} we need to construct a set of resonant points that we can show are a locally ubiquitous system of rectangles. Let
\begin{equation*}
\hat R_{a_{0},i}= \left\{ \frac{a_{i}}{a_{0}} \in \Q : |a_{i}| \leq a_{0} \right\},
\end{equation*}
for each $1 \leq i \leq n$, and let $\hat R_{a_{0}}= \prod_{i=1}^{n} R_{a_{0},i}$. In line with the notation prior to Theorem \ref{MTPRR} let $J=\N$, and $\beta:J \to \R_{+}$ be $\beta(a_{0})=a_{0}$. Choose $\rho:\R_{+} \to \R_{+}$ to be $\rho(a_{0})=a_{0}^{-1}$, and choose the two sequences $l_{k}=M^{k}$, and $u_{k}=M^{k+1}$, for some fixed integer $M \ge2$ to be chosen later, so that
\begin{equation*}
J_{k}= \{ a_{0} \in \N: M^{k} \leq a_{0} \leq M^{k+1} \}.
\end{equation*}
In order to show such set of resonant points is a local ubiquitous system of rectangles we prove the following proposition.

\begin{proposition} \label{ubiquity_rectangle}
Let $\hat R_{a_{0}}$, $J$, $\beta$, and $\rho$ be defined as above. Let $\bal=(\alpha_{1}, \dots, \alpha_{n}) \in \R^{n}_{+}$ with each $\alpha_{i}>1$ be a vector satisfying
\begin{equation}\label{eqn31}
\sum_{i=1}^{n} \alpha_{i}=n+1.
\end{equation}
There are constants $M>1$ and $c_1>0$ such that for any ball $B \subset \Zp^{n}$
\begin{equation*}
\mu_{p,n} \left( B \cap \bigcup_{M^{k} \leq a_{0} \leq M^{k+1}} \Delta\left(R_{a_{0}}, \left( \frac{c_{1}}{M^{k+1}} \right)^{\bal} \right) \right) \geq \tfrac12\, \mu_{p,n}(B)
\end{equation*}
for all sufficiently large $k \in \N$.
\end{proposition}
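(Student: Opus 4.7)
The plan is to apply the $p$-adic Minkowski theorem (Lemma~\ref{mink}) to each point $\bx\in B$ to produce a rational approximation, and then to split the resulting approximation into a ``good'' case where the denominator already lies in $[M^k,M^{k+1}]$ and a ``bad'' case where it is smaller. The bad set will be controlled by a direct measure estimate. Throughout I take $M=p^m$ for a large integer $m$ to be chosen.

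For each $\bx=(x_1,\dots,x_n)\in B$ I apply Lemma~\ref{mink} with linear forms $L_i(a_0,\bp)=x_i a_0-a_i$, bounds $H_0=\dots=H_n=M^{k+1}$, exponents $\btau=\bal$ (using $\sum\alpha_i=n+1$), and $\sigma_i=1$. This produces a nonzero integer vector $(a_0,a_1,\dots,a_n)$ with $|a_i|\le M^{k+1}$ and $|x_i a_0-a_i|_p\le pM^{-(k+1)\alpha_i}$, and the hypothesis $\alpha_i>1$ forces $a_0\ne 0$ because an integer of bounded real magnitude and sufficiently small $p$-adic magnitude must vanish. Writing $a_0=p^e\tilde a_0$ with $\gcd(\tilde a_0,p)=1$, the approximation inequality forces $p^e\mid a_i$; setting $\tilde a_i=a_i/p^e$ yields the reduced approximation
\begin{equation*}
|x_i-\tilde a_i/\tilde a_0|_p\le p^{e+1}M^{-(k+1)\alpha_i}.
\end{equation*}

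In the good case $\tilde a_0\in[M^k,M^{k+1}]$, the constraint $p^e\tilde a_0\le M^{k+1}$ gives $e\le m$, so $p^{e+1}\le pM$; choosing $c_1:=(pM)^{1/\min_i\alpha_i}$ ensures $c_1^{\alpha_i}\ge pM$ for all $i$, so $\bx$ lies in the rectangle $\prod_i B(\tilde a_i/\tilde a_0,(c_1/M^{k+1})^{\alpha_i})$ appearing in $\Delta(R_{a_0},(c_1/M^{k+1})^{\bal})$ for $a_0=p^e\tilde a_0\in[M^k,M^{k+1}]$. In the bad case $\tilde a_0<M^k$, I would bound the measure of the corresponding $\bx\in B$ by summing the rectangle measures over all admissible triples $(\tilde a_0,\tilde\bp,e)$ with $\tilde a_0<M^k$, $\gcd(\tilde a_0,p)=1$, and $p^e\tilde a_0\le M^{k+1}$. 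A direct count shows that for each fixed $e$ this sum is bounded by $\mu_{p,n}(B)\cdot p^n/M$; the admissible values of $e$ contribute at most a factor of order $m$ in the main range, with geometric decay for $e>m$. Choosing $m$ large enough that the exponential $M=p^m$ dominates this combined factor gives a bad-set measure at most $\tfrac12\mu_{p,n}(B)$ for all sufficiently large $k$, the lower-order contribution from denominators $\tilde a_0$ smaller than $1/r$ (where $r$ is the radius of $B$) vanishing as $k\to\infty$.

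The main obstacle is balancing two effects of the $p$-adic reduction. The loss factor $p^{e+1}$ in the approximation inequality forces $c_1$ to depend on $M$, while the bad-set estimate gains a linear factor in $m$ from summing over $e$; both are tolerable because the exponential growth of $M=p^m$ in $m$ dominates these polynomial dependencies, and this is possible precisely because the hypothesis $\min_i\alpha_i>1$ is strict. A secondary technical point is the size constraint $|\tilde a_i|\le\tilde a_0$ required for $\tilde\bp/\tilde a_0\in R_{\tilde a_0}$; this can be handled by reverting to the unreduced denominator $a_0=p^e\tilde a_0\in[M^k,M^{k+1}]$, for which the Minkowski bound $|a_i|\le M^{k+1}\le Ma_0$ suffices after a modest inflation of $c_1$.
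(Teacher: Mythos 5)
Your overall strategy—apply the $p$-adic Minkowski lemma pointwise in $B$, split into a ``good'' case where the denominator lands in $[M^k,M^{k+1}]$ and a ``bad'' case whose measure is controlled directly, then choose $M=p^m$ large—is the same as the paper's. However, there is a genuine gap in your handling of the constraint that the rational approximation actually lies in the resonant set.

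Recall that $\hat R_{a_0,i}=\{a_i/a_0:|a_i|\le a_0\}$, so a point $\bx$ contributes to $\Delta(R_{a_0},(c_1/M^{k+1})^{\bal})$ only if, for each $i$, there is some $a_i$ with $|a_i|\le a_0$ (not $|a_i|\le Ma_0$) satisfying the closeness condition. You apply Lemma~\ref{mink} with symmetric heights $H_0=\dots=H_n=M^{k+1}$, which yields $|a_i|\le M^{k+1}$ while $a_0$ could be as small as $M^k$. Your proposed fix—``the Minkowski bound $|a_i|\le M^{k+1}\le Ma_0$ suffices after a modest inflation of $c_1$''—does not work: inflating $c_1$ only widens the balls centred at the points of $\hat R_{a_0,i}$, it does not add new points to $\hat R_{a_0,i}$. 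If $|a_i|>a_0$, then $a_i/a_0\notin\hat R_{a_0,i}$, and the nearest admissible $b_i/a_0$ with $|b_i|\le a_0$ must satisfy $b_i\equiv a_i\pmod{p^t}$ with $p^t\asymp M^{(k+1)\alpha_i}>M^{k+1}\ge a_0$, so no such $b_i$ with $|b_i|\le a_0$ need exist. The paper avoids this entirely by applying Minkowski asymmetrically with $H_0=M^{k+1}$ but $H_1=\dots=H_n=M^k$; then in the good case $M^k<a_0\le M^{k+1}$ one automatically has $|a_i|\le M^k<a_0$, so $a_i/a_0\in\hat R_{a_0,i}$ with no further work. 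Replacing your symmetric application by this asymmetric one (with $\sigma_i$ and the resulting $T\asymp M^{k+1/(n+1)}$ adjusted accordingly) repairs the argument; the rest of your measure bookkeeping for the bad set is in the right spirit, though the paper organises the bad set differently (splitting it into ``$p$-adic valuation of $a_0$ too large'' and ``$a_0\le M^k$'' rather than a single condition on the reduced denominator $\tilde a_0$).
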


\begin{proof}
Fix some ball $B=B(\by,r)$ for some $\by \in \Zp^{n}$ and $r \in \{p^{i}: i \in \N\cup\{0\} \}$. We will assume that $k$ is sufficiently large so that $M^{k}r\geq 1$. In view of \eqref{eqn31} and the fact that $\alpha_i>1$ for all $i$, by Lemma~\ref{mink}, we have that for any $\bx=(x_{1}, \dots, x_{n}) \in B$ there exists $(a_{0},\dots,a_{n}) \in \Z^{n+1}$, satisfying
\begin{equation}\label{eqn+2}
|a_{i}| \;\le\; M^{k}\qquad(1 \leq i \leq n),\qquad 0<a_0< M^{k+1}
\end{equation}
and
\begin{equation} \label{eq24}
|a_{0}x_{i}-a_{i}|_{p}<p\left(M^{k+\frac{1}{n+1}}\right)^{-\alpha_{i}}\qquad(1\leq i \leq n).
\end{equation}
Since $\alpha_{i}>1$ for each $1 \leq i \leq n$, \eqref{eq24} combined with $0<a_{0} \leq M^{k+1}$ implies that $|a_{i}|_{p} \leq |a_{0}|_{p}$ for each $1 \leq i \leq n$, provided that $k$ is sufficiently large. Let $\lambda$ be the integer such that $|a_{0}|_{p}=p^{-\lambda}$. Write $a_{0}'=a_{0}p^{-\lambda}$ and $a_{i}'=a_{i}p^{-\lambda}$. Since $|a_i|_p\le |a_0|_p$, we have that $a_{0}',a_{i}' \in \Z$. Also, by definition,
\begin{equation} \label{reduced_points}
 0<a'_{0}\leq p^{-\lambda}M^{k+1}, \quad |a'_{i}|\le p^{-\lambda}M^{k}
\end{equation}
for each $1 \leq i \leq n$ and, by \eqref{eq24} and the fact that $\gcd(a'_0,p)=1$, that
\begin{align}
\left|x_{i}-\frac{a_{i}'}{a_{0}'}\right|_{p}&=\left|a'_0x_{i}-{a_{i}'}\right|_{p}=p^{\lambda}|a_{0}x_{i}-a_{i}|_{p}
<p^{\lambda+1}\left(M^{k+\frac{1}{n+1}}\right)^{-\alpha_{i}} \label{eq25}
\end{align}
for $1 \leq i \leq n$. We want to remove the $a_{0}'$ values that are divisible by too high a power of $p$, that is $|a_{0}'|_{p}  < p^{-\lambda_{0}}$ for some fixed $\lambda_{0} \in \N$ to be chosen later. We consider the integer vectors $(a_{0}', \dots , a_{n}')$ satisfying \eqref{reduced_points} such that
\begin{equation*}
\left( \frac{a_{1}'}{a_{0}'}, \dots , \frac{a_{n}'}{a_{0}'} \right) \in B(\by,r)\,,
\end{equation*}
where $r$ is a poser of $p$.
Considering the congruence equations for $a_{0}'$ fixed we have that there are
\begin{equation*}
\left(2p^{-\lambda}M^{k}r+1 \right)^{n}< \left(3 p^{-\lambda} M^{k}r \right)^{n}
\end{equation*}
such points. Hence
\begin{align*}
\mu_{p,n} \left( B \cap \underset{\lambda \geq \lambda_{0}}{\bigcup} \underset{ 0< a_{0}' \leq \frac{M^{k+1}}{p^{\lambda}}}{\bigcup_{|a'_{i}|\leq \frac{M^{k}}{p^{\lambda}}}} \bigcup_{\left( \frac{a_{1}'}{a_{0}'}, \dots , \frac{a_{n}'}{a_{0}'} \right) \in B} \prod_{i=1}^{n} B\left( \frac{a_{i}'}{a_{0}'}, p^{\lambda+1}\left(M^{k+\frac{1}{n+1}}\right)^{-\alpha_{i}} \right) \right)&\\
& \hspace{-6cm}\leq \sum_{\lambda \geq \lambda_{0}} \frac{M^{k+1}}{p^{\lambda}} \left(3 \frac{M^{k}}{p^{\lambda}}r \right)^{n} p^{n \lambda + n} M^{-k(n+1)-1}, \\
&  \hspace{-4cm} = \sum_{\lambda \geq \lambda_{0}} \mu_{p,n}(B)3^{n}p^{n-\lambda}, \\
&  \hspace{-2cm} \leq 3^{n}\frac{p^{n+1-\lambda_{0}}}{p-1}\mu_{p,n}(B).
\end{align*}
Taking $\lambda_{0}$ sufficiently large, e.g. $p^{\lambda_{0}} > 4\frac{3^{n}p^{n+1}}{p-1}$, then we have that
\begin{equation}\label{eqn+0}
\mu_{p,n} \left( B \cap \underset{\lambda \geq \lambda_{0}}{\bigcup} \underset{ 0< a_{0}' \leq \frac{M^{k+1}}{p^{\lambda}}}{\bigcup_{|a_{i}|\leq \frac{M^{k}}{p^{\lambda}}}} \bigcup_{\left( \frac{a_{1}'}{a_{0}'}, \dots , \frac{a_{n}'}{a_{0}'} \right) \in B} \prod_{i=1}^{n} B\left( \frac{a_{i}'}{a_{0}'}, p^{\lambda_{0}+1}\left(M^{k+\frac{1}{n+1}}\right)^{-\alpha_{i}} \right) \right) \leq \tfrac{1}{4} \mu_{p,n}(B).
\end{equation}
Using similar calculations to those of above we have that
\begin{equation}\label{eqn+1}
\mu_{p,n}\left( B \cap\hspace*{-3ex} \underset{0 < a_{0} \leq M^{k}: |a_{0}|_{p} \geq p^{-\lambda_{0}}}{\bigcup_{|a_{i}| \leq M^{k}}} \hspace*{-3ex}\prod_{i=1}^{n} B\left( \frac{a_{i}}{a_{0}} , p^{\lambda_{0}+1}\left(M^{k+\frac{1}{n+1}}\right)^{-\alpha_{i}} \right) \right) \leq \frac{3^{n}p^{n\lambda_{0}+n}}{M} \mu_{p,n}(B)\le\tfrac14\mu_{p,n}(B)
\end{equation}
provided that $M>4 \, 3^{n} p^{n\lambda_{0}+n}$. Combining \eqref{eqn+0} and \eqref{eqn+1} with the fact that \eqref{eqn+2} and \eqref{eq24} can be solved in integers $(a_0,\dots,a_n)$ for all $\bx$, we get that
\begin{equation*}
\mu_{p,n} \left( B \cap \underset{M^{k} < a_{0} \leq M^{k+1}: |a_{0}|_{p} \geq p^{-\lambda_{0}}}{\bigcup_{|a_{i}| \leq M^{k} }}\prod_{i=1}^{n}B\left( \frac{a_{i}}{a_{0}}, p^{\lambda_{0}+1}\left(M^{k+\frac{1}{n+1}}\right)^{-\alpha_{i}} \right) \right) \geq \tfrac12\, \mu_{p,n}(B).
\end{equation*}
Taking the constant
\begin{equation*}
c_{1}=\max_{1 \leq i \leq n} \, p^{\frac{\lambda_{0}+1}{\alpha_{i}}}M^{1-\frac{1}{n+1}}
\end{equation*}
completes the proof.
\end{proof}

Given Proposition \ref{ubiquity_rectangle}, we have that $(R_{a_{0}}, \beta)$ is a local ubiquitous system with respect to $(\rho,\bal)$, provided $\sum_{i=1}^{n}\al_{i}=n+1$. Using the setup provided for Theorem \ref{MTPRR} let $\btau=(\tau_{1}, \dots, \tau_{n})=(\al_{1}+t_{1}, \dots, \al_{n}+t_{n}) \in \R^{n}_{+}$, then $\Wp_{n}(\btau)=W(\bt)$. Without loss of generality let $\tau_{1} \geq \dots \geq \tau_{n}$. Define $\al_{i}$ recursively as
\begin{equation*}
\al_{i}=\min \left\{ \tau_{i}, \frac{n+1-\sum_{j=n-i+1}^{n}\al_{j}}{n-i} \right\}.
\end{equation*}
Since $\sum_{i=1}^{n}\tau_{i}>n+1$ and $\sum_{i=1}^{n}\al_{i}=n+1$ such recursive formula is possible and we have that $\al_{i} \leq \tau_{i}$ for each $1 \leq i \leq n$, so $\bt$ is well defined. Since $\tau_{1} \geq \dots \geq \tau_{n}$ we have that $\al_{1} \geq \dots \geq \al_{n}$, and furthermore there exists $k \in \{1, \dots, n\}$ such that for all $1 \leq i \leq n-k$
\begin{equation*}
\al_{i}=\frac{n+1-\sum_{j=n-k+1}^{n}\al_{j}}{n-k}.
\end{equation*}
Such observation follows by noting that at least
\begin{equation*}
\al_{1}=n+1-\sum_{j=n-1}^{n}\al_{j}
\end{equation*}
by the fact that $\sum_{i=1}^{n}\al_{i}=n+1$. Note that for each metric space $X_{i}=\Zp$ the Haar measure $\mu_{p}$ is a $1$-Ahlfors probability measure. With reference to Theorem \ref{MTPRR}, consider the following three cases:
\begin{enumerate}[i)]
\item $A_{i} \in \{ \al_{1}, \dots \al_{n-k} \}$: For these values of $A_{i}$ we have that
\begin{equation*}
\begin{array}{ccc}
K_{1}=\{1, \dots , n-k \}, & K_{2}=\{ n-k+1, \dots , n\}, & K_{3}=\emptyset.
\end{array}
\end{equation*}
Applying Theorem \ref{MTPRR} we get that
\begin{align*}
\dim \Wp_{n}(\btau) & \geq \min_{A_{i}} \left\{ \frac{(n-k)\al_{i}+(n-(n-k+1)+1)\al_{i}- \sum_{j=n-k}^{n}t_{j}}{\al_{i}} \right\}, \\
&=\min_{A_{i}} \left\{ n-\frac{\sum_{j=n-k+1}^{n}t_{j}}{\al_{i}} \right\}.
\end{align*}
Since $t_{i}=0$ for $n-k < i \leq n$ we have that $\dim \Wp_{n}(\btau) \geq n$.
\item $A_{i} \in \{ \al_{n-k+1}, \dots , \al_{n} \}$: For such values of $A_{i}$ observe that
\begin{equation*}
\begin{array}{ccc}
K_{1}=\{1, \dots , i \}, & K_{2}=\{i+1, \dots , n\}, & K_{3}=\emptyset.
\end{array}
\end{equation*}
Applying Theorem \ref{MTPRR} we have, in this case,
\begin{equation*}
\dim \Wp_{n}(\btau) \geq \min_{A_{i}}\left\{ \frac{i\al_{i} + (n-i)\al_{i} - \sum_{j=i+1}^{n} t_{j}}{\al_{i}}  \right\}.
\end{equation*}
Similarly to the previous case, since $t_{j}=0$ for $n-k+1 \leq i \leq n$ the r.h.s of the above equation is $n$, the maximal dimension of $\Wp_{n}(\btau)$.
\item $A_{i} \in \{\tau_{1}, \dots , \tau_{n}\}$: Since $\tau_{i}=\al_{i}$ for $n-k+1 \leq i \leq n$, ii) covers such result. So we only need to consider the set of $A_{i} \in \{ \tau_{1}, \dots \tau_{n-k} \}$. If $A_{i}$ is contained in such set, then
\begin{equation*}
\begin{array}{ccc}
K_{1}=\emptyset, & K_{2}=\{ i, \dots , n\}, & K_{3}=\{ 1, \dots , i-1\}.
\end{array}
\end{equation*}
Thus, by Theorem \ref{MTPRR}, we have that
\begin{align*}
\dim \Wp_{n}(\btau) & \geq \min_{A_{i}} \left\{ \frac{(n-i+1)\tau_{i}+\sum_{j=1}^{i-1}a_{j}-\sum_{j=i}^{n}t_{j}}{\tau_{i}} \right\}, \\
&= \min_{A_{i}} \left\{ \frac{(n-i+1)\tau_{i}+(i-1)\left( \frac{n+1-\sum_{j=n-k+1}^{n}a_{j}}{n-k} \right)-\sum_{j=i}^{n-k}(\tau_{j}-a_{j})-\sum_{j=n-k+1}^{n}t_{j}}{\tau_{i}} \right\}, \\
& = \min_{A_{i}}\left\{ \frac{(n-i+1)\tau_{i}+(n-k)\left( \frac{n+1-\sum_{j=n-k+1}^{n}a_{j}}{n-k} \right)-\sum_{j=i}^{n-k}\tau_{j}-\sum_{j=n-k+1}^{n}t_{j}}{\tau_{i}} \right\}, \\
&=\min_{A_{i}} \left\{ \frac{n+1+\sum_{j=i}^{n}(\tau_{i}-\tau_{j})}{\tau_{i}} \right\},
\end{align*}
since $a_{j}+t_{j}=\tau_{j}$.
\end{enumerate}
These are all possible choices of $A_{i}$. The proof of Theorem \ref{weighted_jb} is thus complete.

\section{Dirichlet-style Theorem on $p$-adic manifolds}

This section provides a full measure statement needed to deploy a Mass Transference Principle for the proofs of Theorems~\ref{lowerbnd}--\ref{lowerbnd_manifold}.

\begin{theorem} \label{diri}
Let $\bff=(f_{1}, \dots , f_{m}):\U \to \Zp^{m}$ be a map defined on an open subset $\U \subseteq \Zp^{d}$, $\bx\in\U$ and suppose that $\bff$ is \DQE{} at $\bx$ and let $\lambda$ be given by
\begin{align}
\max\left\{1,\,\underset{1 \leq j \leq m}{\max_{1 \leq i \leq d}} \left| \frac{\partial f_{j}}{\partial x_{i}}(\bx) \right|_{p}\right\}=p^{\lambda}\,. \label{first_order_constant}
\end{align}
Let $\btau=(\tau_{1}, \dots, \tau_{m}) \in \R^{m}_{+}$, $\bv=(v_{1}, \dots , v_{d}) \in \R^{d}_{+}$ and
\begin{equation*}
\begin{array}{cc}
\sum_{i=1}^{m}\tau_{i} < m+1, & \tau_{i} >1, \,\, (1 \leq i \leq m), \\[1ex]
\sum_{i=1}^{d}v_{i}=n+1-\sum_{i=1}^{m}\tau_{i}, & v_{i}>1, \, \, (1 \leq i \leq d)\,.
\end{array}
\end{equation*}
Then there exist $H_{0} \in \N$ such that for all $H > H_{0}$ and some $k\in\Z$ the following system
\begin{equation}
\begin{cases}
\left|x_{i} -\frac{a_{i}}{a_{0}}\right|_{p} < p^{(n+m\lambda)/d}p^kH^{-v_{i}} \qquad\qquad\qquad\quad (1 \leq i \leq d), \\[2ex]
\left|f_{j}\left( \frac{a_{1}}{a_{0}}, \dots , \frac{a_{d}}{a_{0}} \right) - \frac{a_{d+j}}{a_{0}} \right|_{p} < (p^{-k}H)^{-\tau_{j}} \, \qquad\quad (1 \leq j \leq m), \label{tau_{2}} \\[2ex]
\underset{0 \leq i \leq n}{\max}  |a_{i}| \leq p^{-k} H
\end{cases}
\end{equation}
has a solution $(a_{0}, \dots, a_{n}) \in \Z^{n+1}$ satisfying
\begin{equation}\label{a}
\text{$(a_{0},p)=1$,\qquad $\gcd(a_{0}, \dots, a_{n})=1$\qquad and\qquad $\left( \tfrac{a_{1}}{a_{0}}, \dots , \tfrac{a_{d}}{a_{0}} \right)\in\U$.}
\end{equation}
\end{theorem}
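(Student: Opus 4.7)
The plan is to apply Lemma~\ref{mink} ($p$-adic Minkowski) to a system of $n$ linear forms in the $n+1$ integer unknowns $a_0,\ldots,a_n$ that linearises the approximation around $\bx$, then invoke the \DQE{} property of $\bff$ to convert the resulting linear bounds into the required nonlinear bounds on the manifold, and finally divide out by $p^k$ to enforce $(a_0,p)=1$.

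I would set up the forms
\begin{equation*}
L_i(\vec a)=a_0 x_i-a_i\quad(1\le i\le d),
\end{equation*}
\begin{equation*}
L_{d+j}(\vec a)=p^{\lambda}\Bigl(a_0\bigl(f_j(\bx)-\textstyle\sum_i\partial_i f_j(\bx)\,x_i\bigr)+\textstyle\sum_i\partial_i f_j(\bx)\,a_i-a_{d+j}\Bigr)\quad(1\le j\le m),
\end{equation*}
all of which have $\Zp$-coefficients thanks to the scaling by $p^{\lambda}$. Applying Lemma~\ref{mink} with $H_i=H$, approximation exponents $(v_1,\dots,v_d,\tau_1,\dots,\tau_m)$ which sum to $n+1$ by hypothesis, and shifts $\sigma_i=(n+m\lambda)/d$ and $\sigma_{d+j}=-\lambda$ summing to $n$, produces, for $H$ sufficiently large, a non-zero $(\tilde a_0,\ldots,\tilde a_n)\in\Z^{n+1}$ with $|\tilde a_i|\le H$, $|L_i(\tilde{\vec a})|_p\le p^{(n+m\lambda)/d}H^{-v_i}$ and $|L_{d+j}(\tilde{\vec a})|_p\le p^{-\lambda}H^{-\tau_j}$.

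The hypotheses $\tau_j>1$ and $\sum_j\tau_j<m+1$ together force $\tau_j<2$ for every $j$. Setting $k:=v_p(\tilde a_0)$, dividing the vector by $p^k$ to obtain $(a_0,\dots,a_n)$ (justified in the next paragraph) and writing $\by=(a_1/a_0,\dots,a_d/a_0)$, openness of $\U$ places $\by\in\U$ for $H$ large. The identity
\begin{equation*}
a_0 f_j(\by)-a_{d+j}=\tfrac{L_{d+j}(\vec a)}{p^{\lambda}}+a_0\bigl[f_j(\by)-f_j(\bx)-\textstyle\sum_i\partial_i f_j(\bx)(y_i-x_i)\bigr]
\end{equation*}
combined with the \DQE{} inequality \eqref{eqn007} bounds the bracketed term by $C\max_i|L_i(\vec a)|_p^2\le C\,p^{2k+2(n+m\lambda)/d}H^{-2v_i}$, which for $H$ sufficiently large (uniformly in $k\le\log_p H$) is dominated by the target $(p^{-k}H)^{-\tau_j}$ because $v_i>1$. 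The linear term satisfies $|L_{d+j}(\vec a)/p^{\lambda}|_p\le p^{k}H^{-\tau_j}\le(p^{-k}H)^{-\tau_j}$, so the ultrametric inequality gives the required approximation of $f_j$ by $a_{d+j}/a_0$.

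The step I expect to require the most care is the normalisation ensuring $(a_0,p)=1$, i.e., justifying the division by $p^k$. The smallness of $|L_i(\tilde{\vec a})|_p$ immediately forces $v_p(\tilde a_i)\ge k$ for $1\le i\le d$. The potentially delicate case is $d+1\le i\le n$, where the naive bound on the bracketed expression in $L_{d+j}$ gives only $v_p(\tilde a_{d+j})\ge k-\lambda$; however, the rewriting
\begin{equation*}
\tfrac{L_{d+j}(\tilde{\vec a})}{p^{\lambda}}=\tilde a_0 f_j(\bx)-\textstyle\sum_i\partial_i f_j(\bx)\,L_i(\tilde{\vec a})-\tilde a_{d+j}
\end{equation*}
exposes the key cancellation: $\tilde a_0 f_j(\bx)$ has $v_p\ge k$ since $f_j(\bx)\in\Zp$; each summand $\partial_i f_j(\bx)L_i(\tilde{\vec a})$ has $v_p\ge v_i\log_p H-(n+m\lambda)/d-\lambda$, which is $\ge k$ once $H$ is large enough (uniformly in $k\le\log_p H$, using $v_i>1$); and $L_{d+j}/p^\lambda$ has $v_p\ge\tau_j\log_p H\ge k$. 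Hence $v_p(\tilde a_{d+j})\ge k$ too, so dividing the entire vector by $p^k$ yields $(a_0,\dots,a_n)\in\Z^{n+1}$ with $(a_0,p)=1$ and $|a_i|\le p^{-k}H$. Extracting any remaining common integer factor (necessarily coprime to $p$) finally gives $\gcd(a_0,\dots,a_n)=1$ without affecting the previous inequalities.
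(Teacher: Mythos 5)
Your proposal follows the paper's proof essentially verbatim: same choice of linear forms (with the $p^\lambda$ rescaling), same $\bsigma=((n+m\lambda)/d,\dots,(n+m\lambda)/d,-\lambda,\dots,-\lambda)$ fed into Lemma~\ref{mink}, same linearisation/\DQE{} step, and same normalisation by dividing out the exact power of $p$ in $a_0$. The arithmetic estimates you sketch (using $\tau_j<2$, $v_i>1$ and $p^k\le H$ to absorb the quadratic error and the $p^\lambda$ factors) are a valid way to organise the case analysis the paper does with conditions $(\alpha1),(\alpha2),(\beta),(\gamma)$.

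There is, however, one step that as written would fail: you set $k:=v_p(\tilde a_0)$ and throughout the normalisation paragraph rely on the bound $k\le\log_p H$, but Lemma~\ref{mink} only produces a non\emph{-zero} vector $(\tilde a_0,\dots,\tilde a_n)$, not one with $\tilde a_0\neq0$. If $\tilde a_0=0$ then $k=\infty$ and every subsequent inequality is vacuous. You need to rule this out first: if $\tilde a_0=0$ then $|L_i(\tilde{\vec a})|_p=|\tilde a_i|_p<p^{(n+m\lambda)/d}H^{-v_i}<H^{-1}$ together with $|\tilde a_i|\le H$ and $v_i>1$ forces $\tilde a_i=0$ for $1\le i\le d$, and then the remaining forms give $|\tilde a_{d+j}|_p<H^{-\tau_j}<H^{-1}$ which forces $\tilde a_{d+j}=0$ for $1\le j\le m$ since $\tau_j>1$, contradicting non-vanishing. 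This is precisely the argument the paper gives before defining $k$; once it is inserted your proof is complete. (A second, lesser omission: you should note that $\by=(a_1/a_0,\dots,a_d/a_0)$ lands not merely in $\U$ but in the ball $B(\bx,\varepsilon)$ on which the \DQE{} inequality \eqref{eqn007} actually applies; this follows from the same size estimate you already invoke, but it should be said.)
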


\begin{proof}
By Lemma \ref{mink} with $\bsigma=((n+m\lambda)/d,\dots,(n+m\lambda)/d,-\lambda,\dots,-\lambda)$, $H_0=\dots=H_n=H$ and $T=H+1$, for any integer $H \ge H_{\bsigma}^{1/(n+1)}$ the following system
\begin{align}
\begin{cases}
|b_{0}x_{i}-b_{i}|_{p} &< p^{(n+m\lambda)/d}H^{-v_{i}} \quad (1 \leq i \leq d), \\[1.5ex]
\displaystyle\left| b_{0}p^{\lambda}f_{j}(\bx)-\sum_{i=1}^{d}p^{\lambda}\dfrac{\partial f_{j}}{\partial x_{i}}(\bx) \left( b_0x_{i}-b_{i} \right)-p^{\lambda}b_{d+j} \right|_{p} &< p^{-\lambda}H^{-\tau_{j}} \hspace*{8ex} (1 \leq j \leq m), \label{ineq2}\\[1.5ex]
\underset{0\leq i\leq n}{\max} |b_{i}|  & \leq H
\end{cases}
\end{align}
has a non-zero integer solution $(b_{0},b_{1},\dots, b_{n}) \in \Z^{n+1}$. Without loss of generality we can assume that $d=\gcd(b_{0},b_{1},\dots, b_{n})$ is a power of $p$ as otherwise we can divide \eqref{ineq2} through by any other prime powers in the factorisation of $d$ without affecting \eqref{ineq2}. Let $C>0$ and $0<\varepsilon<1$ be the constants that satisfy Definition~\ref{DQE} for all $f_j$ simultaneously. In particular, we have that $B(\bx,\varepsilon) \subseteq \U$. Let $v_{\min}:=\min_{1 \leq i \leq d}v_{i}$ and $\tau_{\max}:=\max_{1 \leq j \leq m} \tau_{j}$.
Let $H_{0}$ be defined as follows
\begin{equation*}
H_{0}:=\max \left\{ \begin{array}{cc}
 C^{\frac{2}{2v_{\min} - \tau_{\max}}}, & (\alpha1) \\
 C^{\frac{1}{v_{\min} - 1}}, & (\alpha2) \\
  (\varepsilon^{-1}p^{(n+m\lambda)/d})^{\frac{1}{v_{\min}}-1}, & (\beta) \\
   p^{\frac{n+n\lambda}{d(v_{\min} -1)}}, & (\gamma)\\
   H_{\bsigma}^{1/(n+1)} & (\delta)
   \end{array} \right\}\,.
\end{equation*}
Note that $H_{0}$ is a well defined positive real number since $v_{\min}-1>0$ and
$2v_{\min}-\tau_{\max}>0$. The latter follows from the facts that each $\tau_{j}>1$ and $\sum_{j=1}^{m}\tau_{j}<m+1$ and so $\tau_{j}<2$, and the condition that each $v_{i}>1$.
Note that $(\gamma)$ implies that $p^{(n+m\lambda)/d}H^{-v_{i}}<H^{-1}$ whenever $H> H_0$. We will use this observation a few times in this proof.

We will now prove two statements concerning the integer solution $(b_{0},b_{1},\dots, b_{n})$ to \eqref{ineq2}.
First we verify that $b_0\neq0$. Suppose the contrary, that is $b_{0}=0$. Then by the first inequality of \eqref{ineq2} we have that $|b_{i}|_{p}<p^{(n+m\lambda)/d}H^{-v_{i}}<H^{-1}$. As $|b_{i}| \leq H$ and $H>H_{0}$, we have that $b_{i}=0$ for $1 \leq i \leq d$. Considering the second set of inequalities of \eqref{ineq2}, for each $1 \leq j \leq m$ we have that $|b_{d+j}|_{p}<H^{-\tau_{j}}$ which also forces us to conclude that $b_{d+j}=0$,  since $\tau_{j}>1$ for each $1 \leq j \leq m$. Thus $(b_{0}, b_{1}, \dots, b_{n})=\boldsymbol{0}$, a contradiction. So we must have that $b_{0}\neq 0$.

Now we show that $\frac{b_{i}}{b_{0}}$ is a $p$-adic integer for all $1 \leq i \leq d$. Since $b_{0} \neq 0$, we may rewrite the first inequality of \eqref{ineq2} to get
 \begin{equation*}
 |b_{0}|_{p}\left| x_{i}-\frac{b_{i}}{b_{0}} \right|_{p} <p^{(n+m\lambda)/d}H^{-v_{i}}, \quad 1 \leq i \leq d.
 \end{equation*}
 Suppose that $\left|\frac{b_{i}}{b_{0}}\right|_{p}>1$ for some $1 \leq i \leq d$, then $\left|\frac{b_{i}}{b_{0}}\right|_{p}>|x_{i}|_{p}$ since $\bx \in \U \subseteq \Zp^{d}$ so, by the strong triangle inequality, we have that
 \begin{equation*}
 |b_{i}|_{p}=|b_{0}|_{p} \max \left\{ |x_{i}|_{p},\left|\frac{b_{i}}{b_{0}}\right|_{p} \right\}=
 |b_{0}|_{p}\left| x_{i}-\frac{b_{i}}{b_{0}} \right|_{p}<p^{(n+m\lambda)/d}H^{-v_{i}}<H^{-1}
 \end{equation*}
for $H>H_0$. Such inequality fails unless $b_{i}=0$, since $|b_{i}| \leq H$. Thus, $\frac{b_{i}}{b_{0}} \in \Zp$ for all $1 \leq i \leq d$.

Now we are ready to construct $(a_0,\dots,a_n)$ with $(a_{0},p)=1$. Let $k\ge0$ be the unique integer such that $p^{k}|b_{0}$ but $p^{k+1} \nmid b_{0}$. Then, since $\frac{b_{i}}{b_{0}} \in \Zp$ so we have that $p^{k}|b_{i}$ for all $1 \leq i \leq d$. By \eqref{ineq2}, we get that
 \begin{align*}
 |b_{d+j}|_p &\le \max\left\{\left| b_0f_{j}(\bx)-\sum_{i=1}^{d}\frac{\partial f_{j}}{\partial x_{i}}(\bx) \left(b_0x_{i}-b_{i}\right)-b_{d+j}\right|_{p}, |b_0f_{j}(\bx)|_p,\left|\sum_{i=1}^{d}\frac{\partial f_{j}}{\partial x_{i}}(\bx) \left(b_0x_{i}-b_{i}\right)\right|_p\right\}\\[2ex]
  & \le \max\left\{H^{-\tau_{j}},p^{-k},p^\lambda p^{(n+m\lambda)/d}H^{-v_{\min}} \right\}=p^{-k},
 \end{align*}
since $\tau_j>1$ and $H>H_0$. Therefore, $p^k|b_{d+j}$ and we have that $\frac{b_{d+j}}{b_{0}} \in \Zp$ for each $1 \leq j \leq m$. In particular we have that $d=\gcd(b_{0},b_{1},\dots, b_{n})=p^k$. For $0 \leq i \leq n$ define the numbers $a_{i}=p^{-k}b_{i}$, which, by what we have proven above, are all integers satisfying $\gcd(a_{0},a_{1},\dots, a_{n})=1$ and, by the choice of $k$, $(a_{0},p)=1$.
By the third inequality of \eqref{ineq2}, we have that $\max_{0 \leq i \leq n} |a_{i}| \leq p^{-k}H$, which verifies the third inequality in \eqref{tau_{2}}. Further, using the first set of inequalities of \eqref{ineq2}, we get that
 \begin{equation}\label{eqn45}
 |a_{0}x-a_{i}|_{p} = |p^{-k}b_{0}x-p^{-k}b_{i}|_{p} =p^{k}|b_{0}x-b_{i}|_{p}
 <p^{(n+m\lambda)/d} p^kH^{-v_{i}}
 \end{equation}
for each $1 \leq i \leq d$, since $v_i>1$. This verifies the first set of inequalities in \eqref{tau_{2}}.

By \eqref{eqn45} and the fact that $p^k\le H$, we get that
$$
\left( \frac{a_{1}}{a_{0}}, \dots , \frac{a_{d}}{a_{0}} \right)\in B\left( \bx, p^{(n+m\lambda)/d}H^{-v_{\min}+1} \right) \subseteq B(\bx,\varepsilon)\subseteq  \U,
$$
where the last inclusion follows from condition $(\beta)$ on $H_{0}$. Thus, $\by=\left( \frac{a_{1}}{a_{0}}, \dots , \frac{a_{d}}{a_{0}} \right) \in \U$ and, in particular,  $f_{j}\left(\frac{a_{1}}{a_{0}}, \dots , \frac{a_{d}}{a_{0}}\right)$ is well defined and \eqref{eqn007} is applicable to $f=f_j$ for each $1 \leq j \leq m$.

Using the fact that each $f_{j}$ is \DQE{} at $\bx$ we get that
\begin{align} \label{f_tau}
\left| f_{j} \left(\frac{a_{1}}{a_{0}}, \dots , \frac{a_{d}}{a_{0}} \right)-f_{j}(\bx)-\sum_{1 \leq i \leq d} \frac{\partial f_{j}}{\partial x_{i}}(\bx) \left(\frac{a_{i}}{a_{0}}-x_{i} \right) \right|_{p} & < C\max_{1 \leq i \leq d} \left| \frac{a_{i}}{a_{0}}-x_{i} \right|_{p}^{2}\\
& < (p^{-k}H)^{-\tau_{j}} \nonumber
\end{align}
for each $1 \leq j \leq m$, where the last inequality follows since
\begin{align*}
C\max_{1 \leq i \leq d} \left| \frac{a_{i}}{a_{0}}-x_{i} \right|_{p}^{2} & \;\stackrel{\eqref{eqn45}}{<}\; C p^{(2n+2m\lambda)/d} p^{2k}H^{-2v_{\min}}\\
& = C p^{(2n+2m\lambda)/d}p^{-2k(v_{\min}-1)} (p^{-k}H)^{-2v_{\min}}\\[1ex]
& \stackrel{(*)}{\le} (p^{-k}H)^{-\tau_{\max}} \le (p^{-k}H)^{-\tau_{j}}\,.
\end{align*}
Here $(*)$ follows from condition $(\alpha1)$ on $H_{0}$ if $p^k\le H_0^{1/2}$ and it follows from condition $(\alpha2)$ on $H_{0}$ if $p^k>H_0^{1/2}$, and we also use the facts that $v_{\min}>1$ and $2v_{\min}>\tau_{\max}$.

For each $1 \leq j \leq m$ in the second row of inequalities of \eqref{ineq2} we may divide through by $p^k=|b_{0}|^{-1}_{p}$ and $p^\lambda$, and combine with \eqref{f_tau} to obtain
\begin{equation*}
\left| f_{j} \left( \frac{a_{1}}{a_{0}}, \dots , \frac{a_{d}}{a_{0}} \right)-\frac{a_{d+j}}{a_{0}} \right|_{p}< (p^{-k}H)^{-\tau_{j}}
\end{equation*}
for each $1 \leq j \leq m$. This verifies the second set of inequalities in \eqref{tau_{2}}, while the first set of inequalities in \eqref{tau_{2}} follows from \eqref{eqn45}. The proof is thus complete.
\end{proof}

In order to use a Mass Transference Principle, namely Theorem~\ref{MTPRR_full_measure}, we now establish the following Corollary.

\begin{corollary} \label{coldiri}
Let $\bff$, $\btau$ and $\bv$ be as in Theorem \ref{diri}. Let $\bx \in \U \backslash \Q^{d}$ and $\lambda$ be given by \eqref{first_order_constant}. Then the following system
\begin{equation} \label{cas}
\begin{cases}
\left|x_{i} -\frac{a_{i}}{a_{0}}\right|_{p} < p^{(n+m\lambda)/d}h^{-v_{i}} & (1 \leq i \leq d), \\[2ex]
\left|f_{j}\left(\frac{a_{1}}{a_{0}}, \dots , \frac{a_{d}}{a_{0}} \right) - \frac{a_{d+j}}{a_{0}} \right|_{p} < h^{-\tau_{j}} \, \qquad& (1 \leq j \leq m)\,,
\end{cases}
\end{equation}
where $h=\underset{0 \leq i \leq n}{\max} |a_{i}|$, has infinitely many integer solutions $(a_{0}, \dots, a_{n}) \in \Z^{n+1}$ satisfying \eqref{a}.
\end{corollary}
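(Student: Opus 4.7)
The plan is to derive Corollary~\ref{coldiri} from Theorem~\ref{diri} by first translating the bounds obtained there into the form required in \eqref{cas}, and then arguing by contradiction that the resulting collection of integer tuples must be infinite.

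For the first step, given any integer $H > H_0$, Theorem~\ref{diri} (whose proof in fact produces a non-negative $k$) furnishes some $k = k(H) \ge 0$ and a tuple $(a_0, \dots, a_n) \in \Z^{n+1}$ satisfying \eqref{a} together with \eqref{tau_{2}}. I would set $h := \max_{0 \le i \le n} |a_i|$; the third inequality of \eqref{tau_{2}} gives $h \le p^{-k}H$, while $\gcd(a_0, \dots, a_n) = 1$ forces $h \ge 1$, so that $p^{k} \le H$. The $\tau_j$-bounds then pass directly from $(p^{-k}H)^{-\tau_j}$ to $h^{-\tau_j}$, because $\tau_j > 0$ and $h \le p^{-k}H$. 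For the $v_i$-bounds, rewriting $p^{k}H^{-v_i} = p^{k(1-v_i)}(p^{-k}H)^{-v_i}$ and using $v_i > 1$ together with $k \ge 0$ gives $p^{k(1-v_i)} \le 1$, hence $p^{k}H^{-v_i} \le h^{-v_i}$. Thus the tuple from Theorem~\ref{diri} satisfies \eqref{cas} with $h$ as above.

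Next, I would show that as $H$ ranges over integers larger than $H_0$, one cannot obtain only finitely many distinct tuples. Suppose for contradiction that the set $S$ of tuples $(a_0,\dots,a_n) \in \Z^{n+1}$ satisfying \eqref{cas} and \eqref{a} is finite. Since $\bx \in \U \setminus \Q^d$, pick an index $i_0 \in \{1, \dots, d\}$ with $x_{i_0} \notin \Q$; then $|x_{i_0} - a_{i_0}/a_0|_p > 0$ for every tuple in $S$, and so the minimum $\delta$ of these positive numbers over $S$ is itself positive. On the other hand, for each $H > H_0$ the tuple produced by Theorem~\ref{diri} lies in $S$ and, combining the first inequality of \eqref{tau_{2}} with the bound $p^{k(H)} \le H$ established above, satisfies
\[
|x_{i_0} - a_{i_0}/a_0|_p \;<\; p^{(n+m\lambda)/d}\, p^{k(H)}\, H^{-v_{i_0}} \;\le\; p^{(n+m\lambda)/d}\, H^{1-v_{i_0}}.
\]
Since $v_{i_0} > 1$, the right-hand side tends to $0$ as $H \to \infty$, contradicting $\delta > 0$ once $H$ is sufficiently large.

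The main technical point lies in the first step, namely checking that the height $h = \max|a_i|$ of the solution provided by Theorem~\ref{diri} is compatible with the precisions $h^{-v_i}$ and $h^{-\tau_j}$ demanded in \eqref{cas}; the crucial input here is the hypothesis $v_i > 1$, without which the manipulation $p^{k}H^{-v_i} \le h^{-v_i}$ can fail. Once that is done, producing infinitely many solutions is essentially formal: it requires only the irrationality of some coordinate of $\bx$ and the observation that the approximation on the right-hand side above decays to zero as $H \to \infty$.
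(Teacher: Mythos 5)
Your proposal is correct and follows essentially the same two-step route as the paper: pass from the Dirichlet-type bounds \eqref{tau_{2}} of Theorem~\ref{diri} to the form \eqref{cas} using $h\le p^{-k}H$, $k\ge0$ and $v_i>1$, then argue by contradiction via the irrationality of some coordinate of $\bx$ and the fact that $p^{(n+m\lambda)/d}H^{1-v_i}\to 0$ as $H\to\infty$. You simply spell out a little more explicitly (via $p^{k}H^{-v_i}=p^{k(1-v_i)}(p^{-k}H)^{-v_i}$ and the observation $p^{k}\le H$) the step that the paper dismisses in one line.
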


\begin{proof}
First, observe that \eqref{cas} is a consequence of \eqref{tau_{2}} since $h=\max_{0 \leq i \leq n} |a_{i}| \leq p^{-k}H$ and $v_i>1$ for all $i$. So we only need to verify that there are infinitely many different solutions $(a_{0}, \dots, a_{n}) $ to \eqref{tau_{2}} as $H$ varies. Suppose the contrary. Then, since $\bx \in \Zp^{d} \backslash \Q^{d}$, there is $1 \leq i \leq d$ such that $x_{i}-\frac{a_{i}}{a_{0}} \neq 0$ and so
\begin{equation} \label{delt}
\delta:=\min \left|x_{i}-\frac{a_{i}}{a_{0}}\right|_{p}>0
\end{equation}
where the minimum is taken amongst the solutions $(a_{0}, a_{1},\dots, a_{n})$ to \eqref{tau_{2}} over all $H\ge H_0$.
On the other hand, by \eqref{tau_{2}}, we have that $\delta< p^{(n+m\lambda)/d}p^{k}H^{-v_i}\le p^{(n+m\lambda)/d}H^{-v_i+1}\to0$ as $H\to\infty$ since $v_i>1$, giving a contradiction for large $H$.
\end{proof}

\begin{corollary} \label{coldiri+}
Let $\bff$, $\btau$ and $\bv$ be as in Theorem \ref{diri} and suppose that $\bff$ is \DQE{} for almost every $\bx\in\U$. Let $\delta>0$ be any constant. Then for almost every $\bx \in \U$ the following system
\begin{equation} \label{cas+}
\begin{cases}
\left|x_{i} -\frac{a_{i}}{a_{0}}\right|_{p} < \delta h^{-v_{i}} & (1 \leq i \leq d), \\[2ex]
\left|f_{j}\left(\frac{a_{1}}{a_{0}}, \dots , \frac{a_{d}}{a_{0}} \right) - \frac{a_{d+j}}{a_{0}} \right|_{p} < h^{-\tau_{j}} \, & (1 \leq j \leq m)\,,
\end{cases}
\end{equation}
where $h=\underset{0 \leq i \leq n}{\max} |a_{i}|$, has infinitely many integer solutions $(a_{0}, \dots, a_{n}) \in \Z^{n+1}$ satisfying \eqref{a}.
\end{corollary}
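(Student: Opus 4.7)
The plan is to decompose $\U$ into level sets on which the constant $p^{(n+m\lambda)/d}$ from Corollary~\ref{coldiri} is uniformly bounded, and then invoke Lemma~\ref{measure_unchanged_rectangles} to shrink this uniform constant down to the arbitrary $\delta$. Since $\Q^d$ is countable and $\bff$ is \DQE{} almost everywhere on $\U$, I would restrict to the full-measure subset $\U_0 = \{\bx \in \U \setminus \Q^d : \bff \text{ is \DQE{} at } \bx\}$, on which the integer $\lambda(\bx)$ defined through \eqref{first_order_constant} is well defined and non-negative. Setting $\U_{\le L} = \{\bx \in \U_0 : \lambda(\bx) \le L\}$, we have $\U_0 = \bigcup_{L \ge 0} \U_{\le L}$, so it suffices to prove, for each fixed $L$, that the set $W'_\delta$ of $\bx$ for which \eqref{cas+} has infinitely many solutions has full $\mu_{p,d}$-measure in $\U_{\le L}$. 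The case $\delta \ge p^{(n+mL)/d}$ is immediate from Corollary~\ref{coldiri}, so I may assume $\delta < p^{(n+mL)/d}$.

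For the main step I would enumerate, as a single sequence $\{\alpha_i\}_{i\in\N}$ in order of increasing height $h = \max_{0 \le i \le n}|a_i|$, all integer tuples $(a_0, \ldots, a_n) \in \Z^{n+1}$ satisfying \eqref{a} together with $|f_j(a_1/a_0, \ldots, a_d/a_0) - a_{d+j}/a_0|_p < h^{-\tau_j}$ for every $1 \le j \le m$. To each $\alpha_i$ I associate the $d$-dimensional rectangles
$$
B_i = \prod_{j=1}^d B\bigl(a_j/a_0,\; p^{(n+mL)/d}h^{-v_j}\bigr) \qquad\text{and}\qquad U_i = \prod_{j=1}^d B\bigl(a_j/a_0,\; \delta\, h^{-v_j}\bigr) \subseteq B_i.
$$
Corollary~\ref{coldiri} combined with the bound $\lambda(\bx) \le L$ gives $\U_{\le L} \subseteq \limsup_i B_i$, while by construction $W'_\delta = \limsup_i U_i$. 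Since the coordinate radii shrink by the fixed factor $\delta/p^{(n+mL)/d}$ when passing from $B_i$ to $U_i$, and the Haar measure of a $p$-adic ball is proportional to its radius up to a factor of $p$, the ratio $\mu_p(U_i^{(j)})/\mu_p(B_i^{(j)})$ is uniformly bounded below in $i$ for each coordinate $j$. Hence Lemma~\ref{measure_unchanged_rectangles} applies and yields $\mu_{p,d}(\limsup_i B_i) = \mu_{p,d}(\limsup_i U_i)$; together with $\limsup_i U_i \subseteq \limsup_i B_i$ this gives $\mu_{p,d}(\U_{\le L} \setminus W'_\delta) = 0$, as required.

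The principal obstacle is verifying this uniform lower bound on the measure ratios. It rests on the non-Archimedean fact that the ratio of measures of two concentric $p$-adic balls is controlled by the ratio of their radii up to a factor of $p$; this is precisely the non-Archimedean analogue of the constant-shrinking that Lemma~\ref{measure_unchanged_rectangles} (via Lemma~\ref{measure_unchanged}) was designed to exploit. A small preparatory point is the measurability of $\lambda$ on $\U_0$, which follows from \DQE{} guaranteeing that partial derivatives are measurable as limits of difference quotients.
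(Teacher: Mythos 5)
Your proof is correct, and it uses the same two key tools as the paper (Corollary~\ref{coldiri} to produce the $\bx$-dependent approximation and Lemma~\ref{measure_unchanged_rectangles} to shrink the constant to an arbitrary $\delta$), but it organizes the argument differently. The paper avoids stratifying $\U$ by $\lambda(\bx)$ altogether: it forms the increasing family of $\limsup$ sets $\limsup_{\ba} B_\ba(\btau;\delta)$ indexed by $\delta>0$, observes via Corollary~\ref{coldiri} that their union is a full-measure subset of $\U$, invokes continuity of measure to get $\lim_{\delta\to\infty}\mu_{p,d}(\limsup B_\ba(\btau;\delta))=\mu_{p,d}(\U)$, and then applies Lemma~\ref{measure_unchanged_rectangles} once to conclude that all these $\limsup$ sets have the \emph{same} measure, hence each equals $\mu_{p,d}(\U)$. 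Your approach instead slices $\U_0$ into the level sets $\U_{\le L}=\{\lambda(\bx)\le L\}$, and on each slice compares the balls with radius scale $p^{(n+mL)/d}$ against those with scale $\delta$ via the same Lemma. Both are sound; the paper's monotone-union argument is slightly slicker and, notably, sidesteps the measurability question for $\lambda$ that your version has to address (you correctly flag it, and it does hold since partial derivatives are limits of difference quotients under \DQE{}), while your level-set version is more explicit about precisely where the $\bx$-dependence of the implicit constant enters and how it is absorbed.
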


\begin{proof}
Define the set of integer points
\begin{equation}\label{eqn047}
S_{\btau}=\left\{(a_{0},\dots, a_{n}) \in \Z^{n+1}:\, \begin{array}{l}
\text{\eqref{a} holds and for all }1 \leq j \leq m\\[1ex]
\left| f_{j}\left(\frac{a_{1}}{a_{0}}, \dots , \frac{a_{d}}{a_{0}} \right) -\frac{a_{d+j}}{a_{0}}\right|_{p}<h^{-\tau_{d+j}}, \\[1.5ex]
\text{where }\underset{0 \leq i \leq n}{\max} |a_{i}| = h
\end{array}
\right\},
\end{equation}
and for each $\ba\in S_{\btau}$ and $\delta>0$ consider the hyperrectangles
\begin{equation}\label{eqn048}
B_{\ba}(\btau;\delta)=\left\{ \bx \in \Zp^{d}: \left|x_{i}-\frac{a_{i}}{a_{0}}\right|_{p}<\delta h^{-\tau_{i}} \qquad (1 \leq i \leq d ) \right\}.
\end{equation}
By Corollary~\ref{coldiri},
\begin{equation}\label{eqn050}
\bigcup_{\delta>0}\underset{\ba \in S_{\btau}}\limsup B_{\ba}(\btau;\delta)=\U\setminus\Q_p^d
\end{equation}
and therefore this union has full measure in $\U$, since the sequence of sets in \eqref{eqn050} is increasing as $\delta$ increases. These are Borel sets and therefore measurable. Hence, by the continuity of measure, we have that
\begin{equation}\label{eqn051}
\lim_{\delta\to+\infty}\mu_{p, n}\left(\underset{\ba \in S_{\btau}}\limsup B_{\ba}(\btau;\delta)\right)=
\mu_{p, n}\left(\bigcup_{\delta>0}\underset{\ba \in S_{\btau}}\limsup B_{\ba}(\btau;\delta)\right)=
\mu_{p, n}(\U)\,.
\end{equation}
By Lemma~\ref{measure_unchanged_rectangles}, every limsup set in \eqref{eqn051} is of the same measure. Hence,
$$
\mu_{p, n}\left(\underset{\ba \in S_{\btau}}\limsup B_{\ba}(\btau;\delta)\right)=
\mu_{p, n}(\U)
$$
for every $\delta>0$. This is exactly what we had to prove.
\end{proof}

\section{Proof of Theorems \ref{lowerbnd}--\ref{lowerbnd_manifold}}

We begin with the following proposition that lays the basis for applying the Mass Transference Principles.

\begin{proposition}\label{prop0}
Let $\bff: \U \to \Zp^{m}$, where $\U\subseteq \Zp^{d}$ is an open subset, and for $\bx\in\U$ let $\ff(\bx)=(\bx,\bff(\bx))$. Let $\U^*$ be the subset of $\bx\in\U$ such that $\bff$ is \DQE{} at $\bx$.
Let  $\btau=(\tau_{1},\dots, \tau_{n})\in\R^n_+$. Let $S_{\btau}$ and $B_{\ba}(\btau;\delta)$ be defined by \eqref{eqn047} and \eqref{eqn048} respectively. Then for any $0<\delta\le 1$
\begin{equation}\label{eqn47}
\U^*\cap\underset{\ba \in S_{\btau}}\limsup B_{\ba}(\btau;\delta)\subset \ff^{-1}(\Wp_n(\btau))
\end{equation}
provided that
\begin{equation}\label{eqn48}
\min_{1 \leq i \leq d} \tau_{i} > \max_{1 \leq j \leq m}\tau_{d+j}\,.
\end{equation}
If
\begin{equation}\label{eqn49}
\min_{1 \leq i \leq d} \tau_{i} = \max_{1 \leq j \leq m}\tau_{d+j}\,.
\end{equation}
and $\bff$ is a Lipschitz map with the Lipschitz constant $L$, then \eqref{eqn47} holds for any $0<\delta\le \min\{1,L^{-1}\}$.
\end{proposition}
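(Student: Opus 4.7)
The plan is, starting from $\bx$ in the intersection on the left, to use the DQE property at $\bx$ to convert the approximation of $\bx$ provided by $B_\ba(\btau;\delta)$ into an approximation of $\bff(\bx)$ that matches, via the $S_\btau$ condition, the approximation of $\bff(a_1/a_0,\dots,a_d/a_0)$; the combined rational tuple then witnesses $\ff(\bx)\in\Wp_n(\btau)$.

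First I would fix $\bx\in\U^*\cap\limsup B_\ba(\btau;\delta)$, fix the DQE constants $C,\varepsilon>0$ valid for all $f_j$ simultaneously at $\bx$, and set $p^\lambda:=\max\{1,\max_{i,j}|\partial_if_j(\bx)|_p\}$. For each $\ba=(a_0,\dots,a_n)\in S_\btau$ with $\bx\in B_\ba(\btau;\delta)$ and $h=\max_{0\le i\le n}|a_i|$ sufficiently large, I set $\by:=(a_1/a_0,\dots,a_d/a_0)$. The bound $|y_i-x_i|_p<\delta h^{-\tau_i}$ places $\by\in B(\bx,\varepsilon)\subseteq\U$, so DQE at $\bx$ combined with the ultrametric inequality gives
\[
|f_j(\by)-f_j(\bx)|_p \;\le\; \max\bigl\{p^\lambda\delta\, h^{-\min_{i\le d}\tau_i},\ C\delta^2\, h^{-2\min_{i\le d}\tau_i}\bigr\}.
\]
The strict separation $\min_{i\le d}\tau_i>\max_j\tau_{d+j}$ in \eqref{eqn48} forces both terms to be $<h^{-\tau_{d+j}}$ for all sufficiently large $h$, and combining with $|f_j(\by)-a_{d+j}/a_0|_p<h^{-\tau_{d+j}}$ from the definition of $S_\btau$ yields $|f_j(\bx)-a_{d+j}/a_0|_p<h^{-\tau_{d+j}}$. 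Together with $|x_i-a_i/a_0|_p<h^{-\tau_i}$ for $1\le i\le d$ (using $\delta\le 1$), this gives $|\ff(\bx)_i-a_i/a_0|_p<h^{-\tau_i}$ for every $1\le i\le n$, with $|a_i|\le h$ for all $i$ and $h\ge a_0$.

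At this stage I will repackage the height-$h$ approximation as a witness in $\Ap_{a_0^*}(\btau)$ for an appropriate integer $a_0^*$, exploiting the coprimality conditions $(a_0,p)=1$ and $\gcd(a_0,\dots,a_n)=1$ from \eqref{a} together with the ultrametric structure of $\Q_p$. As $\ba$ ranges over the limsup, infinitely many such witnesses arise with $a_0^*\to\infty$, which gives $\ff(\bx)\in\Wp_n(\btau)$.

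For the Lipschitz addendum in the boundary case $\min_{i\le d}\tau_i=\max_j\tau_{d+j}$ of \eqref{eqn49}, the linear term $p^\lambda\delta h^{-\min_i\tau_i}$ in the DQE estimate is no longer automatically $<h^{-\tau_{d+j}}$ for large $h$; I would replace it by the Lipschitz estimate $|f_j(\bx)-f_j(\by)|_p\le L|\bx-\by|_p<L\delta h^{-\min_i\tau_i}\le h^{-\min_i\tau_i}=h^{-\tau_{d+j}}$, which holds strictly whenever $\delta\le L^{-1}$, and run the rest of the argument unchanged. The hard part will be the third paragraph: faithfully converting the natural height-$h$ approximation into a tuple in $\Ap_{a_0^*}(\btau)$ satisfying the defining constraint $|a_i^*|\le a_0^*$, a bookkeeping step that ultimately hinges on the coprimality conditions in \eqref{a} and on the ultrametric structure of $\Q_p$.
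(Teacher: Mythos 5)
Your proof mirrors the paper's own argument in every essential step. DQE at $\bx$ together with the ultrametric inequality gives
\[
|f_j(\bx)-f_j(\by)|_p \;\le\; \max\Bigl\{p^\lambda\delta\, h^{-\tau_{\min}},\ C\delta^2\, h^{-2\tau_{\min}}\Bigr\},
\qquad \tau_{\min}:=\min_{1\le i\le d}\tau_i,
\]
exactly as you write; the strict separation \eqref{eqn48} (or, in the boundary case \eqref{eqn49}, the Lipschitz bound $L\delta h^{-\tau_{\min}}\le h^{-\tau_{d+j}}$ valid for $\delta\le L^{-1}$) pushes this below $h^{-\tau_{d+j}}$ for all sufficiently large $h$; and combining with the defining inequality of $S_\btau$ via the ultrametric yields $|f_j(\bx)-a_{d+j}/a_0|_p<h^{-\tau_{d+j}}$. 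This is precisely the paper's chain of inequalities. The one step you set aside as the ``hard part'' — repackaging the height-$h$ approximation into a witness for $\Wp_n(\btau)$ — is in the paper just the final sentence: having derived $|a_0x_i-a_i|_p<h^{-\tau_i}$ $(1\le i\le d)$, $|a_0f_j(\bx)-a_{d+j}|_p<h^{-\tau_{d+j}}$ $(1\le j\le m)$ and $\max_{0\le i\le n}|a_i|=h$ for infinitely many $\ba$, the paper immediately concludes $\ff(\bx)\in\Wp_n(\btau)$, using only that $|a_0|_p=1$ (from $(a_0,p)=1$ in \eqref{a}) and $h\ge a_0$, so $h^{-\tau_i}\le a_0^{-\tau_i}$; no further coprimality bookkeeping of the sort you anticipate is carried out. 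So your plan already reproduces the paper's proof up to this last one-liner, which you should simply write out rather than leave deferred; note in particular that the constraint $|a_i|\le a_0$ appearing in the definition \eqref{Ap} of $\Ap_{a_0}$ is not explicitly revisited in the paper's conclusion either, so there is no hidden machinery for you to recover there.
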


\begin{proof}
Suppose $\bx\in \U^*\cap B_{\ba}(\btau;\delta)$. Then
\begin{align*}
\left|f_{j}(\bx)-f_{j}\left(\frac{a_{1}}{a_{0}}, \dots , \frac{a_{d}}{a_{0}} \right)\right|_{p} & <
\max\left\{\max_{1\le i\le d}\left|\frac{\partial f_j(\bx)}{\partial x_i}\right|_p\max_{1 \leq i \leq d}\left|x_{i}-\frac{a_{i}}{a_{0}} \right|_{p},C\max_{1 \leq i \leq d}\left|x_{i}-\frac{a_{i}}{a_{0}} \right|_{p}^2 \right\}\\[2ex]
 & <
\max\left\{\max_{1\le i\le d}\left|\frac{\partial f_j(\bx)}{\partial x_i}\right|_p \delta h^{-\tau_{\min}},C\delta^2 h^{-2\tau_{\min}} \right\}
 < h^{-\tau_{d+j}}
\end{align*}
for any $1 \leq j \leq m$ and all sufficiently large $h$ if \eqref{eqn48} holds. In turn, if \eqref{eqn49} holds, we use the fact that $\bff$ is Lipschitz:

\begin{align*}
\left|f_{j}(\bx)-f_{j}\left(\frac{a_{1}}{a_{0}}, \dots , \frac{a_{d}}{a_{0}} \right)\right|_{p} & <
 L\max_{1 \leq i \leq d}\left|x_{i}-\frac{a_{i}}{a_{0}} \right|_{p} < L\delta h^{-\tau_{\min}}  \le h^{-\tau_{d+j}}
\end{align*}
for any $1 \leq j \leq m$ and all sufficiently large $h$ since $0<\delta\le L^{-1}$.
In either case, if $\ba\in S_{\btau}$, then
$$
\left|f_{j}(\bx)-\frac{a_{d+j}}{a_0}\right|_{p}\le \max\left\{\left|f_{j}(\bx)-f_{j}\left(\frac{a_{1}}{a_{0}}, \dots , \frac{a_{d}}{a_{0}} \right)\right|_{p},
\left|\frac{a_{d+j}}{a_0}-f_{j}\left(\frac{a_{1}}{a_{0}}, \dots , \frac{a_{d}}{a_{0}} \right)\right|_{p}
\right\}  <
 h^{-\tau_{d+j}}
$$
provided that $h$ is sufficiently large. Hence, assuming that
$\bx\in \U^*\cap \underset{\ba \in S_{\btau}}\limsup B_{\ba}(\btau;\delta)$ we conclude that the system of inequalities
\begin{align}\label{wt1}
\begin{cases}
|a_{0}x_{i}-a_{i}|_{p}<\delta h^{-\tau_{i}}\le h^{-\tau_{i}}, \quad& (1 \leq i \leq d), \\
|a_{0}f_{j}(\bx)-a_{d+j}|_{p}< h^{-\tau_{d+j}} & (1 \leq j \leq m), \\
\max \{|a_{0}|,\dots,|a_{n}| \} =h
\end{cases}
\end{align}
holds for infinitely many $\ba\in\Z^{n+1}$. Therefore,
$\bx\in \ff^{-1}(\Wp_n(\btau))$ and the proof is complete.
\end{proof}

\begin{proof}[Proof of Theorems \ref{lowerbnd}--\ref{lowerbnd1}]
First of all, note that \eqref{vb1} and \eqref{eqn:lwrbnd} follow from Theorem~\ref{lowerbnd_manifold}.
Thus we only need to verify the measure part of these theorems, that is \eqref{vb1+} and \eqref{vb1+3}. Consequently, we will assume that $\bff$ is Lipschitz on $\U$. Let $0<\delta\le \min\{1,L^{-1}\}$, where $L$ is the Lipschitz constant of $\bff$.
With reference to the General Mass Transference Principle (Theorem~\ref{general_MTP}),
take the function $g(x)=x^{d}$ as our dimension function. Note that $g$ is doubling and that $\ha^{g}\asymp \mu_{p,d}$. For any ball $B=B(x,r)$ and dimension function $f(x)=x^{s}$, define $B^{s}= B(x,g^{-1}(x^{s}))$. Note that in Theorems~\ref{lowerbnd} and \ref{lowerbnd1} we have that $\tau_{1}=\tau_{2}= \dots=\tau_{d}$. Therefore
the sets $B_{\ba}(\btau;\delta)$ defined by \eqref{eqn048} are balls. Let the vector $\bv=(v_{1}, \dots , v_{d})$ be of the form $\bv=(v, \dots , v)$ where
\begin{equation*}
v= \frac{n+1-\sum_{i=1}^{m}\tau_{d+i}}{d}.
\end{equation*}
Note that this $\bv$ satisfies the requirements of Theorem \ref{diri} and its corollaries. Let
\begin{equation*}
s=\frac{n+1-\sum_{i=1}^{m}\tau_{d+i}}{\tau_{d}},
\end{equation*}
Then
\begin{equation*}
B^{s}_{\ba}(\tau_{d};\delta)= \left\{ \bx \in \Zp^{d} : \max_{1 \leq i \leq d}\left|x_{i}-\frac{a_{i}}{a_{0}}\right|_{p}<\delta^{s/d}h^{-v} \right\},
\end{equation*}
and, by Corollary \ref{coldiri+},
\begin{equation*}
 \mu_{p,d}\left(\underset{\ba \in S_{\btau}}{\limsup}B^{s}_{\ba}(\tau_{d};\delta) \right)=\mu_{p,d}(\U).
 \end{equation*}
  Hence, for any ball $B \subset \U$,
\begin{equation*}
\ha^{g}\left( B \cap \limsup_{\ba \in S_{\btau}}B^{s}_{\ba}(\tau_{d};\delta) \right) = \ha^{g}\left(B \right).
\end{equation*}
By the Mass Transference Principle (Theorem~\ref{general_MTP}), we have that for any ball $B \subseteq \U$,
\begin{equation} \label{hausdorff_result}
\ha^{s}\left( B \cap \limsup_{\ba \in S_{\btau}}B^{g}_{\ba}(\tau_{d};\delta) \right) = \ha^{s}\left(B \right).
\end{equation}
By Proposition~\ref{prop0} and the choice of $\delta$, we have that \eqref{eqn47} holds, where $\U^*=\U$.
Combining \eqref{hausdorff_result} and \eqref{eqn47} gives the required Hausdorff measure results and completes the proof.
\end{proof}

\vspace*{1ex}

\begin{proof}[Proof of Theorem \ref{lowerbnd_manifold}]
First of all, without loss of generality we can assume throughout this proof that \eqref{eqn48} holds. Otherwise we could consider $\btau'=(\tau_1+\varepsilon,\dots,\tau_d+\varepsilon,\tau_{d+1},\dots,\tau_{n})$ for a suitably small $\varepsilon>0$ and note that $\ff^{-1}\left(\Wp_n(\btau')\right)\subset \ff^{-1}\left(\Wp_n(\btau)\right)$. Hence, the validity of \eqref{vb1+4} for $\btau'$ would give us the bound
$$
\dim \left(\ff^{-1}\left(\Wp_n(\btau)\right)\right) \ge \dim \left(\ff^{-1}\left(\Wp_n(\btau')\right)\right) \geq \min_{1 \leq i \leq d}\left\{ \frac{n+1+\sum_{\tau_{j} < \tau_{i}}(\tau_{i}-\tau_{j})}{\tau_{i}+\varepsilon}-m \right\}
$$
and on letting $\varepsilon\to0$ we would get the required result for $\btau$.

Now, since \eqref{eqn48} holds, by Proposition~\ref{prop0} with $\delta=1$, get that
\begin{equation}\label{eqn47++}
\underset{\ba \in S_{\btau}}\limsup B_{\ba}(\btau;1)\subset \ff^{-1}(\Wp_n(\btau))
\end{equation}

Corollary~\ref{coldiri+} provides us with a full measure statement, which will be the basis for applying the Mass Transference Principle from rectangles to rectangles without Ubiquity (Theorem \ref{MTPRR_full_measure}). With reference to the notation used in Theorem \ref{MTPRR_full_measure} take
\begin{equation*}
\begin{array}{ll}
J=S_{\btau}, & \rho(q)=q^{-1},\\[1ex]
R_{\alpha}=\left\{ \left(\frac{a_{1}}{a_{0}}, \dots , \frac{a_{n}}{a_{0}}\right)\right\},\quad & \beta_{\alpha}=a_0\qquad\text{for }\alpha=(a_0,\dots,a_n) \in S_{\btau}
\end{array}
\end{equation*}
and so
\begin{equation}\label{eqn64}
\limsup_{\ba \in S_{\btau}}B_{\ba}(\bv;1)= \limsup_{\alpha \in J} \Delta(R_{\alpha},\rho(\beta_{\alpha})^{-\bv}).
\end{equation}
By Corollary \ref{coldiri+} and \eqref{eqn64}, we have that
\begin{equation}\label{eqn:66}
\mu_{p,d}\left(\limsup_{\alpha \in J} \Delta(R_{\alpha},\rho(\beta_{\alpha})^{-\bv}) \right)=\mu_{p,d}(\U)
\end{equation}
for any $\bv=(v_{1}, \dots , v_{d}) \in \R^{d}_{+}$ satisfying
\begin{equation} \label{bv_conditions}
v_{i}>1, \quad \sum_{i=1}^{d}v_{i}=n+1-\sum_{j=1}^{m}\tau_{j}.
\end{equation}
Without loss of generality we will assume that $\tau_{1} > \tau_{2} > \dots > \tau_{d}$. Similarly to what proceeds the proof of Proposition \ref{ubiquity_rectangle} define each $v_{i}$ recursively, starting with $r=0$, by
\begin{equation*}
v_{d-r}=\min\left\{ \tau_{d-r}, \frac{n+1-\sum_{j=1}^{m}\tau_{d+j}-\sum_{i=d-r+1}^{d}v_{i}}{d-i} \right\}.
\end{equation*}
Observe that this choice of $\bv$ satisfies \eqref{bv_conditions}. Furthermore, there exists a $1 \leq b \leq d$ such that
\begin{equation*}
v_{c}=\frac{n+1-\sum_{j=1}^{m}\tau_{d+j}-\sum_{i=d-b}^{d}v_{i}}{d-b}
\end{equation*}
for all $1 \leq c \leq d-b$. Define $t_1,\dots,t_d$ from the equations
\begin{equation*}
\tau_{j}=v_{j}+t_{j}
\end{equation*}
then note that $\bt=(t_{1}, \dots , t_{d}) \in \R^{d}_{\geq0}$ and thus satisfies the conditions of Theorem \ref{MTPRR_full_measure}. Thus, the set $W(\bt)$, defined in Theorem~\ref{MTPRR_full_measure}, is exactly the right hand side of \eqref{eqn47++}. Hence, by \eqref{eqn47++}, we get that
$$
\dim \ff^{-1}(\Wp_n(\btau))\ge \dim W(\bt)\,.
$$
Also, in view of \eqref{eqn:66}, Theorem~\ref{MTPRR_full_measure} is applicable and so $\dim \ff^{-1}(\Wp_n(\btau))\ge s$, where $s$ is the same as in Theorem~\ref{MTPRR_full_measure}. The proof is now split into the following three cases.

\begin{enumerate}[i)]
\item $A_{i} \in \{ v_{1}, \dots v_{d-b} \}$: For these values of $A_{i}$, which are defined in Theorem~\ref{MTPRR_full_measure}, we have that
\begin{equation*}
\begin{array}{ccc}
K_{1}=\{1, \dots , d-b \}, & K_{2}=\{ d-b+1, \dots , d\}, & K_{3}=\emptyset.
\end{array}
\end{equation*}
Applying Theorem \ref{MTPRR_full_measure} gives
\begin{align*}
\dim \ff^{-1}(\Wp_n(\btau))\ge \dim W(\bt) & \geq \min_{1 \leq i \leq d-b} \left\{ \frac{(d-b)v_{i}+(d-(d-b+1)+1)v_{i} -\sum_{j=d-b}^{n}t_{j}}{v_{i}} \right\}, \\
&=\min_{1 \leq i \leq d-b} \left\{ d-\frac{\sum_{j=d-b+1}^{d}t_{j}}{v_{i}} \right\}. \\
\end{align*}
Since $t_{i}=0$ for $d-b+1 \leq i \leq d$ we have that $\dim \ff^{-1}(\Wp_n(\btau))\geq d$, which is the maximal possible dimension for $\ff^{-1}(\Wp_n(\btau))$.
\item $A_{i} \in \{ v_{d-b+1}, \dots , v_{d} \}$: For such values of $A_{i}$ observe that
\begin{equation*}
\begin{array}{ccc}
K_{1}=\{1, \dots , i \}, & K_{2}=\{i+1, \dots , d\}, & K_{3}=\emptyset.
\end{array}
\end{equation*}
Then in this case we have that
\begin{equation*}
\dim \ff^{-1}(\Wp_n(\btau))\ge \dim W(\bt) \geq \min_{d-b+1 \leq i \leq d}\left\{ \frac{iv_{i} + (d-i)v_{i} - \sum_{j=i+1}^{d} t_{j}}{v_{i}}  \right\}.
\end{equation*}
Similarly to the previous case, since $t_{j}=0$ for $d-b+1 \leq j \leq d$ the r.h.s of the above equation is $d$.
\item $A_{i} \in \{\tau_{1}, \dots , \tau_{d}\}$: Since $\tau_{i}=v_{i}$ for $d-b+1 \leq i \leq d$, ii) covers such result. So we only need to consider the set of $A_{i} \in \{ \tau_{1}, \dots \tau_{d-b} \}$. If $A_{i}$ is contained in such set, then
\begin{equation*}
\begin{array}{ccc}
K_{1}=\emptyset, & K_{2}=\{ i, \dots , d\}, & K_{3}=\{ 1, \dots , i-1\}.
\end{array}
\end{equation*}
Thus, by Theorem \ref{MTPRR_full_measure}, we have that
\begin{align*}
& \dim \ff^{-1}(\Wp_n(\btau))\geq \min_{1 \leq i \leq d} \left\{ \frac{(d-i+1)\tau_{i}+\sum_{j=1}^{i-1}v_{j}-\sum_{j=i}^{d}t_{j}}{\tau_{i}} \right\}, \\
&= \min_{1 \leq i \leq d} \left\{ \frac{(d-i+1)\tau_{i}+(i-1)\left( \frac{n+1-\sum_{j=1}^{m}\tau_{d+j}-\sum_{j=d-b+1}^{d}v_{j}}{d-b} \right)-\sum_{j=i}^{d-b}(\tau_{j}-v_{j})-\sum_{j=d-b+1}^{d}t_{j}}{\tau_{i}} \right\}, \\
& = \min_{1 \leq i \leq d}\left\{ \frac{(d-i+1)\tau_{i}+(d-b)\left( \frac{n+1-\sum_{j=1}^{m}\tau_{d+j}-\sum_{j=d-b+1}^{d}v_{j}}{d-b} \right)-\sum_{j=i}^{d-b}\tau_{j}-\sum_{j=d-b+1}^{d}t_{j}}{\tau_{i}} \right\}, \\
&=\min_{1 \leq i \leq d} \left\{ \frac{n+1+\sum_{j=i}^{d}(\tau_{i}-\tau_{j})-\sum_{j=1}^{m}\tau_{d+j}}{\tau_{i}} \right\}, \\
&=\min_{1 \leq i \leq d}\left\{ \frac{n+1+\sum_{j=i}^{n}(\tau_{i}-\tau_{j})}{\tau_{i}}-m \right\}.
\end{align*}
\end{enumerate}
Considering all cases we have that
\begin{equation*}
\dim \ff^{-1}(\Wp(\btau) )\geq \dim W(\bt) \geq \min_{1 \leq i \leq d}\left\{ \frac{n+1+\sum_{j=i}^{n}(\tau_{i}-\tau_{j})}{\tau_{i}}-m \right\}
\end{equation*}
as required.
\end{proof}

\end{document}